\newtheorem{theorem}{Theorem} 
\newtheorem{lemma}[theorem]{Lemma}
\newtheorem{claim}[theorem]{Claim}
\newtheorem{proposition}[theorem]{Proposition}  
\newtheorem*{problem}{Problem}                                           
\newtheorem*{conjecture}{Conjecture}                
\theoremstyle{definition}
\newtheorem{definition}[theorem]{Definition}
\theoremstyle{remark}
\newtheorem{remark}[theorem]{Remark}
\numberwithin{equation}{section}
\numberwithin{theorem}{section}   
\newcommand{\beq}{\begin{equation}}
\newcommand{\eeq}{\end{equation}}
\newcommand{\ol}[1]{\overline{#1}}                                            
 \newcommand{\mc}[1]{\mathcal{#1}}                                             
\newcommand{\bN}{\mathbb{N}}   
\newcommand{\bR}{\mathbb{R}} 
\newcommand{\bC}{\mathbb{C}} 
\newcommand{\calI}{\mathcal{I}}
\newcommand{\calJ}{\mathcal{J}}
\newcommand{\calK}{\mathcal{K}}
\newcommand{\calL}{\mathcal{L}}
\newcommand{\calM}{\mathcal{M}}
\newcommand{\calS}{\mathcal{S}}
\newcommand{\ca}{{\overline{a}}}
\newcommand{\cb}{{\overline{b}}}
\newcommand{\cc}{{\overline{c}}}
\newcommand{\cd}{{\overline{d}}}
\newcommand{\ce}{{\overline{e}}}
\newcommand{\cs}{{\overline{s}}}
\newcommand{\ct}{{\overline{t}}}
\renewcommand{\Im}{\operatorname{Im}}
\newcommand{\Ric}{\operatorname{Ric}}
\newcommand{\contr}{\operatorname{contr}}
\newcommand{\pcontr}{\operatorname{pcontr}}
\newcommand{\Td}{\operatorname{Td}}
\newcommand{\Local}{\operatorname{Local}}
\newcommand{\ch}{\mathit{ch}}
\newcommand{\Ch}{\mathit{Ch}}
\newcommand{\Tch}{\mathit{Tch}}
\newcommand{\sfR}{\mathsf{R}}
\newcommand{\pa}{\partial}
\newcommand{\wt}{\widetilde}
\newcommand{\wh}{\widehat}
\newcommand{\La}{\Lambda}
\newcommand{\gw}{w_{G}}
\newcommand{\ii}{\sqrt{-1}}
\newcommand{\Special}{\Psi^1_{\ca}\Psi^\sigma_a}
\newcommand{\Specialonek}{\Psi^1_\ca\Psi^k_a}
\newcommand{\Specialks}{\Psi^k_{\ca}\Psi^\sigma_a}
\newcommand{\erase}{\,\backslash\!\!\!\!}
\newcommand\YDbox{
\setlength{\unitlength}{0.15mm}
\begin{picture}(21,21)(1,2)
\put(0,0){\line(0,1){20}}
\put(10,0){\line(0,1){20}}
\put(20,0){\line(0,1){20}}
\put(0,0){\line(1,0){20}}
\put(0,10){\line(1,0){20}}
\put(0,20){\line(1,0){20}}
\end{picture}}
\begin{document}
\title[Integral K\"ahler Invariants]%
{Integral K\"ahler Invariants and \\
the Bergman kernel asymptotics for  line bundles}

\author{Spyros Alexakis}
\address{Department of Mathematics\\ 
University of Toronto\\
40 St George Street Rm 6290
\\ Toronto ON M5S 2E4\\ Canada}
\email{alexakis@math.utoronto.ca}

\author{Kengo Hirachi}
\address{
Graduate School of Mathematical Sciences\\ 
The University of Tokyo\\
3-8-1 Komaba, Megro, Tokyo 153-8914 \\ Japan}
\email{hirachi@ms.u-tokyo.ac.jp}

\begin{abstract}
On a compact K\"ahler manifold, one can define global invariants by integrating local invariants of the metric.  Assume that a global invariant thus obtained depends only on the K\"ahler class. Then we show that the integrand can be decomposed into a Chern polynomial (the integrand of a Chern number) and divergences of one forms, which do not contribute to the integral. We apply this decomposition formula to describe the asymptotic  expansion of the Bergman kernel for positive line bundles and to show that the CR $Q$-curvature on a Sasakian manifold is a divergence.
\end{abstract}

\subjclass[2010]{53B35 (primary),
53C55, 32Q15, 32A25 (secondary)}  


\maketitle
\tableofcontents

\section{Introduction} 
\label{sec:intro}
\subsection{Statement of the main theorem}
A basic problem in differential geometry is to find relations between the local invariants of a geometric structure and the global ones.   Many  important examples of such correspondences arise from the asymptotic analysis of kernel functions:

\smallskip\noindent
(1) 
On Riemannian manifolds, the coefficients of the heat kernel asymptotic expansion give local invariants of the metric, whose integrals are spectral invariants of the Laplacian; see \cite{Gi}. If one uses the Yamabe Laplacian, then one gets a global conformal invariant by the integral, called the conformal index; see \cite{PR}, \cite{BO}.

\smallskip

\noindent
(2)
On polarized K\"ahler manifolds $(X,\calL)$, the Bergman kernel $B_m$ for the sections of $\calL^m$ has an asymptotic expansion as $m\to\infty$, which is know as the Tian-Yau-Zelditch expansion; see \S\ref{asymB} below.  The coefficients of this are local invariants of the metric that integrate to the Chern numbers; the expansion  can bee seen as a local version of the Hirzebruch-Riemann-Roch theorem. 
\smallskip

In these expansions, the explicit computation of the local invariants is not easy; see \cite{En},  \cite{HKN2}, \cite{Lo} and \cite{Lu} for the case (2).  While Weyl's invariant theory for the structure group can be used to simplify the task, it does not reveal the information contained in the integrals of the local invariants.  Thus a natural question is how far we can determine a local invariant from the fact that its integral is a global invariant; especially when the integral depends only on a class of metrics, e.g., a conformal class or K\"ahler class.  A result in the conformal case has been obtained by the first author; see \S\ref{conformal-intro} below.  In the case of K\"ahler geometry, the problem can be formulated as follows:

\begin{problem}
Let $P(g)$ be a scalar-valued local invariant of a K\"ahler metric. Suppose that the integral 
\beq\label{int-eq0}
\int_{X}P(g)dV_g
\eeq
depends only on the K\"ahler class of $g$ for any compact K\"ahler manifold $(X,g)$ of dimension $n$.  Identify $P(g)$ modulo the divergence of a one-form valued local K\"ahler invariant.
\end{problem}

Let us explain the terminology used here; see \S\ref{sec:results} for a more detailed explanation.  A (scalar-valued) local K\"ahler invariant is a polynomial expression $P(g)$ in the metric $g_{a\cb}$, its coordinate derivatives and $(\det g_{a\cb})^{-1}$
which remains invariant under holomorphic changes of coordinates. It is  known that such an invariant polynomial  can be written (non-uniquely) as a linear combination of Weyl invariants, that is, complete contractions of the form
\beq
\label{contraction1}
\contr\big(\nabla^{(p_1,q_1)}R\otimes\cdots\otimes\nabla^{(p_\sigma,q_\sigma)}R\big),
\eeq
where $R$ is the curvature tensor of the metric and $\nabla^{(p,q)}R$ is its iterated covariant derivative of type $(p,q)$.  Note that we can regard the complete contractions as formal expressions; thus a linear combination of \eqref{contraction1} gives local K\"ahler invariants in all dimensions.

A one form-valued local K\"ahler invariant is a linear combination of partial contractions of $\nabla^{(p_1,q_1)}R\otimes\cdots\otimes\nabla^{(p_\sigma, q_\sigma)}R$ that leaves one (holomorphic or anti-holomorphic) index free. Using abstract index notation, we denote such invariants by $T_a(g)$ or  $T_\ca(g)$.  By  Stokes' theorem,  the divergence of these one forms 
$\nabla_\ca T_a(g)$ and $\nabla_a T_\ca (g)$ integrates to zero on compact manifolds (the pair of holomorphic and antiholomorphic indices are assumed to be contracted by the metric).  Thus $\nabla_\ca T_a(g)$ and $\nabla_a T_\ca (g)$ are
trivial examples of $P(g)$ satisfying \eqref{int-eq0}.

Nontrivial examples of $P(g)$ satisfying \eqref{int-eq0} are given by Chern--Weil homomorphisms.  For each $\operatorname{Ad}_{U(n)}$-invariant polynomial on the Lie algebra of $U(n)$, we obtain a local K\"ahler invariant $\Ch(g)$, which has a formal expression independent of the dimension $n$,  that integrates to a Chern number of the 
holomorphic tangent bundle $T^{1,0}X$.   We call such an invariant polynomial of the curvature a {\em Chern polynomial}.  Note that a Chern polynomial of homogenous degree $\sigma$ vanishes when evaluated on a manifold of dimension $n<\sigma$.

To state our main theorem, we need one more definition. For a complete contraction of the from \eqref{contraction1}, we define its {\em geometric weight} by
$$
\gw=p_1+\cdots+p_\sigma+\sigma.
$$
We say $P(g)$ has geometric weight $\gw$ if each of the terms in the linear combination has geometric weight $\gw$.  (The common definition of the weight would be $-2\gw$, but we prefer to make the  geometric weight positive.)  This definition is independent of the expression of $P(g)$ as a linear combination of complete contractions. In fact, $P(g)$ has geometric weight $\gw$ if and only if $P(\lambda g)=\lambda^{-\gw}P(g)$ holds for any $\lambda>0$.  In particular, if a complete contraction does not contain covariant derivatives, which is true for the Chern polynomials, the geometric weight agrees with the degree  $\sigma$.

\medskip
\noindent
{\bf Main Theorem.}{\em\  Let $P(g)$ be a local K\"ahler invariant of geometric weight $\gw$ satisfying the assumption of the problem in a dimension $n\ge \gw-1$. Then there exist a Chern polynomial $\Ch(g)$ and  one-form valued local K\"ahler invariants
$T_{a}(g)$ and $T_{\ca}(g)$ such that}
\beq\label{main-thm-eq}
P(g)=\Ch(g)+\nabla_\ca T_{a}(g)+\nabla_a T_{\ca}(g)
\eeq
holds in all dimensions.

\medskip

The assumption on the geometric weight is necessary as the product of a Chern polynomial $c_{n+1}(g)$ of degree $n+1$  and the scalar curvature $S_g$ has geometric weight $n+2$ and vanishes in dimension $n$, but $c_{n+1}(g)S_g$ does not admit a decomposition like \eqref{main-thm-eq}.

In this theorem, we assume that $P(g)$ has an expression with homogeneous geometric weight.  We do not lose any generality by this since any K\"ahler invariant can be decomposed into the ones with homogeneous geometric weight by keeping the required property of the integral; see \S\ref{fmtsec}.

\subsection{Asymptotic expansion of the Bergman kernel}\label{asymB} We apply the main theorem to describe the asymptotic expansion of the Bergman kernel.  Let us recall the set up. A polarized manifold is a complex manifold $X$ of dimension $n$ with a positive hermitian line bundle $(\calL,h)$  over $X$;  the curvature of $h$ gives a K\"ahler form $\omega$ and an associated metric $g$ on $X$. For each integer $m$, the Bergman kernel of $H^0(X,\calL^m)$ is defined by taking an $L^2$ orthogonal basis $\varphi_{1},\dots,\varphi_{d_{m}}$ and forming the sum
$$
B_{m}(z)=\sum_{j=1}^{d_{m}}\|\varphi_{j}(z)\|^{2}.
$$
It is shown by Catlin \cite{Ca} and Zelditch \cite{Ze} (based on the works  \cite{Ya},  \cite{Ti},  \cite{Bouche}, \cite{Ru}) that $B_m$ has an asymptotic expansion 
$$
B_{m}\sim \sum_{j=0}^{\infty} a_{j}\,m^{n-j}\quad \text{as }m\to \infty,
$$
where $a_j$ are local invariants of the K\"ahler manifold $(X,g)$.  By the scaling of the metric, one can  see that $a_j$ has geometric weight $j$. The first few terms of the expansion have been explicitly written down (\cite{Lu}, \cite{En}, \cite{Lo}) and some algebraic procedures to compute $a_j$ are known; see \cite{Xu} and the Appendix here. 

On the other hand, we can explicitly compute the integral of $a_j$ via the Hirzebruch-Riemann-Roch theorem.  For the curvature form  $\sfR$ of $g$, the Todd genus form is given by
$$
\Td(\sfR)=\det\left(\frac{\sfR}{e^\sfR-1}\right).
$$
Let $\Td_{j}(\sfR)$ be the type $(j,j)$ component of $ \Td(\sfR)$. Then we may define a local K\"ahler invariant $P_{j}$ by the complete contraction:
$$
P_{j}(R)=\frac{1}{j!}\contr\big(\Td_{j}(\sfR)\big),
$$
which has a formal expression independent of the dimension. Then the  Hirzebruch-Riemann-Roch theorem implies 
$$
\int_{X}a_{j}\,\omega^{n}=\int_{X}P_{j}\,\omega^{n}
$$
for any dimension $n$.  Therefore the main theorem  implies that there exist one-form valued local K\"ahler invariants $T_{a}^{j}(g)$, $T_{\ca}^{j}(g)$,  such that
$$
a_j(g)=P_{j}(g)+\nabla_\ca T^{j}_{a}(g)+\nabla_a T^{j}_{\ca}(g).
$$
Since $a_j$ has geometric weight $j$, we know that the terms of degree $j$ (i.e., complete contractions with $\sigma=j$) agrees with $P_j$.  On the other hand, by using the Bianchi and Ricci identities,  it is easy to see that for each $a_j(g)$  the linear term in the curvature can be written as  a multiple of $\Delta^{j-1}S$, where $\Delta=\nabla_{a\ca}$ is the Laplacian and $S$ is the scalar curvature. The constant can be identified by the computation of the first variation of the kernel function under perturbations of the metric; see \cite[Theorem 3.1]{LT} and Appendix:
\beq\label{ajLu}
a_j(g)=\frac{j}{(j+1)!}\Delta^{j-1}S+\text{(non-linear terms)}.
\eeq
Combining the two results on the top and lowest degree terms, we have
$$
\begin{aligned}
a_1&=P_1=\frac12 S,\\
a_2&=P_2+\frac{1}{3}\Delta S.
\end{aligned}
$$
For $a_3$, the formula of Lu \cite{Lu} can be written as
$$
a_3=P_3+\nabla_\ca Q_a+\frac{1}{8}\Delta S,
$$
where $Q_a$ is a one form given by quadratic terms
$$
48 Q_a=\nabla_a(|R|^2-4|\Ric|^2+8S^2)
+2\nabla_{d} (R_{a\cb c\cd}\Ric_{b \cc}
-4 S \Ric_{a\cd}).
$$
Here $\Ric_{a\cb}=-R_{a\cb c\cc }$ is the Ricci curvature.

It is worth noting that $a_1=S/2$ is a key formula in Donaldson's proof \cite{Do} of the stability of the polarized manifolds $(X,\calL)$ with constant scalar curvature K\"ahler form in the first Chern class of  $\calL$; see Sz\'ekelyhidi
 \cite{Sz} for an introduction to this field.  The study of general $a_j$ can be seen as a part of Fefferman's program initiated in \cite{Fe}, where he proposed to study the Bergman and Szeg\"o kernels on strictly pseudoconvex domains as analogies of the heat kernel expansion on Riemannian manifolds. See \cite{Hi4}, \cite{Hi} and \cite{LT} for some progress in this direction.

\subsection{A comparison to global conformal invariants}
\label{conformal-intro}
We briefly comment on the relation between this work and the analogous problem in conformal geometry. There the issue was to understand the space of Riemannian invariants $P(g)$ in (real) dimension $n$, of geometric weight $n/2$ for which the integral 
\[
\int_MP(g)dV_g
\]
over compact Riemannian  manifolds $(M,g)$ is invariant under conformal changes of the underlying metric. 

In a series of works \cite{A1}--\cite{A5}, the first author showed that $P(g)$ can then be expressed as a sum of a local conformal invariant $W(g)$ of geometric weight $n/2$, a divergence $\nabla_{a}T_a(g)$,  and of the a multiple of the Pfaffian of the curvature tensor (i.e., the Chern-Gauss-Bonnet integrand) $\operatorname{Pfaff}(R)$:

\[
P(g)=W(g)+\nabla_aT_a(g)+c\cdot \operatorname{Pfaff}(R).
\]  
This is thus an analogue of the Main Theorem  above, where invariance under K\"ahler deformations of the metric is replaced with invariance under conformal transformations. 
(We can also say that the change of metric in a K\"ahler class corresponds to the conformal class of the hermitian metric on the line bundle $\calL$.)

The proof of the result in the conformal case is much more involved. This is essentially due to two reasons: Firstly, the very existence of (numerous) local conformal invariants makes the task of proving the result more challenging; in fact a major obstacle is how to \emph{separate} the local conformal invariant $W(g)$ from the divergence $\nabla_aT_a(g)$. Since there exist no non-trivial \emph{local} invariants of the K\"ahler class, one does not have this difficulty here. Secondly, an extra challenge in the conformal case is the algebraic complexity of the underlying local invariants $P(g)$: Indeed the curvature tensor and its covariant derivatives satisfy the symmetries of a Young tableau $\YDbox$, while in the K\"ahler case, once the metric is expressed in terms of the K\"ahler potential, the covariant derivatives of the curvature are (up to nonlinear terms) symmetric.

However, the K\"ahler setting does have an extra difficulty compared to the conformal case, which affects the proof very significantly. K\"ahler invariants are complete contractions of tensors where indices are naturally distinguished into two \emph{types}: holomorphic and anti-holomorphic. Accordingly, the divergences that one seeks to construct must preserve this structure; in that respect this raises difficulties not present in the 
conformal setting, which we now review.

\subsection{An overview of the proof}
The main strategy (as in \cite{A3}) is to proceed by an iteration: We choose the terms in $P(g)$ which have the highest order (when seen as a differential operator on the curvature tensor); if this order is non-zero, we show that that these terms separately can be expressed as a divergence, modulo corrections of lower order. Once we have shown this step, then by an iterative argument we are reduced to the case of order zero. In that case, we show that the invariant must be a  Chern polynomial.

To show the main iterative step, we use the fact that the \emph{variation} $L(\Psi)$ with respect to variations of the K\"ahler potential must always integrate to zero; see \S\ref{variatoin-int} below. This implies quite readily that $L(\Psi)$ is expressible  as a divergence. We refer to the formula we thus obtain as a {\em local divergence formula}.%
\footnote{The analogue of this formula in the conformal case was called the ``silly divergence formula" in \cite{A1}.}
However this does \emph{not} in itself imply that the top-oder terms in $P(g)$ are themselves a divergence. The derivation of this fact occupies the bulk of the present paper. 

The main approach in carrying out this strategy is to ``normalize" the top-order terms in $P(g)$ as much as possible by subtracting divergences. Once the top order terms have been normalized, the local divergence formula (together with an inductive argument on the proposition we are
proving) allow us to show that this normalized piece is again a divergence, thus our main claim follows. It is here that matters are much more involved in the K\"ahler case:

The main insight obtained in \cite{A1} is that if the top-order terms in $L(\Psi)$ do not contain any term with two indices in the same factor contracting against each other\footnote{Such contractions are called ``internal contractions" in \cite{A1} and ``traces" here.} then the top-order term must vanish. 
Proving this is difficult; it relies on the ``super divergence formula" in \cite{A1}. This tool is then put to use in proving the inductive step for the conformal setting. However this statement is \emph{false} in the K\"ahler setting. The existence of indices of two types allows one to easily construct divergences with no  traces. Thus the task of normalizing the top-order terms in $P(g)$ as much as possible becomes much more complicated. In fact, the explicit constructions of divergences done in various parts of \S\ref{dgt0} precisely serve this role of normalizing $P(g)$. The final task is to use the local divergence formula to pick out  a ``piece" of the (normalized) top-order term in $P(g)$. This relies on a new induction that depends on five parameters. 

This paper organized as follows.  In \S\ref{sec:results} we take the variation of the integral \eqref{int-eq0} and reduce the main theorem to a proposition for the variation $L(\Psi)$. 
We prove the proposition in \S\ref{dgt0} and \S\ref{chern_poly} respectively in the case of positive order and order zero.  In Appendix we give an algebraic procedure to compute the asymptotic expansion of the Bergman kernel using the result of \cite{HKN1} by using the Szeg\"o kernel of the circle bundle in  a negative line bundle; here we also apply the main theorem to study the $Q$-curvature of the circle bundle.

\section{Formulation of the problem and an outline of the ideas}
\label{sec:results}

The theorem will be proven by an inductive argument. We introduce some key concepts here that will be used extensively in the whole paper. 

\subsection{Local K\"ahler invariants: the first main theorem in invariant theory}
\label{fmtsec}
The {\em local invariants for a K\"ahler metric $g$} are defined to be polynomial expressions  $P(g)$ in the metric $g_{a\cb}$, its coordinate derivatives and $(\det g_{a\cb})^{-1}$, which remain invariant under holomorphic changes of coordinates. We say that $P(g)$ has {\em  geometric weight $\gw$} if it satisfies 
$$
P(\lambda g)=\lambda^{-\gw} P(g)
$$
for any positive constant $\lambda$.   Any local invariant can be decomposed into a sum of  terms with the same geometric weight:
$$
P(g)=\sum_w P_w(g).
$$
Moreover, if the integral of $P(g)$ is an invariant of the K\"ahler class, then this is also true for each $P_w(g)$; thus we may assume that 
all terms in $P(g)$ have a given geometric  weight,  without loss of generality.  In fact, if
$$
\int P(\wh g)dV_{\wh g}=\int P(g)dV_{g}
$$
whenever $\wh g=g+\ii\pa\ol\pa f$ ($g$ is identified with the corresponding K\"ahler form), then in view of $\lambda\wh g=\lambda g+\ii\pa\ol\pa\lambda f$,
we also have
$$
\int P(\lambda\wh g)dV_{\lambda\wh g}=\int P(\lambda g)dV_{\lambda g}.
$$
Thus expanding both sides in powers of $\lambda$, we obtain
$$
\int P_w(\wh g)dV_{\wh g}=\int P_w( g)dV_{g}
$$
as claimed.

The first main theorem in  invariant theory for the group $U(n)$ (see \cite{Weyl} and \cite{BEG}) shows that any such local K\"ahler invariant $P(g)$ can be expressed (non-uniquely) as a linear combination of  complete contractions in the iterated covariant derivatives of curvature tensor
$$
R^{(A,B)}=\nabla^{(A-2,B-2)}R,
$$
all of which have a given geometric weight. Thus
\beq\label{linear-combination}
P(g)=\sum_{l\in \La} a_l C^l(g),
\eeq
where $\La$ is a finite index set, $a_l$ are constants and each $C^l(g)$ is a complete contraction  constructed as follows: For a list of integers $(A_1,\dots,A_\sigma,B_1,\dots,B_\sigma)$ such that $A_j,B_j\ge2$ and 
$$
A_1+\cdots +A_\sigma=B_1+\cdots+B_\sigma,
$$
we consider a complete contraction of the form
\beq\label{contr1}
C(g)=
\contr\big(R^{(A_1,B_1)}
\otimes\dots\otimes R^{(A_\sigma,B_\sigma)}
\big),
\eeq
where the contraction is taken with respect to $w$ pairings of holomorphic and antiholomorphic indices; so we have $w=A_1+\cdots +A_\sigma$. Note that the geometric weight  of such an invariant is $\gw=w-\sigma$.  

\begin{definition}
For a complete (or partial) contraction of the form \eqref{contr1}, we define its {\em weight},\footnote{Note that the weight differs from the \emph{geometric weight} defined above.}  {\em degree} and  {\em order}, respectively, by $w$ (the number of contractions), $\sigma$ (the number of factors $R^{(A,B)}$) and 
$$
d=\sum_{j=1}^\sigma (A_j+B_j-4).
$$
\end{definition}

Note that  the order $d$ is the total number of the derivatives applied to the curvature. The equality $d=0$ holds if and only if $(A_j,B_j)=(2,2)$ for all $j$, i.e., no derivatives are  applied to the curvature tensor $R^{(2,2)}$.

\begin{definition}
For a linear combination of complete contractions \eqref{linear-combination}, its {\em minimal degree} $\sigma$ is defined to be the  minimum of the degrees of $C^l(g)$, $l\in\Lambda$.  Let $\Lambda^\sigma\subset\Lambda$ be the sub index set for which  $C^l(g)$ has degree $\sigma$.  We then define the {\em sublinear combination consisting of the terms of degree $\sigma$} to be
$$
P^\sigma(g)=\sum_{l\in \Lambda^\sigma} a_l C^l(g).
$$
\end{definition}

The definition of the minimal degree depends on the choice of expression of $P(g)$ as linear combination of complete contractions. However, we can estimate minimal degree $\sigma$  by the geometric weight $\gw$. We alway have
$$
\sigma\le\sigma+\frac{d}{2}= \gw
$$
and the equality holds only when the degree $d=0$.

\subsection{Chern polynomials}\label{chern-poly}
The Chern numbers of the holomorphic tangent bundle $T^{1,0}X$ are given by the integration of local K\"ahler invariants of order  $0$.  We briefly recall the construction.

Let $P(A)$ be an $\operatorname{Ad}_{U(n)}$-invariant polynomial in the components of a skew hermitian matrix $A_{a\cb}$ of homogenous degree $k$. 
Then substituting the curvature form
$$
\mathsf{R}_{a\cb}=R_{a\cb c\cd}dz^c\wedge dz^\cd
$$
into $P(A)$, we obtain  a $(k,k)$-from $P(\mathsf{R})$.  By the Bianchi identity, we see that $P(\mathsf{R})$ is a closed form; moreover, the de Rham class $[P(\mathsf{R})]$ depends only on the K\"ahler class. 
For example, if 
$$
P(A)=A_{a_1\ca_{k}}A_{a_2\ca_{1}}\cdots A_{a_k\ca_{k-1}},
$$
then the corresponding $(k,k)$-form is called {\em $k$-th Chern character} form and is denoted by 
$\ch_k(\mathsf{R})$.

On $n$ dimensional manifold with $n\ge k$, we can define a $(n,n)$-form by
$$
P(\mathsf{R})\wedge\omega^{n-k}/(n-k)!,
$$
whose de Rham class is also determined by the K\"ahler class $[\omega]$. Let us define a local K\"ahler invariant $P(g)$ of degree $k$ by
$$
P(g)\omega^n/n!=P(\mathsf{R})\wedge\omega^{n-k}/(n-k)!.
$$
Contracting both side $n$ times, we  get
$$
P(g)= \frac{\pm 1}{k!}\contr P(\mathsf{R}),
$$
where the sign depends on the paring of the indices in the complete contraction.

This $P(g)$ has weight $w=2k$ and vanishes on any  manifold of complex dimension $n<k$. For $n\ge k$, since the de Rham class  $[P(\mathsf{R})\wedge\omega^{n-k}]$ is determined by $[\omega]$, so is
$$
\int P(g)dV_g.
$$
We call linear combinations of K\"ahler invariants obtained in this way {\em Chern polynomials}.  

\subsection{Variation of local K\"ahler invariants and polarization}
\label{variatoin-int}
We will find it useful here to (locally) think of the K\"ahler metric $g$ in terms of a potential function. 

Consider a complex torus $X=\bC^n/\Gamma$ with flat K\"ahler metric $g_0$ defined by  the K\"ahler form
$$
\omega_0=\sqrt{-1} \pa\ol\pa |z|^2,
$$
where $z=(z^1,\dots,z^n)$ are the standard coordinates on $\bC^n$. We take a small open set $U\subset\bC^n$ and regard $z$ as local coordinates of $X$.  Then for a function $\phi\in C_0^\infty(U)$ and small $\epsilon$,
\beq\label{g-family}
\omega_\epsilon=\omega_0+\epsilon\sqrt{-1}\pa\ol\pa\phi
\eeq
gives a family of K\"ahler metrics $g_\epsilon$ on $X$. We  consider the variation of $P(g_\epsilon)$ with respect to $\epsilon$.  Let $\sigma$ be the minimal degree  of  $P(g_\epsilon)$ and define
$$
L^\sigma(\phi)=\frac{d^\sigma}{d\epsilon^\sigma} \Big|_{\epsilon=0}P(g_\epsilon).
$$
We can explicitly write down $L^\sigma(\psi)$ as follows: Let $P^\sigma(g)$ be the sublinear combination of $P(g)$ consisting of the terms of degree $\sigma$,
$$
P^\sigma(g)=
\sum_{l\in \Lambda} a_l \contr^l\big(R^{(A_1,B_1)}\otimes\cdots\otimes 
R^{(A_\sigma,B_\sigma)}\big).
$$
By formally replacing each factor $R^{(p,q)}$ by a factor $\pa^{(p,q)}\phi$, and then contracting the same pairs of indices, we have
\beq
\label{phi-formI}
L^\sigma(\phi)=\sum_{l\in \Lambda} a_l \contr^l(\pa^{(A_1,B_1)}\phi\otimes\cdots\otimes 
\pa^{(A_\sigma,B_\sigma)}\phi),
\eeq
where the contraction and covariant derivatives are defined with respect to the flat metric $g_0$.   
(For the flat connection we use the notation $\pa^{(A,B)}$ in place of $\nabla^{(A,B)}$.)

We also use the polarization of $L^\sigma(\phi)$, which is denoted by $L^\sigma(\Psi)$, where $\Psi=(\psi^1,\dots,\psi^\sigma)\in C^\infty_0(\bC^n,\bR^\sigma)$.  In terms of the differentials it is given by
$$
L^\sigma(\Psi)
=\frac{1}{\sigma!}
\frac{\pa^\sigma}{\pa\lambda_1\cdots \pa\lambda_\sigma}
L^\sigma(\lambda_1\psi^1+\cdots+\lambda_\sigma\psi^\sigma)
\Big|_{(\lambda_1,\dots,\lambda_\sigma)=0}.
$$
Or, when $L^\sigma(\phi)$ is in the form \eqref{phi-formI}, it is given by the substitution
$$
L^\sigma(\Psi)=\frac{1}{\sigma!}\sum_{(\pi,l)\in S_\sigma\times\Lambda}
a_l \contr^l(\pa^{(A_1,B_1)}\psi^{\pi(1)}\otimes\cdots\otimes 
\pa^{(A_\sigma,B_\sigma)}\psi^{\pi(\sigma)}),
$$
where contractions are taken as in the ones that is indexed by $l\in \Lambda$
and $S_\sigma$ is the symmetric group on $\{1,2,\dots,\sigma\}$. More generally, we also consider complete contractions of $\Psi$ that are linear in each $\psi^j$:
\beq \label{psi-formI}
L(\Psi)=\sum_{l\in \Lambda}
a_l \contr^l(\pa^{(A_1,B_1)}\psi^1\otimes\cdots\otimes 
\pa^{(A_\sigma,B_\sigma)}\psi^\sigma)
\eeq
but may not be symmetric in $(\psi^1,\dots,\psi^\sigma)$.  We call such an $L(\Psi)$ an {\em invariant of $\Psi$ of degree $\sigma$}.

\begin{definition}
\label{accetable}
An {\em acceptable invariant} of $\phi$ (resp. $\Psi=(\psi^1,\dots,\psi^\sigma)$) is a linear combination of the form \eqref{phi-formI} (resp. \eqref{psi-formI}) with 
\beq\label{kahler-restriction}
A_1,B_1,\dots, A_\sigma,B_\sigma\ge2.
\eeq
Similarly we define a {\em  $(p,q)$-tensor acceptable invariant} to be a linear combination of partial contractions of the tensors of the form 
$$
\pa^{(A_1,B_1)}\phi\otimes\cdots\otimes 
\pa^{(A_\sigma,B_\sigma)}\phi
$$
(resp. $\pa^{(A_1,B_1)}\psi^1\otimes\cdots\otimes\pa^{(A_\sigma,B_\sigma)}\psi^\sigma$) with \eqref{kahler-restriction} that leaves type $(p,q)$ free indices.  
\end{definition}

\begin{definition}
The {\em weight} of a partial contractions is defined to be the total number of contractions. We say that an acceptable invariant has weight $w$ if each term in the linear combination has weight $w$. 
\end{definition}

Note that if $P(g)$ has geometric weight $\gw$ and minimal degree $\sigma$, then its variation $L^\sigma(\psi)$ has weight $w=\gw+\sigma$ and degree $\sigma$.

\subsection{The second main theorem of invariant theory}
\label{section-SMT}
We have used the first main theorem in classical invariant theory to show that local invariants of a  K\"ahler metric are generated by complete contractions of the iterated covariant derivatives of curvature tensors.  The  relations among the complete contractions are given by the second main theorem of invariant theory.  Using this theorem, we show that the formal expression of $L^\sigma(\psi)$ is uniquely determined by the functional $C^\infty_0(\bC^n)\ni\phi\mapsto L^\sigma(\phi)\in\bC$ when $\sigma\le n$.

To make a precise statement, let us start by defining the notion of  equivalence for formal expressions.  Let $W=\bC^{n}$ with the standard hermitian metric.
We regard $W$ as the standard representation space of $U(n)$, which acts as left-multiplication on column vectors.  Let $W^*$ be the dual representation and  
$\ol {W^*}$ be its conjugate representation. We then define $U(n)$-modules
$$
V^{p,q}
=\bigotimes^p W^*\otimes
\bigotimes^p \ol{W^*}.
$$
Note that $V^{p,q}$ contains a submodule $S^{p,q}=\bigodot^p W^*\otimes\bigodot^p \ol{W^*}$, where $\bigodot$ denotes the symmetric tensor products.

We consider the $U(n)$-invariant polynomials in the components of the collection of tensors $(u^{(p,q)})\in\calS=\prod_{p,q\ge0}^{\infty}S^{p,q}$. By the first main theorem of classical invariant theory, we know that such an invariant polynomial of homogenous degree $\sigma$ can be expressed as a linear combination of complete contractions:
\beq
\label{general.comb}
L(u)=
\sum_{l\in \La} a_l \contr^l(u^{(A_1,B_1)}\otimes\dots\otimes u^{(A_\sigma,B_\sigma)}).
\eeq
We regard two formal complete (or partial) contractions of the form 
$\contr(u^{(A_1,B_1)}\otimes\dots\otimes u^{(A_\sigma,B_\sigma)})$ 
as the same if the ordered list $u^{(A_1,B_1)}, \dots, u^{(A_\sigma,B_\sigma)}$ and the pairings can be made to coincide by permuting the tensors $u^{(A_j,B_j)}$ and by permuting the barred and unbarred indices on each of the tensors.  We say that a linear combination of such formal complete 
(or partial) contractions {\em vanishes formally} if it can be made the same as the zero linear combination, by applications of the operations above and the distributive rule.

Given $L(u)$, we may define a differential operator $L(\phi)$ by substituting $\pa^{(A_j,B_j)}\phi$ into $u^{(A_j,B_j)}$.  If we fix a point $z_0\in\bC^n$, then $\pa^{(A_j,B_j)}\phi(z_0)$ runs through $S^{A_j,B_j}$ as $\phi$ varies in $C^\infty_0(\bC^n)$.  Thus the operator
$$
C^\infty_0(\bC^n)\ni\phi\mapsto L(\phi)\in V^{p,q}
$$ 
vanishes identically if and only if $L(u)=0$ for any $u\in\calS$.  In this case, we say that $L(u)$ {\em vanishes by substitution in dimension $n$}.

Clearly, if $L(u)$ vanishes formally then it must also vanish by substitution. The second main theorem of invariant theory shows us that the converse is also true,  provided the dimension $n$ is larger than the degree of $L(u)$; see \cite[Theorem C.3]{BEG} for the proof.

\begin{theorem}\label{second_main_thm}
Let $L(u)$ be a linear combination of complete (or partial) contractions \eqref{general.comb} of  degree $\sigma$. Assume that there exists an $n\ge\sigma$ such that $L(u)$ vanishes by substitution in dimension $n$. Then  $L(u)$ vanishes formally. 
\end{theorem}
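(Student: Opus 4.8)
The plan is to deduce Theorem~\ref{second_main_thm} from the classical second main theorem of invariant theory for $GL(n,\bC)$ (equivalently $U(n)$ acting on its standard representation and conjugate), in the polarized form stated as \cite[Theorem C.3]{BEG}. The only real content beyond citing that result is the translation between the two languages: on one side we have formal complete contractions of symmetric tensors $u^{(A_j,B_j)}\in S^{A_j,B_j}$, on the other side the abstract statement about invariant polynomials on a product of tensor powers of $W^*$ and $\ol{W^*}$. So the proof is essentially a careful bookkeeping argument, and I would organize it as follows.

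\smallskip

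First I would record the exact form of the classical theorem being invoked: a linear combination of complete contractions of tensors $v_1\otimes\cdots\otimes v_\sigma$, where each $v_j$ lies in $\bigotimes^{A_j}W^*\otimes\bigotimes^{B_j}\ol{W^*}$ and the contractions pair a $W^*$-index of one factor with a $\ol{W^*}$-index of another, vanishes formally whenever it vanishes identically as a multilinear function on $\prod_j(\bigotimes^{A_j}W^*\otimes\bigotimes^{B_j}\ol{W^*})$ in some dimension $n\ge\sigma$. The role of the hypothesis $n\ge\sigma$ is that the only $U(n)$-equivariant maps into the trivial representation are spanned by iterated contractions exactly in this range; for smaller $n$ there are extra relations coming from the fact that an antisymmetrization over $n+1$ indices vanishes (the ``Gram determinant''/Capelli-type identities), which is precisely the failure mode. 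I would state this as a black box and not reprove it.

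\smallskip

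Next comes the symmetrization step, which is where I would spend the most care. Our tensors $u^{(A_j,B_j)}$ are symmetric in their $A_j$ holomorphic indices and (separately) in their $B_j$ antiholomorphic indices, so they live in the submodule $S^{A_j,B_j}\subset V^{A_j,B_j}$, whereas the classical statement is phrased for the full tensor powers. The point is that nothing is lost: given a formal linear combination $L(u)$ of complete contractions of the $u^{(A_j,B_j)}$, I can regard it as a linear combination $\wt L(v)$ of complete contractions of arbitrary tensors $v_j\in V^{A_j,B_j}$ by the same pairing prescription; then $L(u)=\wt L(\pi(v))$ where $\pi$ is the projector onto $\prod_j S^{A_j,B_j}$, so $L(u)$ vanishes by substitution on $\calS$ iff $\wt L\circ(\mathrm{id}\otimes\cdots)$ precomposed with $\pi$ vanishes on $\prod_j V^{A_j,B_j}$. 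Applying the classical theorem to $\wt L\circ\pi$ (noting $\pi$ is itself a $U(n)$-equivariant idempotent built from symmetrizations) gives that $\wt L\circ\pi$ vanishes formally; then averaging the free holomorphic and antiholomorphic indices on each factor — which is exactly the operation "permuting the barred and unbarred indices on each tensor" allowed in our notion of formal equivalence — shows $L(u)$ itself vanishes formally. Equivalently, and perhaps more cleanly, one observes that the surjection $\bigotimes^{A_j}W^*\otimes\bigotimes^{B_j}\ol{W^*}\twoheadrightarrow S^{A_j,B_j}$ is split by the symmetrizer, so the space of $U(n)$-invariant multilinear functionals on $\prod_j S^{A_j,B_j}$ is a quotient of that on $\prod_j V^{A_j,B_j}$ by exactly the relations generated by the formal moves allowed on contractions of symmetric tensors, and the classical theorem describes the latter.

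\smallskip

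Finally I would observe that the substitution $u^{(A_j,B_j)}\mapsto\pa^{(A_j,B_j)}\phi(z_0)$ realizes every element of $S^{A_j,B_j}$ as $\phi$ ranges over $C^\infty_0(\bC^n)$ and $z_0$ ranges over $\bC^n$ (a routine Taylor-coefficient argument, already noted in the text preceding the theorem), so "$L(\phi)\equiv 0$ on $C^\infty_0(\bC^n)$" is the same as "$L(u)=0$ for all $u\in\calS$", i.e. vanishing by substitution in dimension $n$; combined with the previous paragraph this yields formal vanishing, completing the proof. The main obstacle, such as it is, is purely expository: making the reduction to \cite[Theorem C.3]{BEG} airtight despite the cosmetic differences (symmetric versus general tensors, the particular bookkeeping of holomorphic/antiholomorphic index pairings, and the role of the bound $n\ge\sigma$), rather than any new mathematical difficulty.
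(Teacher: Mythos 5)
Your proposal is correct and follows essentially the same route as the paper, which simply cites \cite[Theorem C.3]{BEG} for this statement (noting that Bailey--Eastwood--Graham prove the multilinear case and polarize); your symmetrization and substitution bookkeeping just makes that citation explicit.
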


Note that the condition $n\ge\sigma$ is sharp as a Chern polynomial of degree  $\sigma$ vanishes by substitution in dimension $n<\sigma$.

We also use this theorem in the setting of multilinear invariants.   We consider linear combinations of complete (or partial) contractions of the form
\beq
\label{polar.comb}
L(u_1,\dots,u_\sigma)=
\sum_{l\in \La} a_l \contr^l\big(u_1^{(A_1,B_1)}\otimes\dots\otimes u_\sigma^{(A_\sigma,B_\sigma)}\big).
\eeq
$L$ is linear in each $u_j\in\calS$ and the degree  is defined to be $\sigma$.  We regard two formal complete (or partial) contractions of this  form as the same if the pairings of indices can be made to coincide by permuting the barred and unbarred indices on each of $u_j^{(A_j,B_j)}$; accordingly we may define the notion of $L(u_1,\dots,u_\sigma)$ vanishing formally.

By substitution of $\pa^{(A_j,B_j)}\psi^j$ into $u_j^{(A_j,B_j)}$, we may now define a multilinear differential operator
$$
C^\infty_0(\bC^n,\bR^\sigma)\ni\Psi=(\psi^1,\dots,\psi^\sigma)\mapsto L(\Psi)\in V^{p,q}.
$$
If this  operator vanishes identically, we say that $L(u_1,\dots,u_\sigma)$  vanishes by substitution in dimension $n$. With these definitions, Theorem \ref{second_main_thm} also holds  for $L(u_1,\dots,u_\sigma)$. Actually, in \cite{BEG}, Bailey, Eastwood and Graham  proved the theorem in the multilinear case and used the polarization to imply Theorem \ref{second_main_thm}.

\subsection{Main Proposition}
Now we consider the integral equation for the invariant $L^\sigma(\phi)$.  Suppose that $P(g)$ is a local K\"ahler invariant such that the integral 
$$
\int_X P(g)dV_g
$$
depends only on the K\"ahler class of $g$ for any compact K\"ahler manifold of dimension $n$.  Then considering the family of metrics $g_\epsilon$ given by
\eqref{g-family}, we obtain
\beq\label{int-zero-I}
\int_{\bC^n} L^\sigma(\phi)dV=0\quad \text{for any }\phi\in C_0^\infty(\bC^n),
\eeq
where $dV$ is the standard volume form on the Euclidean space $\bC^n=\bR^{2n}$.
(We have assumed $\phi\in C_0^\infty(U)$ in \eqref{g-family}, but $U$ can be taken to be any bounded open set by choosing the lattice $\Gamma$ properly.)

\begin{definition}
We say that $L^\sigma(\phi)$ {\em  integrates to zero in dimension $n$} if \eqref{int-zero-I} holds. This definition can be generalized for the invariant $L(\Psi)$ of $\Psi=(\psi^1,\dots,\psi^\sigma)$, where $\phi\in C^\infty_0(\bC^n)$ is replaced by $\Psi\in C^\infty_0(\bC^n,\bR^\sigma)$.
\end{definition}

If we take $P(g)$ to be a Chern polynomial of homogeneous degree $\sigma$, then the $\sigma$-th variation gives $L^\sigma(\phi)$ that integrates to zero in all dimensions. We also call $L^\sigma(\phi)$ a {\em Chern polynomial.}

We next introduce the divergence of invariants of $\phi$. Let $T_a(\phi)$ be a $(1,0)$-from valued acceptable invariant of $\phi$:
$$
T_b(\phi)=\sum_{l\in\Lambda}a_l\pcontr^l_b\big(\pa^{(A_1,B_1)}\phi\otimes\cdots
\otimes\pa^{(A_\sigma,B_\sigma)}\phi\big),
$$
where the free index $b$ is contained in one of the factors in each term.  The divergence of $T_b(\phi)$ is  defined by
$$
\pa_{\cb}T_b(\phi)=\sum_{l\in\Lambda}a_l\pa_{\cb}\pcontr^l_b\big(\pa^{(A_1,B_1)}\phi\otimes\cdots
\otimes\pa^{(A_\sigma,B_\sigma)}\phi\big),
$$
where $\cb$ and $b$ are contracted with respect to the flat metric $g_0$. The derivative $\pa_\cb$ in each term can be expanded by using the Leibnitz rule and we obtain a sum of $\sigma$ complete contractions.  Note that each term has one more derivative in a factor and one additional contraction.  Thus if
$T_b(\phi)$ is acceptable and has weight $w$, then  $\pa_{\cb}T_b(\phi)$ is also acceptable and has weight $w+1$.

We can also define the divergence for $(0,1)$-from $T_{\cb}(\phi)$ by $\pa_b T_{\cb}(\phi)$.

\begin{proposition}\label{mainprop}
Let $L(\phi)$ be an acceptable invariant of degree $\sigma$, weight $w$ and  order $d$.  Assume that for an  $n\ge \sigma-1$, $L(\phi)$ integrates to zero in dimension $n$.

\medskip
\noindent
{\rm (a)} If $d>0$, then  there exist 1-form valued acceptable invariants $T_a(\phi)$ and $ T_{\ol{a}}(\phi)$ of degree $\sigma$ and weight $w-1$ such that
\beq
\label{theclaim}
L(\phi)=\pa_\ca T_a(\phi)+\pa_a T_\ca(\phi);
\eeq
moreover the above holds formally.

\smallskip
\noindent
{\rm (b)} 
If $d=0$, then $L(\phi)$ is a Chern polynomial.
\end{proposition}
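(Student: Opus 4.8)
The plan is to argue by induction, with primary parameter the weight $w$ (equivalently, for fixed degree $\sigma$, the order $d=2w-4\sigma$) and a handful of secondary bookkeeping parameters. The integral hypothesis \eqref{int-zero-I} enters only once, to produce a \emph{local divergence formula}; after that, essentially all of the work lies in ``normalizing'' $L(\phi)$ by subtracting divergences of acceptable invariants. Part~(a), $d>0$, is the generic inductive step: one extracts a divergence and is left with acceptable invariants that again integrate to zero but are strictly simpler, so the inductive hypothesis finishes. Part~(b), $d=0$, is the terminal case: here no derivative can be stripped off because every factor already sits at the minimal type $(2,2)$, and the obstruction to writing $L(\phi)$ as an acceptable divergence turns out to be precisely a Chern polynomial.

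\textbf{Step 1: the local divergence formula.} Polarize $L$ to the multilinear $L(\Psi)$, $\Psi=(\psi^1,\dots,\psi^\sigma)$. Pick a factor of maximal order and integrate by parts repeatedly, transferring all of its derivatives onto the remaining factors and the contractions; at the integrand level this rewrites $L(\Psi)=\pa_\ca J_a(\Psi)+\pa_a J_\ca(\Psi)+\psi^j M$, where the $J$'s are one-form valued invariants and $M$ is a $U(n)$-invariant of degree $\sigma-1$ in the remaining functions. Because $\int_{\bC^n}L(\Psi)\,dV=0$ we get $\int\psi^j M\,dV=0$ for all $\psi^j$, hence $M\equiv 0$ pointwise; since $\deg M=\sigma-1\le n$, Theorem~\ref{second_main_thm} upgrades this to a \emph{formal} identity $M=0$, whence, formally, $L(\Psi)=\pa_\ca J_a(\Psi)+\pa_a J_\ca(\Psi)$. (Equivalently: substitute exponentials $\psi^j$ so that each $\pa^{(p,q)}\psi^j$ becomes a rank-one symmetric tensor built from a frequency pair $(\zeta^j,\bar\zeta^j)$; then $L(\Psi)$ acquires a polynomial ``symbol'' $P_L$ in the Gram entries $\zeta^i\!\cdot\!\bar\zeta^j$, the hypothesis forces $P_L$ to vanish on $\{\sum_j\zeta^j=0\}$, the Nullstellensatz places $P_L$ in the ideal generated by the linear forms $\sum_j\zeta^j_a$ and $\sum_j\bar\zeta^j_a$, and that is the displayed formula under $\zeta^j_a\leftrightarrow\pa_a$, $\bar\zeta^j_a\leftrightarrow\pa_\ca$ on $\psi^j$.) Crucially, the $J_a,J_\ca$ so produced need \emph{not} be acceptable: the redistribution of derivatives can leave a factor with fewer than two holomorphic, or fewer than two anti-holomorphic, derivatives.

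\textbf{Step 2: normalization (the main obstacle).} Repairing acceptability in part~(a) is the heart of the argument, and it is here that the Kähler problem is genuinely harder than the conformal one. In the conformal case the ``super divergence formula'' of \cite{A1} forces a trace-free top-order part to vanish outright; no such statement holds here, since the two index types make trace-free divergences abundant, so there is no single clean reduction. Instead one exploits the non-uniqueness of the formula of Step~1 --- at bottom the commutation $\pa_a\pa_\cb=\pa_\cb\pa_a$, which trades a holomorphic divergence for an anti-holomorphic one plus a further divergence --- together with a battery of divergences built by hand (the content of \S\ref{dgt0}), to push the offending derivatives back onto the low-order factors while decreasing a complexity statistic. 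This is run as a delicate induction in several parameters (degree, order, the number of traces, the number of ``bad'' factors, and one measuring the failure of acceptability); at each stage the auxiliary acceptable invariants that appear still integrate to zero and are strictly simpler in the bookkeeping order, so the inductive hypothesis on the Proposition applies. The outcome is \eqref{theclaim} as a formal identity with acceptable $T_a,T_\ca$ of weight $w-1$.

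\textbf{Step 3: order zero.} When $d=0$ every factor is $\pa^{(2,2)}\psi^j$, so by the first main theorem the symbol $P_L$ of Step~1 is a polynomial in the entries $\zeta^i\!\cdot\!\bar\zeta^j$, of bidegree $(2,2)$ in each $(\zeta^j,\bar\zeta^j)$ and symmetric under permuting the $\psi^j$, and it lies in the ideal generated by $\sum_j\zeta^j_a$ and $\sum_j\bar\zeta^j_a$. On the other hand a Chern polynomial of degree $\sigma$ is the $\sigma$-th variation of a Chern number, hence of a Kähler-class invariant, so its symbol also lies in that ideal; it therefore suffices to prove the converse. Writing such symbols out in the $\zeta^i\!\cdot\!\bar\zeta^j$ one classifies order-zero Kähler invariants of degree $\sigma$ as linear combinations of traces of powers of the curvature endomorphism times an antisymmetrized ``form'' part, and checks that ideal membership forces exactly the all-endomorphism shape --- that is, $P(\sfR)\wedge\omega^{n-\sigma}$ for an $\operatorname{Ad}_{U(n)}$-invariant polynomial $P$ --- so that $L(\phi)$ is a Chern polynomial; this is carried out in \S\ref{chern_poly}. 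The apparent discrepancy between the hypothesis $n\ge\sigma-1$ and the $n\ge\sigma$ of Theorem~\ref{second_main_thm} is harmless, since a Chern polynomial of degree $\sigma$ vanishes by substitution in dimension $\sigma-1$.
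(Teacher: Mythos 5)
Your outline correctly identifies the paper's overall strategy --- polarize, derive the local divergence formula by integrating all derivatives off one factor and invoking the second main theorem, then run an induction on the weight in which the real work is normalizing $L$ by subtracting acceptable divergences --- and your diagnosis that the $J_a,J_\ca$ produced by the naive integration by parts need not be acceptable is exactly right. But the proposal stops where the proof begins: Step 2 compresses all of \S\ref{dgt0} into a sentence, and two things are missing that cannot be waved at. First, the induction does not close on the Proposition as stated: one must prove a strengthened statement (Proposition \ref{main.prop}) involving restriction lists $\calL$, a designated special factor and second special factor, and the extra clauses (i), (ii) guaranteeing that the free indices of $T_a,T_\ca$ avoid those factors and that no special contractions are created. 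Without formulating this stronger inductive hypothesis, the step ``the auxiliary invariants are simpler, so apply induction'' fails, because after erasing a factor or a set of contractions one must control where the free index of the resulting divergence lands in order for the re-attachment to yield an acceptable divergence of the original $L$. Your list of bookkeeping parameters does not match the actual ones, and no mechanism is given for this re-attachment. Second, each reduction (removing traces, removing special contractions $\Special$, the final normalization) hinges on a \emph{new} integral equation for a lower-weight invariant, and deriving it requires isolating a sublinear combination of $\Local_kL(\Psi)$ (by counting traces and contractions of specified types and maximizing lexicographically over several parameters, e.g.\ $M_1,\dots,M_5$ or $M,k,\tau$) and proving that this sublinear combination arises \emph{exclusively} from the targeted terms of $L$; these are Claims \ref{claim-int-eq-wtL}, \ref{lem.intM5L}, \ref{lem.new.int}, \ref{claim1} and \ref{last-claim} in the paper, and none of them is addressed.

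For part (b) your route is genuinely different in flavor from the paper's, but its key step --- ``ideal membership forces exactly the all-endomorphism shape'' --- is a restatement of the claim rather than an argument: nothing is said about why an order-zero invariant whose symbol lies in the ideal $(\sum_j\zeta^j_a,\ \sum_j\bar\zeta^j_a)$ must be a linear combination of the $I_{k_1,\dots,k_m}$. The paper instead first subtracts Chern polynomials to remove every term in which all factors carry a trace (using the cycle decomposition of the directed graph attached to a maximal-height term, Lemma \ref{lemma:internal.contr22}), and then shows the remainder vanishes by extracting the height-$(M+4)$ part of $\Local_1L(\Psi)$ and formally reconstructing $[\psi^1\|M\|L(\Psi)]$ from it (Lemma \ref{lemma:type22}). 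This last step is also what makes the hypothesis $n\ge\sigma-1$ sufficient, since the formal vanishing is applied to a degree-$(\sigma-1)$ expression; your closing remark that Chern polynomials of degree $\sigma$ vanish by substitution in dimension $\sigma-1$ does not by itself bridge that gap.
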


We can easily reduce our main theorem to this proposition.

\begin{proof}[Proof of the main theorem by using Proposition \ref{mainprop}]
Let $\sigma$ be the minimal degree of $P(g)$. Then $n\ge\gw-1$ implies $n\ge \sigma-1$ and thus the variation $L^\sigma(\phi)$ of $P(g)$ satisfies the assumption of Proposition \ref{mainprop}. If $\gw>\sigma$, then $L^\sigma(\phi)$ has order
$$
d=2(\gw-\sigma)>0.
$$
Hence Proposition \ref{mainprop} (a) gives $T_a(\phi)$ and $T_\ca(\phi)$ satisfying \eqref{theclaim}.
  Substituting $R^{(p,q)}$ into $\pa^{(p,q)}\phi$, we define one-from valued local K\"ahler invariants $T_a(g)$, $T_\ca(g)$. Now, since \eqref{theclaim} holds formally, we  repeat the formal operations by which  the left-hand side  of \eqref{theclaim} is made identical to its right-hand side. We derive that
$\pa_\ca T_a(g)+\pa_a T_\ca(g)$ is equal (by substitution) to sum of terms of degree $\sigma$ in $P(g)$,  modulo correction terms of degree $\sigma+1$ and geometric weight $\gw$.   Thus we derive  that $P'(g)$ defined via
$$
P'(g)=P(g)-\pa_\ca T_a(g)-\pa_a T_\ca(g)
$$
has degree $>\sigma$, the geometric weight $\gw$, and the integral
$$
\int_X P'(g)dV_g=\int_X P(g)dV_g
$$
depends only on the K\"ahler class for any compact K\"ahler manifold $(X,g)$ of dimension $n$.   We can repeat this procedure and raise the minimal degree until we get $d=0$. When  $d=0$,   Proposition \ref{mainprop} (b) shows that $L^\sigma(\phi)$ is a Chern polynomial.  In this case, the substitution of $R^{(2,2)}$ into $\pa^{(2,2)}\phi$  recovers $P(g)$ and we see that $P(g)$ is also a Chern polynomial.
\end{proof}

\subsection{Some definitions, notations and  tools}\label{tools}
The rest of this paper is devoted to the proof of Proposition \ref{mainprop}. We now introduce two important  tools that will be repeatedly used in the proof.
The first  we call the ``local divergence formula'';  this is a collection of explicit formulae that  express a local K\"ahler invariant $L(\Psi)$ that always integrates to zero as a  divergence. The next tool is a technique to {\it re-obtain} a new integral equation,  from the local divergence formula, after applying simple algebraic manipulations to the ``local divergence formula'' of $L(\Psi)$.

\subsubsection{Definitions and notations}
\label{Notation-subsec}
We introduce some  notations to simplify the computations involving  contractions.  We will set 
$$
\Psi^j=\pa^{(A_j,B_j)}\psi^j
$$
and write the indices as
$$
\Psi^j_{t_1\dots t_a\cs_1\dots \cs_b}=\pa_{t_1\dots t_a\cs_1\dots \cs_b}\psi^j,
$$
where $a=A_j$ and $b=B_j$.  We also use multi index notation, e.g.,
$$
\Psi^j_{\calI\calJ},
\quad\text{
where
}\  \calI=t_1\dots t_p \cs_1\dots\cs_q,\ \calJ=t_{p+1}\dots t_a \cs_{q+1}\dots\cs_b.
$$
As $\Psi^j$ is symmetric in the derivative indices, we can freely change the order of indices; so $\Psi^j_{\calI\calJ}$ and $\Psi^j_{t_1\dots t_a\cs_1\dots \cs_b}$ are identified. We set 
$$
|\calI|=|t_1\dots t_p \cs_1\dots\cs_q|=p+q
$$
and define the conjugation by
$$
\ol\calI=\ct_1\dots\ct_p s_1\dots s_q.
$$
We also allow $\calI$ to be an empty list and then set $|\calI|=0$. When we are only interested in specific contractions, we omit irrelevant indices from the notation. For example, if there is a contraction between $\Psi^i$ and $\Psi^j$, we write
$$
\Psi^i_t\Psi^j_\ct \quad\text{or}\quad
\Psi^i_\ct\Psi^j_t.
$$
When we do not need to specify the type of indices, we also use upper case indices
$$
\Psi^i_T\Psi^j_{\ol T},
$$
where $T$ can be holomorphic $t$ or antiholomorphic $\ct$.  Note that this notation {\em  does not} mean that $T$ runs though $1,\dots, n,\ol 1,\dots,\ol n$.
In the case a pair of indices in a factor is contracted, we write
$$
\Psi^j_{t\ct}=\Delta\Psi^j
$$
and call such contraction a {\em trace}.  If there are $m$ pairs of indices that are contracted, we write
$$
\Psi^j_{t_1\dots t_m s_1\dots s_p\ct_1\dots\ct_m \ol u_1\dots \ol u_q}=\Delta^m\Psi^j_{s_1\dots s_p \ol u_1\dots \ol u_q}.
$$

With these notations, an invariant of $\Psi$ 
\beq
\label{Lsum}
L(\Psi)=\sum_{l\in \La} a_l \,{\contr}^l(\partial^{(A_1,B_1)}\psi^1\otimes
\dots\otimes
\partial^{(A_\sigma,B_\sigma)}\psi^\sigma),
\eeq
is written simply as
$$
L(\Psi)=\sum_{l\in \La} a_l \,{\contr}^l(\Psi^1\cdots
\Psi^\sigma),
$$
where  we also omit the symbol  $\otimes$, or more simply as
\beq
\label{Lsums}
L(\Psi)=\sum_{l\in \La} a_l \, C^l(\Psi).
\eeq
What will often be important are {\it sublinear combinations} of such linear combinations. In particular, for any subset $K\subset\La$, we let:
$$
[K\| L(\Psi)]:=\sum_{l\in K} a_l \, C^l(\Psi).
$$
If we then consider further subsets $F\subset K$,  etc., (some condition that define subset of $K$), we will denote those by $[F\| K\| L(\Psi)],$ etc.

\subsubsection{The local divergence formula}
We consider any linear combination of complete  contractions $L(\Psi)$ of the form \eqref{Lsums},
where each term in $L(\Psi)$ has a given weight $w$,  given degree $\sigma$.  The main assumption of the local divergence formula is that $L(\Psi)$  integrates to zero in dimension $n$ with an $n\ge\sigma-1$, i.e.,
$$
\int L(\Psi)=0\quad \text{for any }\Psi\in C^\infty(\bC^n,\bR^\sigma).
$$
The integration is done over $\bC^n=\bR^{2n}$ with respect to the standard volume form; this is omitted here and in the following.

Now given any $L(\Psi)$, we explicitly write it out with the indices for $\Psi^1$ and $\Psi'=(\Psi^2,\dots,\Psi^\sigma)$. Write the indices on $\Psi^1$ as $\Psi^1_{\calI}$,  $\calI$ is contracted against  $\ol\calI$ on $\Psi'$. Thus we may write $C^l(\Psi)=\contr^l(\Psi^1_{\calI}\cdots\Psi^\sigma)$ as
$$
\Psi^1_{\calI}\pcontr^l_{\ol\calI}(\Psi'),
$$
where ${\ol\calI}$ are free indices for the partial contraction $\pcontr^l(\Psi')$ and the indices in $\ol\calI$ are  contracted against $\calI$ in $\Psi^1_{\calI}$.  Let us write each term of $L(\Psi)$ in  this way
\beq
L(\Psi)=\sum_{l\in \Lambda} a_l \Psi^1_{\calI}\pcontr^l_{\ol\calI}(\Psi').
\eeq
We can perform integrations by parts of for the derivatives $\pa_\calI$ on $\psi^1$. Then we get
$$
\int L(\Psi)=
\int \sum_{l\in \Lambda}(-1)^{|\calI|} a_l 
\psi^1\,
\pa_{\calI}\pcontr^l_{\ol\calI}(\Psi').
$$
The above integral will vanish for {\it all} scalar-valued functions  $\psi^1$. This implies a {\it local} equation: 
\beq\label{localeq-example}
\sum_{l\in \Lambda} (-1)^{|\calI|}a_l\,  \pa_{\calI}\pcontr^l_{\ol\calI}(\Psi')
=0.
\eeq
Here the list $\calI$ depends on $l\in\Lambda$. Expanding the derivatives $\pa_{\calI}$ by using Leibnitz rule, we obtain a linear relation among complete contractions of the derivatives of $\Psi'$. This new local equation is denoted by $\Local_1L(\Psi)=0$. We call it the 1-local divergence formula. 

The same argument can be applied to any factor $\psi^k$, $k\in \{1,\dots,\sigma\}$.  We denote the resulting local equation by:
$$
\Local_kL(\Psi)=0.
$$
We note that, by Theorem \ref{second_main_thm} (see also the paragraph below it),  the above equation holds \emph{formally} since $n\ge\sigma-1$.

\subsubsection{Formal operations on integral equations.}\label{int-by-part-sec}
A second tool that we will often use in this paper will be to go from an integral equation of the form
\[
\int L(\Psi)=0
\] 
to a \emph{new} integral equation of the form
\[
\int  \wt L(\Psi)=0,
\]
where the linear combination $\wt{L}(\Psi)$ arises from $L(\Psi)$ via some  \emph{formal operation}. We give an  example to facilitate the understanding of the arguments in the next sections.

Let us express $L(\Psi)$ as in \eqref{Lsum}. In each term ${\rm contr}^l(\dots)$, $l\in \La$, let us also consider the  number of contractions between the factors $\Psi^1,\Psi^\sigma$;  let $N(l)$ be that number. We may schematically express $L(\Psi)$ as follows:
\beq
L(\Psi)=\sum_{l\in \La} a_l \contr^l
\big(\Psi_{\mc{J}\mc{I}}^1\cdots
\Psi^\sigma_{\ol{\mc{J}}\mc{Y}}\big),
\eeq
with the convention that indices $\mc{J}$ in  $\Psi^1$ contract against the indices in $\ol{\mc{J}}$ in $\Psi^\sigma$ (hence $|\calJ|=N(l)$); $\calI, \mc{Y}$ are sets of indices that are contracted but not against each other.

Now,  let $M:=\max_{l\in\La} N(l)$ and let  $\La_M\subset \La$ be the index set of terms with $N(l)=M$. Let us denote by 
\beq
\label{oldeqn}
\wt{L}(\Psi)=\sum_{l\in \La_M} a_l \,\contr^l
(\Psi^1_{\mc{I}}\cdots
\Psi^\sigma_{\mc{Y}})
\eeq
the new sum of terms that arises from the terms indexed in $\La_M$ in $L(\Psi)$ by \emph{erasing} the indices in $\mc{J},\ol{\mc{J}}$ in the factors 
$\Psi^1, \Psi^\sigma$, respectively. Thus, by construction, $\wt{L}(\Psi)$ has terms with weight $w-M$ and degree $\sigma$.

\begin{lemma}
Let $L(\Psi)$ be a local invariant of $\Psi$ of degree $\sigma$.  Assume that there exists an $n\ge\sigma-1$ such that $L(\Psi)$ integrates to zero in dimension $n$ and that no term in $L(\Psi)$ contains traces. Then $\wt{L}(\Psi)$ also integrates to zero in dimension $n$.
\end{lemma}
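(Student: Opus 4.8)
The strategy is to produce the required integral equation for $\wt L$ by a substitution of the test functions $\psi^j$, in the spirit of \S\ref{int-by-part-sec}. Fix a vector $a\in\bC^n$ and a real parameter $t$, put $\rho(z)=\langle a,z\rangle+\langle\bar a,\bar z\rangle=2\Re\langle a,z\rangle$, and make the substitution $\psi^1\mapsto\psi^1e^{t\rho}$, $\psi^\sigma\mapsto\psi^\sigma e^{-t\rho}$, leaving $\psi^2,\dots,\psi^{\sigma-1}$ unchanged. Since each modified function is still in $C_0^\infty$ and $e^{t\rho}e^{-t\rho}\equiv1$, the resulting integrand is a polynomial in $t$ with coefficients built from complete contractions of the $\psi^j$'s and powers of $a,\bar a$, and $\int L(\Psi)=0$ in dimension $n$ forces each coefficient of $t^k$ to integrate to zero in dimension $n$.

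Next I would record how the substitution acts on a single term $C^l(\Psi)$. Each holomorphic (resp.\ antiholomorphic) derivative slot of $\Psi^1$ is either kept or replaced by $ta_i$ (resp.\ $t\bar a_i$), and similarly for $\Psi^\sigma$ with the opposite sign; nothing happens on the other factors. If a contraction joining a slot of $\Psi^1$ to a slot of $\Psi^\sigma$ has \emph{both} endpoints replaced, it collapses to the scalar $-t^2|a|^2$; any other replaced slot turns the contraction it participated in into a directional derivative of the factor it met, contributing one power of $t$ and one ``free'' $a$ or $\bar a$. This is precisely where the hypothesis that $L(\Psi)$ has no traces is used: an internal contraction inside a single factor would also collapse to a scalar $|a|^2$ under the substitution, so once traces are excluded the only mechanism producing a factor $|a|^2$ is the erasure of a genuine $\Psi^1$-$\Psi^\sigma$ contraction. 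Extracting the coefficient of $t^{2M}$, its leading part comes from activating, in each term $l\in\La_M$, all $M$ of its $\Psi^1$-$\Psi^\sigma$ contractions on both endpoints, and equals exactly $(-|a|^2)^M\,\wt L(\Psi)$; every other contribution to that coefficient carries at least one free $a$ or $\bar a$ tied to a derivative of some factor.

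To finish I would project away the contributions carrying free $a$'s: apply the constant-coefficient Laplacian $\Delta_a=\sum_i\pa_{a_i}\pa_{\bar a_i}$ exactly $M$ times and then set $a=0$ (this keeps only the part of total $(a,\bar a)$-bidegree $(M,M)$), and separate out the weight-$(w-M)$ homogeneous piece by rescaling $\psi^j\mapsto\psi^j(\lambda\,\cdot\,)$. Since $\Delta_a^M(|a|^2)^M\big|_{a=0}\neq0$ for $n\ge1$, what survives is $c\int\wt L(\Psi)=0$ with $c\neq0$; as $\deg\wt L=\sigma\le n+1$, this is the assertion of the lemma.

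The hard part, and the one I expect to absorb most of the work, is the claim in the second paragraph that the projection really leaves only $(-|a|^2)^M\wt L$. Contributions in which only $k<M$ of a term's $\Psi^1$-$\Psi^\sigma$ contractions are activated cleanly, with the remaining $2(M-k)$ active slots producing directional derivatives, also have weight $w-M$ and can land in the same $(a,\bar a)$-bidegree; after the projection they become complete contractions in which the $\Psi^1$-$\Psi^\sigma$ pairing has been rerouted through the middle factors (sometimes creating a trace there). Showing that these do not interfere -- or can be stripped off -- requires a careful analysis of which index patterns the projection can manufacture, and is most naturally organised as an induction on $M$ (the maximal number of $\Psi^1$-$\Psi^\sigma$ contractions), re-applying the lemma, or the very same substitution argument, to the new invariant $c\wt L+(\text{corrections})$ and iterating until only $\wt L$ remains; this is the same sort of bootstrap by which the local divergence formula is used repeatedly in \S\ref{dgt0}.
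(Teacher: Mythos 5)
There is a genuine gap, and it sits exactly where you flag it: the projection step does not do what the argument needs. Applying $\Delta_a^M$ and setting $a=0$ to the coefficient of $t^{2M}$ does not isolate the multiples of $(|a|^2)^M$; it contracts \emph{every} holomorphic $a$-index against \emph{every} antiholomorphic $\bar a$-index over all pairings. Already for $M=1$ and $\sigma=3$: a term $C^l$ with \emph{no} $\Psi^1$--$\Psi^3$ contraction contributes to the $t^2$, bidegree-$(1,1)$ piece by replacing one holomorphic slot of $\Psi^1$ (contracted, say, against $\Psi^2_{\,\ci}$) and one antiholomorphic slot of $\Psi^3$ (contracted against $\Psi^2_{\,j}$), producing $a_i\bar a_j\,\Psi^2_{\,\ci}\cdots\Psi^2_{\,j}\cdots$; the projection turns this into the generically nonzero complete contraction $\sum_i\Psi^2_{\,\ci}\cdots\Psi^2_{\,i}\cdots$ of the same weight $w-1$. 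So what you obtain is $c\int\wt L(\Psi)+\int(\text{rerouted corrections})=0$, not $c\int\wt L(\Psi)=0$. The no-traces hypothesis prevents a \emph{pre-existing} $|a|^2$ from arising inside one factor, but it does nothing to stop the projection from \emph{manufacturing} new contractions (and traces) out of partially activated slots and out of terms with $N(l)<M$. Your proposed repair --- an induction on $M$, ``re-applying the lemma to $c\wt L+(\text{corrections})$'' --- is not carried out and is problematic as stated: it is circular to invoke the lemma being proved, the correction terms can contain traces (so they violate the lemma's hypothesis), and they arise from terms outside $\La_M$, so the inductive parameter does not visibly decrease. (Two minor points: the rescaling step is vacuous, since every contribution to the $t^{2M}$ coefficient already has $2w-2M$ derivative indices; and the remark ``$\deg\wt L=\sigma\le n+1$'' plays no role in the conclusion you are after.)

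For comparison, the paper's proof avoids this interference entirely by working with the \emph{formal} local equation $\Local_1L(\Psi)=0$ (valid formally because $n\ge\sigma-1$, by Theorem \ref{second_main_thm}). The sublinear combination of $\Local_1L(\Psi)$ whose factor $\Psi^\sigma$ carries exactly $M$ traces must vanish \emph{separately and formally}, because formal operations preserve the number of traces in each factor; and since $L(\Psi)$ has no traces, that sublinear combination is produced exclusively by forcing all $M$ derivatives $\pa_{\mc J}$ onto $\Psi^\sigma$ in the terms of $\La_M$. Erasing those $M$ traces and reversing the integration by parts then gives $\int\wt L(\Psi)=0$. The essential asset there is a formal identity that can be split along a combinatorial invariant; your substitution in $t$ and $a$ yields only numerical integral identities, and the $\Delta_a^M$ projection scrambles precisely the index patterns one needs to keep separate.
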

\begin{proof} Consider the local equation $\Local_1L(\Psi)=0$.  This equation can be expressed schematically as:
\[
\sum_{l\in \La} a_l (-1)^{|\mc{J}\mc{I}|}
\partial_{\mc{J}\mc{I}}\big[\pcontr^l\!\big
(\Psi^2\cdots 
\Psi^\sigma_{\ol{\mc{J}}\mc{Y}}\big)\big]=0.
\]
This equation holds for any $\Psi'\in C^\infty_0(\bC^n,\bR^{\sigma-1})$.

We consider the sublinear combination $[M\|\Local_1L(\Psi)]$ which contains precisely $M$ traces in the factor $\Psi^\sigma$. Since the terms in $L(\Psi)$ did \emph{not} contain traces, we  see that such terms in the above  equation can arise \emph{if and only if} all the  derivatives $\partial_{\mc{J}}$ in 
$$
\partial_{\mc{J}\mc{I}}\big[\pcontr^l\!\big
(\Psi^2\cdots 
\Psi^\sigma_{\ol{\mc{J}}\mc{Y}}\big)\big]
$$
are forced to hit the factor  $ \Psi^\sigma_{\ol{\mc{J}}\mc{Y}}$. Now, observe that
\beq
\label{local*}
[M\|\Local_1L(\Psi)]=0.
\eeq
This is true since $  \Local_1L(\Psi)=0$ holds \emph{formally} (see \S\ref{section-SMT}), \emph{and} since the number of traces in any given factor in any given term in $\Local_1L(\Psi)$ remains invariant under the formal operations allowed. Thus, the terms in $ [M\|\Local_1L(\Psi)]$ cannot be cancelled by applying formal operations to any \emph{other} term in $  \Local_1L(\Psi)$. By construction, equation   \eqref{local*}  can be expressed as:
\beq
\label{local*again}
\sum_{l\in \La^M} a_l (-1)^{M+|\mc{I}|}
\partial_{\mc{I}}\big[\pcontr^l\!\big
(\Psi^2\cdots 
\Psi^\sigma_{\calJ\ol{\mc{J}}\mc{Y}}\big)\big]=0.
\eeq

Now, the second step is to perform a formal \emph{erasing of indices} in this local equation, obtaining a \emph{new} local equation.  To do this, notice that \eqref{local*again} must  \emph{again} hold formally. It follows  that if we let 
$$
\pcontr^l\!\big
(\Psi^2\cdots 
\Psi^\sigma_{\mc{Y}}\big)
$$
to be the terms that arise from the above equation by just \emph{erasing} the $M$ traces in the factor $\Psi^\sigma\!$, then we have 
the equation: 
\beq
\label{testeq}
\sum_{l\in \La^M} a_l (-1)^{|\mc{I}|}
 \partial_{\mc{I}}\big[\pcontr^l\!\big
(\Psi^2\cdots 
\Psi^\sigma_{\mc{Y}}\big)\big]=0.
\eeq

Finally, we can show that $\wt{L}(\Psi)$  integrates to zero in dimension $n$ by the just considering the integral of the above over $\bC^n$:
$$\int 
\sum_{l\in \La^M} a_l (-1)^{|\mc{I}|}\Psi^1
\partial_{\mc{I}}\big[\pcontr^l\!\big
(\Psi^2\cdots 
\Psi^\sigma_{\mc{Y}}\big)\big]=0,
$$
and integrating by parts the derivatives in $\partial_{\mc{I}}$; each such integration by parts of $\pa_\calI$ forces  each derivative $\partial_A, A\in \mc{I}$  to hit the factor $\Psi^1$. Thus, repeating this $|\mc{I}|$ times, we get
$$
\int\sum_{l\in \La^M} a_l\Psi^1_{\calI}
\pcontr^l\!\big
(\Psi^2\cdots 
\Psi^\sigma_{\mc{Y}}\big)=0.
$$
The integrand is exactly $\wt{L}(\Psi)$. 
\end{proof}

Further down, this process is simply referred to as ``integrating the local equation \eqref{testeq} and integrating by parts again" or ``reversing the order of integrations by parts." 

\section{Proof of Proposition \ref{mainprop} (a): the case of positive order}
\label{dgt0}
The proof is done by a multiple induction.  To formulate the steps, we will slightly re-state our proposition in a more general form.  We first generalize the notion of acceptable complete/partial contractions. As in the previous subsection, we consider complete/partial contractions of the form
\beq
\label{contraction}
C(\Psi)=\pcontr(\Psi^1\cdots\Psi^\sigma),
\quad\text{where } 
\Psi^j=\pa^{(A_j,B_j)}\psi^j.
\eeq

\begin{definition}\label{calLdef}
Consider a list $\calL=(\alpha_1,\beta_1,\dots,\alpha_\sigma,\beta_\sigma) \in \{0,1,2\}^{2\sigma}$. We  call a partial  contraction $C(\Psi)$  in the form  \eqref{contraction} {\em $\calL$-acceptable} if $A_j\ge\alpha_j$ and $B_j\ge\beta_j$ for all $j$.  We call the list $\calL$ the {\em restriction list}. For an $\calL$-acceptable contraction $C(\Psi)$, we say that a factor  $\pa^{(A_j,B_j)}\psi^j$ is {\em minimal} if $(A_j,B_j)=(\alpha_j,\beta_j)$. The {\em order} $d$ of  $C(\Psi)$ is defined by
$$
d=\sum_{i=1}^\sigma(A_j+B_j-\alpha_j-\beta_j).
$$
\end{definition}

For an $\calL$-acceptable $C(\Psi)$,  the order $d$ is positive if and only if it has at least one non-minimal factor.

We can now state a generalized version of Proposition \ref{mainprop} (a) that we will be proving:

\begin{proposition}
\label{main.prop} Let $L(\Psi)$ be an $\calL$-acceptable scalar-valued invariant   of degree $\sigma$ and order $d>0$.  Assume that $L(\Psi)$ integrates to zero in dimension $n$ for some $n\ge\sigma-1$. Then there exist $\calL$-acceptable forms $T_a(\Psi), T_\ca(\Psi)$ such that
\beq
L(\Psi)=\pa_\ca T_a(\Psi)+\pa_a T_{\ca}(\Psi).
\eeq
Moreover, if the assumption is slightly strengthened, then the conclusion may be slightly strengthened as well. Assume further that $L(\Psi)$ contains no traces  in all terms. Then, for each fixed two numbers $i,j\in \{1,\dots,\sigma\}$ with $i\ne j$,
\begin{itemize}
\item[(i)] 
$T_a(\Psi), T_\ca(\Psi)$ can be chosen to contain no traces and the free indices $a$, $\ca$  not to belong to $\Psi^i$. 
\item[(ii)]
If in addition there are no contractions of the form $\Psi^i_\cs\Psi^j_s$ in all terms of $L(\Psi)$, then $T_a(\Psi)$, $T_\ca(\Psi)$ can in addition be chosen so that there are no contractions of the form $\Psi^i_\cs\Psi^j_s$ and so that the free indices $a, \ca$ do not belong to either $\Psi^i,\Psi^j$.
\end{itemize}
\end{proposition}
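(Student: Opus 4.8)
The plan is to argue by a multiple induction whose outermost parameter is the degree $\sigma$. The base case $\sigma=1$ is immediate: a nonzero complete contraction of a single factor $\pa^{(A,B)}\psi^1$ forces $A=B$ and equals a constant multiple of $\Delta^A\psi^1$, and since $d>0$ gives $2A>\alpha_1+\beta_1$, a short arithmetic check shows that at least one of $\pa_\ca\bigl(\pa_a\Delta^{A-1}\psi^1\bigr)$ or $\pa_a\bigl(\pa_\ca\Delta^{A-1}\psi^1\bigr)$ exhibits $\Delta^A\psi^1$ as the divergence of an $\calL$-acceptable form; the refined assertions (i), (ii) are vacuous here. So I would fix $\sigma\ge2$ and assume Proposition~\ref{main.prop} in all lower degrees.

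First I would reduce the un-refined claim for a general $L(\Psi)$ to the refined claim for \emph{trace-free} $L(\Psi)$. Given a term of $L(\Psi)$ containing a trace $\Delta\Psi^j$, I would write $\Psi^j_{a\ca\calK}=\pa_\ca\Psi^j_{a\calK}$ (or $\pa_a\Psi^j_{\ca\calK}$, choosing the direction in which $\Psi^j$ is non-minimal, with a separate case analysis when $\Psi^j$ is minimal in both directions) and expand by the Leibnitz rule to write the term as $\pa_\ca D_a$ minus a linear combination of $\calL$-acceptable terms with strictly fewer traces, where $D_a$ is an $\calL$-acceptable $(1,0)$-form. Subtracting $\pa_\ca D_a$ from $L(\Psi)$ preserves all the hypotheses --- it still integrates to zero, has degree $\sigma$, and has order $d>0$ --- while lowering the total number of traces, so iterating reduces everything to trace-free $L(\Psi)$, and it then suffices to prove (i) and (ii).

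For trace-free $L(\Psi)$ with $d>0$ and a fixed pair $i\ne j$, the main engine is a descending induction on five integers attached to $L(\Psi)$: the degree $\sigma$ and the order $d$, together with three auxiliary quantities (for instance the total weight, the largest number of derivatives borne by a single factor, and the number of contractions tying $\Psi^i$, $\Psi^j$ to the remaining factors) recording how sharply the derivatives concentrate on a chosen ``top-order'' factor $\Psi^1$ and how strongly $\Psi^i$ (and, under hypothesis (ii), the pair $\Psi^i,\Psi^j$) is entangled with the other factors. At each stage there would be two moves. The first is a \emph{normalization}: using the integration-by-parts identity that transfers a derivative off $\Psi^1$, schematically
\[
\contr\bigl(\Psi^1_{t\,\cdots}\,\Psi^k_{\ct\,\cdots}\,\cdots\bigr)=\pa_t\bigl[\pcontr\bigl(\Psi^1_{\cdots}\,\Psi^k_{\ct\,\cdots}\,\cdots\bigr)\bigr]-(\text{terms with }\pa_t\text{ relocated}),
\]
I would subtract explicit $\calL$-acceptable, trace-free divergences $\pa_\ca T_a(\Psi)+\pa_a T_\ca(\Psi)$ whose free indices avoid $\Psi^i$ (and $\Psi^j$ under (ii)), so as to drive the top-order terms of $L(\Psi)$ into a canonical ``core'' form. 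Here the K\"ahler setting diverges sharply from the conformal one: the two index types admit many trace-free divergences, so one cannot hope to annihilate the top-order piece outright, only to normalize it. The second move is an \emph{extraction}: I would invoke the local divergence formula $\Local_k L(\Psi)=0$ for a suitable factor $\psi^k$ distinct from the distinguished ones --- this holds formally by Theorem~\ref{second_main_thm} since $n\ge\sigma-1$ --- pass to the sublinear combination carrying the maximal number of traces in the factor that absorbed the transferred derivatives (which again vanishes formally, the trace-count of each factor being invariant under the permitted formal operations), erase those traces, and reverse the integrations by parts exactly as in \S\ref{int-by-part-sec}. This produces a new identity $\int\wt L(\Psi)=0$ for an $\calL$-acceptable invariant $\wt L$ with a strictly smaller value of the controlling parameter; the inductive hypothesis then makes $\wt L$ a divergence of the required type, and unwinding the construction shows the core of $L(\Psi)$ is one as well. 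Adding back the normalizing divergences from the first move yields the decomposition for $L(\Psi)$.

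The hard part, I expect, will be the two-sided bookkeeping of this five-parameter induction: one must simultaneously choose the normalizing divergences to be $\calL$-acceptable, trace-free, and compatible with the prescribed placement of the free indices (and, under (ii), free of $\Psi^i_\cs\Psi^j_s$ contractions), \emph{and} verify that the invariant $\wt L$ extracted from the local divergence formula really has a strictly smaller value of the controlling integer, so that the recursion terminates. The two genuinely delicate points are that the ``top-order factor'' can migrate during normalization, which constantly threatens $\calL$-acceptability and forces the explicit divergences of \S\ref{dgt0} to be designed with care, and that every formal operation used (Leibnitz expansion, erasing indices, reversing integrations by parts) must be tracked against the five-parameter bookkeeping so that no step inflates an earlier parameter. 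Small-$\sigma$ configurations (e.g. $\sigma=2$ under the hypotheses of (ii), where the constraints force $L(\Psi)\equiv0$) need to be checked separately as degenerate cases.
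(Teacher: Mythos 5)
Your overall strategy --- normalize the top-order terms by subtracting explicit $\calL$-acceptable divergences, then extract a lower-weight integral identity from a local divergence formula $\Local_kL(\Psi)=0$ by isolating a maximal-trace sublinear combination, erasing, and reversing the integrations by parts so that an inductive hypothesis applies --- is essentially the architecture of the paper's proof (its Propositions \ref{prop.no.trace}--\ref{prop.final.step}), and your remarks about why one can only normalize rather than annihilate the top-order piece in the K\"ahler setting are on target. But there is a genuine gap in your very first reduction, the passage to trace-free $L(\Psi)$. The Leibnitz manipulation $\Psi^j_{a\ca\calK}=\pa_\ca\Psi^j_{a\calK}-(\cdots)$ produces an $\calL$-acceptable form $D_a$ only when the factor carrying the trace is \emph{non-minimal} in the relevant direction (this is exactly Lemma \ref{lem:internal-trace}, which indeed needs no integral hypothesis). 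After iterating it, you are left with terms in which every factor carrying a trace is \emph{minimal} --- e.g.\ $\Delta^2\psi^1$ or $\pa_{a\cb}\Delta\psi^1$ when $(\alpha_1,\beta_1)=(2,2)$ --- and for these no choice of direction makes the stripped factor acceptable, since removing an index drops it below the restriction list. The hypothesis $d>0$ does not save you: it only guarantees \emph{some} factor is non-minimal, not the one with the trace. Your ``separate case analysis when $\Psi^j$ is minimal in both directions'' is therefore not a corner case but the main content of this step: the paper must erase the offending minimal factor entirely, prove (via $\Local_2L(\Psi)=0$ and a careful identification of which terms can produce the factors $\Delta^2\psi^1$, $\pa_a\Delta^2\psi^1$, etc.) that the erased expression still integrates to zero, apply the inductive hypothesis at lower weight, and then reattach the factor and clean up with Lemma \ref{lem:internal-trace}. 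Without this, the reduction to the trace-free refined statement does not go through.

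A secondary concern is your choice of the \emph{degree} $\sigma$ as the outermost induction parameter. In several of the key extraction steps (notably the final step, where one erases only contractions between existing factors, not whole factors) the degree of the extracted invariant $\wt L$ is unchanged; what strictly decreases in every extraction is the \emph{weight}, which is why the paper runs its outer induction on $w$ alone and reserves the multi-parameter bookkeeping ($M_1,\dots,M_5$; $M,k,\tau,\alpha_\sigma,b_\sigma,N$) for selecting which sublinear combination to treat next, not for guaranteeing termination. Your scheme could likely be repaired by promoting the weight to the lexicographically dominant position, but as written the termination of your five-parameter recursion is not established.
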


Recall  that a trace is a contraction within a factor; see \S\ref{Notation-subsec}.

\begin{definition}  
In the setting of (i) above, we will call the factor $\Psi^i$ the {\em special factor}, and in (ii), we call $\Psi^j$ the {\em second special factor}.  A contraction between these factors $\Psi^i_\cs\Psi^j_s$ is called a {\em special contraction}.
\end{definition}

In most instances below, the special factor will be $\Psi^1$ and the second special factor (whenever applicable) will be $\Psi^\sigma$\!.

\subsection{Three main steps in the proof} 
We will prove the Proposition  by an induction on the weight $w$ of $L(\Psi)$: Assuming that Proposition \ref{main.prop} is true for all weights $w'< w$, we will prove it is true for the weight $w$.

There are three key steps in this proof, which we highlight as separate Propositions. The common assumption here is: 
\beq\label{assn}
\begin{aligned}
&\text{$L(\Psi)$ integrates to zero in a dimension $n$ with $\ge\sigma-1$ and}\\
&\text{has order $d>0$.}
\end{aligned}
\eeq

\begin{proposition}
\label{prop.no.trace}
Assume \eqref{assn}. Then there exist $\calL$-acceptable forms $T_a(\Psi), T_\ca(\Psi)$ so that: 
\beq
L(\Psi)=\pa_\ca T_a(\Psi)+\pa_a T_{\ol{a}}(\Psi)+L^\sharp(\Psi),
\eeq
where $L^\sharp(\Psi)$ stands for a new linear combination of  $\calL$-acceptable complete contractions  which contain no traces.
\end{proposition}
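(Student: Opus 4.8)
The plan is to strip the traces from $L(\Psi)$ one at a time, modulo divergences of $\calL$-acceptable one-forms, using integration by parts. I would argue by induction on the total number of traces occurring among the terms of $L(\Psi)$: write $L(\Psi)=L^\sharp(\Psi)+L'(\Psi)$, where $L'(\Psi)$ collects precisely the terms containing at least one trace; the base case is $L'(\Psi)=0$, and in general it is enough to show that every term $C^l(\Psi)$ of $L'(\Psi)$ equals, modulo a divergence $\pa_\ca T_a(\Psi)+\pa_a T_\ca(\Psi)$ of $\calL$-acceptable one-forms, an $\calL$-acceptable linear combination with strictly fewer traces.

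The basic move is as follows. Suppose $C^l(\Psi)$ carries a trace on the factor $\Psi^j$, on an index pair $t,\ct$; write $C^l(\Psi)=g^{t\ct}\,\pa_t\!\big(\pa_{\ct\calK}\psi^j\big)\cdot Q^{\ol\calK}$, where $\calK$ are the remaining derivative indices of $\psi^j$ and $Q^{\ol\calK}$ is the partial contraction of the other factors, with free indices $\ol\calK$ matching $\calK$. Transferring $\pa_t$ by the Leibniz rule gives
\[
C^l(\Psi)=\pa_t\big[\,g^{t\ct}\,\pa_{\ct\calK}\psi^j\cdot Q^{\ol\calK}\,\big]-g^{t\ct}\,\pa_{\ct\calK}\psi^j\cdot\pa_t Q^{\ol\calK}.
\]
The first term is the divergence of a one-form; in the second term the index $\ct$ of $\Psi^j$ is now contracted against an index on a \emph{different} factor, so after expanding $\pa_t$ by Leibniz over the factors of $Q$, every resulting complete contraction has one fewer trace than $C^l(\Psi)$ and no new trace appears. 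Thus, as long as everything stays $\calL$-acceptable, the inductive hypothesis finishes the argument. Acceptability is automatic when $\Psi^j$ is not a minimal factor: transferring the trace index of the type in which $\Psi^j$ has a spare derivative (holomorphic if $A_j>\alpha_j$, otherwise antiholomorphic) leaves the reduced factor $\calL$-acceptable. So I would first use this move to clear all traces sitting on non-minimal factors.

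What remains — and what I expect to be the crux — is a trace on a \emph{minimal} factor $\Psi^j$ (which forces $\alpha_j\ge1$, $\beta_j\ge1$), where the move above would push $\Psi^j$ below its restriction. This is exactly where the hypothesis $d>0$ from \eqref{assn} is needed: some factor $\Psi^k$ is then non-minimal and, since all remaining traces lie on minimal factors, trace-free; transferring one of its spare derivatives by Leibniz produces, among other terms, one in which that derivative has landed on $\Psi^j$, rendering $\Psi^j$ non-minimal so that the basic move applies to it, while the divergence generated is $\calL$-acceptable because $\Psi^k$ was non-minimal. The delicate point is making this terminate: the ``spare-derivative budget'' measured by $d$ is conserved under such transfers, and one must show it can always be routed to a minimal traced factor in finitely many steps — by fixing an ordering of the factors and tracking a lexicographic quantity such as (number of traces, number of minimal traced factors, location of the available non-minimal factor), or by feeding the local divergence formula $\Local_kL(\Psi)=0$ (which holds formally since $n\ge\sigma-1$) into the minimal-factor case. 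Checking that every one-form assembled along the way is genuinely $\calL$-acceptable is the other item that needs care.
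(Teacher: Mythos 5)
Your first move is exactly the paper's Lemma \ref{lem:internal-trace}: a trace on a \emph{non-minimal} factor can be traded for a divergence plus terms with one fewer trace on that factor, purely formally, and this part of your argument is fine. The gap is in the minimal-factor case, and it is not a technical loose end but the actual content of the proposition. Your plan there is to route a spare derivative from a non-minimal factor $\Psi^k$ onto the minimal traced factor $\Psi^j$ by another Leibniz/divergence step. But when you detach an index $t$ from $\Psi^k$, its partner $\ct$ sits on some \emph{other} factor $\Psi^m$, and in the Leibniz expansion of the resulting divergence the term where $\pa_t$ lands back on $\Psi^m$ carries a \emph{new} trace on $\Psi^m$; meanwhile the terms where $\pa_t$ lands on factors other than $\Psi^j$ leave $\Psi^j$ minimal and traced. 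So the total trace count is not monotone under this move, your induction on the number of traces breaks, and the proposed lexicographic monovariant is never specified. I do not believe a purely formal routing scheme can work: note that your argument nowhere uses the hypothesis that $L(\Psi)$ integrates to zero, yet for configurations such as $\Delta^2\psi^1\cdot\Delta^2\psi^2\cdot(\text{non-minimal trace-free part})$ the minimal factors are completely self-contracted and no amount of derivative shuffling detaches their traces without the partner-index problem above recurring.

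What the paper actually does for a minimal traced factor (say $\Delta^2\psi^1$, or $\pa_{a\cb}\Delta\psi^1$, and then the factors $\Delta^*\psi^{k_i}$ in Step~3) is to \emph{erase the entire traced minimal factor}, use the local divergence formula $\Local_2L(\Psi)=0$ (which holds formally since $n\ge\sigma-1$) to isolate the sublinear combination producing that factor, and conclude that the erased, lower-weight expression \emph{still integrates to zero}. One then invokes the inductive hypothesis of Proposition \ref{main.prop} \emph{on the weight} to write the erased expression as a divergence $\pa_\cc\wt T_c+\pa_c\wt T_\cc$, multiplies back by the erased factor, and absorbs the resulting correction terms (in which the reinserted factor has been differentiated and is therefore non-minimal) using Lemma \ref{lem:internal-trace}. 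This is where the integral hypothesis and the global induction on weight enter, and it is precisely the mechanism your proposal gestures at ("feeding the local divergence formula into the minimal-factor case") without carrying out. As written, your argument establishes only the non-minimal half of the proposition.
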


Proposition \ref{prop.no.trace} will be proven in \S\ref{Pf.no.trace}. Observe that if we can show this, we are reduced to showing Proposition \ref{main.prop} under the additional assumption that there is no trace in any term in $L(\Psi)$.  Our next Proposition applies to that setting.

\begin{proposition}
\label{prop.no.special}
Assume \eqref{assn} and that there are no  traces in any term in $L(\Psi)$. Then there exist $\calL$-acceptable forms $T_a(\Psi), T_\ca(\Psi)$ so that
\beq
L(\Psi)=\pa_\ca T_a(\Psi)+\pa_a T_{\ol{a}}(\Psi)
+L^\flat(\Psi),
\eeq
the free indices $a,\ca$ do not belong to $\Psi^1$, and  $L^\flat(\Psi)$ stands for a new linear combination of $\calL$-acceptable complete contractions which contain no  traces and no special contractions $\Special$.
\end{proposition}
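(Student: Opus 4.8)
The plan is to prove Proposition~\ref{prop.no.special} by a downward induction on the number $M$ of special contractions $\Special$ appearing among the terms of $L(\Psi)$, carried out inside the induction on the weight $w$ under which Proposition~\ref{main.prop} is being established, so that Proposition~\ref{main.prop} — in its strengthened forms (i)--(ii) — is available at all weights strictly below $w$, as is Proposition~\ref{prop.no.trace} at weight $w$. If $M=0$ there is nothing to do: set $L^\flat=L$. If $M\ge 1$, write $[M\|L(\Psi)]$ for the sublinear combination of terms carrying exactly $M$ special contractions; it suffices to produce $\calL$-acceptable forms $T_a,T_\ca$ with free indices off $\Psi^1$ such that $L(\Psi)-\pa_\ca T_a-\pa_a T_\ca$ is again $\calL$-acceptable, trace-free, and contains at most $M-1$ special contractions, since iterating this step exhausts the special contractions and leaves exactly an $L^\flat$ of the asserted kind.

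For the main reduction I would use the erasing technique of \S\ref{int-by-part-sec}, adapted to special contractions. Write down $\Local_1L(\Psi)=0$, the local divergence formula for the special factor $\Psi^1$, which holds formally since $n\ge\sigma-1$. Inside it, isolate the sublinear combination in which the $M$ antiholomorphic derivatives that originally effected the special contractions are all forced onto $\Psi^\sigma$ — where, because $L(\Psi)$ is trace-free, they must appear as $M$ traces on $\Psi^\sigma$ — and argue, exactly as in the proof of the Lemma of \S\ref{int-by-part-sec} (the number of traces in a factor being invariant under formal operations), that this piece vanishes formally. Erasing those $M$ traces and reversing the integrations by parts produces a new invariant $\wt L(\Psi)$ that integrates to zero in dimension $n$, has weight $w-M<w$ and degree $\sigma$, and contains no special contractions. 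Provided $\wt L$ still has positive order, the inductive hypothesis — Proposition~\ref{main.prop} at weight $w-M$, in forms (i) and (ii) with $\Psi^1$ the special and $\Psi^\sigma$ the second special factor — writes $\wt L=\pa_\ca\wt T_a+\pa_a\wt T_\ca$ with $\wt T_a,\wt T_\ca$ trace-free, free of special contractions, and with free indices belonging to neither $\Psi^1$ nor $\Psi^\sigma$. The final move is to re-dress: re-insert into $\wt T_a,\wt T_\ca$ the $M$ index pairs erased from $\Psi^1$ and $\Psi^\sigma$ as contractions between these two factors, obtaining $T_a,T_\ca$, and to check that $\pa_\ca T_a+\pa_a T_\ca$ reproduces $[M\|L(\Psi)]$ modulo $\calL$-acceptable trace-free terms with strictly fewer special contractions.

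I expect this matching to be the genuine obstacle, for two reasons. First, the erasing map is not injective on formal contractions, so re-dressing $\wt T_a,\wt T_\ca$ and then differentiating does not automatically return $[M\|L(\Psi)]$: the Leibniz expansion of $\pa_\ca T_a$ scatters the new derivative over all factors, including $\Psi^1$ and $\Psi^\sigma$ which now carry the re-inserted indices, and one must show that the ``wrong'' placements contribute only terms with fewer special contractions while the ``right'' placements assemble to $[M\|L(\Psi)]$ with correct coefficients; pinning this down appears to need a further, finer induction on several auxiliary parameters (how the $M$ special slots are distributed, how many of the remaining indices of $\Psi^1,\Psi^\sigma$ are themselves cross-contracted, which factor carries the free index, etc.). Second, the most naive compensating divergence — obtained by opening one special contraction of a term, deleting a $\ca$ from $\Psi^1$ and promoting the matching $a$ on $\Psi^\sigma$ to the free index — produces, via the Leibniz term in which $\pa_\ca$ strikes $\Psi^\sigma$, a complete contraction with a trace on $\Psi^\sigma$; these trace terms must be peeled off and re-expressed as $\calL$-acceptable divergences with free indices off $\Psi^1$ (via companion divergences built on $\Psi^\sigma$), a secondary cascade which terminates because each step strictly lowers the lexicographic pair (number of special contractions, number of traces). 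Keeping $L^\flat$ trace-free throughout is precisely what makes this bookkeeping delicate.

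A few degenerate configurations require direct, ad hoc treatment. When $\Psi^1$ or $\Psi^\sigma$ is a minimal factor one cannot move an index off it without breaking $\calL$-acceptability; but then the number of available antiholomorphic slots on $\Psi^1$ (resp. holomorphic slots on $\Psi^\sigma$) is at most two, so either $M$ is automatically bounded and the needed divergences are written by hand, or one first reduces to the non-minimal case by an explicit construction. And when erasing the $M$ special contractions lowers the order of $\wt L$ to zero, Proposition~\ref{main.prop} no longer applies: here one invokes Proposition~\ref{mainprop}(b) instead — an order-zero invariant integrating to zero in dimension $n\ge\sigma-1$ is a Chern polynomial — and checks that re-dressing such a $\wt L$ contributes nothing beyond permitted divergences and lower-order special terms.
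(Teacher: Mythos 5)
Your overall strategy --- iterate on the number $M$ of special contractions, use the local divergence formula $\Local_1L(\Psi)=0$ to manufacture a lower-weight integral equation with the special contractions erased, invoke the inductive assumption of Proposition \ref{main.prop} there, and re-dress --- is the same as the paper's, and you have correctly flagged the re-dressing and the minimal-factor configurations as the delicate points. But the central step of your argument has a genuine gap. You propose to isolate inside $\Local_1L(\Psi)$ ``the sublinear combination in which the $M$ antiholomorphic derivatives that originally effected the special contractions are all forced onto $\Psi^\sigma$.'' This is not a formally well-defined sublinear combination: the only features one may use to split a formal identity into separately vanishing pieces are those invariant under the formal operations (the number of traces in a given factor, the type of each factor, etc.), and a trace on $\Psi^\sigma$ in $\Local_1L(\Psi)$ arises not only from a special contraction $\Special$ but equally from a contraction of the opposite orientation $\Psi^1_a\Psi^\sigma_\ca$ whose derivative lands on $\Psi^\sigma$. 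Hence the terms of $\Local_1L(\Psi)$ carrying exactly $M$ traces on $\Psi^\sigma$ are contaminated by contributions from terms of $L(\Psi)$ with \emph{fewer} than $M$ special contractions, so the piece you want neither vanishes separately nor reconstructs $[M\Special\|L(\Psi)]$ after erasing and reversing the integrations by parts. The paper resolves exactly this by first freezing $A_\sigma=\alpha_\sigma$ and $B_\sigma=b_\sigma$ at their extremal values, maximizing the number $N$ of contractions $\Psi^1_a\Psi^\sigma_\ca$, and only then picking out the terms with $M+N$ traces on a factor $\pa^{(\alpha_\sigma+N,b_\sigma+M)}\psi^\sigma$; with all of these constraints in place one can prove (Claim \ref{lem.new.int}) that this sublinear combination comes exclusively from $L^{N,b_\sigma,\alpha_\sigma,M}(\Psi)$ in the prescribed way.

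There is also a structural mismatch worth noting: in the paper the integral-equation machinery is not used to reduce $M$ at all. When $B_1>\beta_1$ and $A_\sigma>\alpha_\sigma$, the reduction of $M$ is achieved by the purely explicit construction of Lemma \ref{lem-non-minimal} (erase the index $\ca$ from $\Psi^1$ and promote the matching $a$ on $\Psi^\sigma$ to a free index), which needs no integral equation; the integral-equation arguments (Lemmas \ref{lem.decr.A.Bad1} and \ref{lem.decr.A.Bad2}, which erase the \emph{entire} factor $\Psi^\sigma$, not just the special index pairs, and invoke Proposition \ref{main.prop}(i) rather than (ii)) serve only to push $A_\sigma$ and $B_1$ strictly above their minimal values while keeping $M$ fixed, after which the explicit construction applies. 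Your plan inverts this, attempting to remove all $M$ special contractions in one stroke via the inductive hypothesis at weight $w-M$; besides the well-definedness problem above, this forces you to erase $M$ indices from a possibly minimal $\Psi^1$ or $\Psi^\sigma$, and your treatment of that case (``written by hand'') is precisely where the bulk of the paper's proof lives.
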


Proposition \ref{prop.no.special} will be proven in \S \ref{Pf.no.special}. Observe that if we can show this, we are reduced to showing Proposition \ref{main.prop} under the additional assumption that there are no  traces, {\it and} no special contractions, in any term in $L(\Psi)$. 

\begin{proposition}
\label{prop.final.step}
Assume \eqref{assn} and that there are no traces  and no special contractions $\Special$ in any term in $L(\Psi)$. Then there exist $\calL$-acceptable forms $T_a(\Psi), T_{\ol{a}}(\Psi)$ such that 
\beq
L(\Psi)=\pa_\ca T_a(\Psi)+\pa_a T_{\ol{a}}(\Psi).
\eeq
Moreover, $T_a(\Psi)$, $T_\ca(\Psi)$ can be chosen so that  any of their terms  satisfy the following conditions:
\begin{itemize}
\item
The free indices $a$ and $\ca$ do not belong to either $\Psi^1$ or $\Psi^\sigma;$
\item
There are no traces and no special contractions $\Special$.
\end{itemize}
\end{proposition}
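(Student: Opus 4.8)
The plan is to run the same outer induction on the weight $w$ that underlies Proposition \ref{main.prop}: assuming the full statement of Proposition \ref{main.prop} (conclusions (i) and (ii) included) for every weight $w' < w$, we establish Proposition \ref{prop.final.step} in weight $w$. The only place where the hypothesis that $L(\Psi)$ integrates to zero can enter is the local divergence formula, so the first step is to write out $\Local_1 L(\Psi) = 0$ — the formal identity produced by integrating by parts all derivative indices off the special factor $\Psi^1$ and expanding by the Leibniz rule. Because $L(\Psi)$ has been normalized to contain no traces and no special contractions $\Special$, the mechanism of \S\ref{int-by-part-sec} applies to $\Local_1 L(\Psi)$: one extracts a sublinear combination that — by a trace-counting statistic preserved under the permissible formal moves — must vanish formally on its own, erases the indices recording that statistic, and re-integrates to obtain a genuine integral equation $\int \wt L(\Psi) = 0$ whose terms have strictly smaller weight (the one trivial exception, in which $\Psi^1$ carries no derivative index, is disposed of directly). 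The outer inductive hypothesis then presents $\wt L(\Psi)$ as $\pa_\ca \wt T_a(\Psi) + \pa_a \wt T_\ca(\Psi)$ with $\wt T_a, \wt T_\ca$ $\calL$-acceptable, trace-free, free of special contractions, and with their free indices on neither relevant special factor.

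The core of the argument is to \emph{reverse} this passage and peel a divergence off $L(\Psi)$ itself. The operation ``apply $\Local_1$, extract the canonical sublinear combination, erase, re-integrate'' is not invertible, so at each stage one must choose carefully which sublinear combination of $L(\Psi)$ to treat and must control the errors created when an honest divergence of $L$ is replaced by the reconstruction coming from $\wt T_a, \wt T_\ca$. This bookkeeping is governed by a lexicographically ordered induction on five non-negative integer parameters attached to the terms of $L(\Psi)$ — roughly: the number of factors $\sigma$; the total number of derivative indices on the special factor $\Psi^1$; how many of those contract against the second special factor $\Psi^\sigma$; how many contract against $\Psi^\sigma$ and the intermediate factors jointly; and a ``length'' recording the largest number of intermediate factors an index emanating from $\Psi^1$ must pass through before reaching $\Psi^\sigma$. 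For fixed values of the leading parameters one isolates the corresponding extremal sublinear combination $[\,\cdot\,\|\,L(\Psi)]$, uses the reversed local divergence formula to manufacture $\calL$-acceptable one-forms $T_a(\Psi), T_\ca(\Psi)$ — built so that $a,\ca$ lie on neither $\Psi^1$ nor $\Psi^\sigma$ and so that no traces or special contractions are introduced — whose divergence reproduces that sublinear combination modulo terms strictly lower in the five-parameter order (or of strictly smaller weight, handled by the outer hypothesis), subtracts $\pa_\ca T_a(\Psi) + \pa_a T_\ca(\Psi)$, and iterates. When all five parameters reach their base values the residual combination is either empty or a manifest divergence with the stated properties.

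Several points need care, and together they are the main obstacle. First, every pair $T_a(\Psi), T_\ca(\Psi)$ produced along the way must be verified to be $\calL$-acceptable: subtracting its divergence must never strip a minimal factor of a derivative, and here the hypotheses $d > 0$ and the freedom in distributing the Leibniz derivatives are essential. Second, one must check that the erasing step and its reversal never recreate a trace or a special contraction — else the normalizations achieved in Propositions \ref{prop.no.trace} and \ref{prop.no.special} would be undone — and that the errors generated at each stage are genuinely lower in the five-parameter order, so that the induction terminates. Third, and this is the feature with no analogue in the conformal case, every ``completion'' move that turns a partial contraction into a divergence must be carried out separately on the holomorphic and the antiholomorphic index slots, because a one-form-valued invariant leaves free a single index \emph{of one type}; producing $T_a$ and $T_\ca$ in tandem, rather than a single one-form, is exactly what forces the five-parameter refinement, and I expect the delicate combinatorics to concentrate there.
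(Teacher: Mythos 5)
There is a genuine gap: you have the right scaffolding (induction on weight, a local divergence formula followed by extraction of an extremal sublinear combination, erasure and re-integration, a five-parameter lexicographic descent, and the acceptability/no-new-traces checks), but the engine that actually isolates the problematic terms is misidentified. You propose to work with $\Local_1L(\Psi)$ and a trace-counting statistic. For this proposition that cannot work: after Propositions \ref{prop.no.trace} and \ref{prop.no.special} the obstruction is no longer located in direct $\Psi^1$--$\Psi^\sigma$ contractions (the special ones are gone, and the remaining direct ones can be handled by explicit constructions when factors are non-minimal). The obstruction lives in \emph{chains through an intermediate factor}, i.e.\ in the contractions $\Specialonek$ and $\Specialks$ for $k\in\{2,\dots,\sigma-1\}$, and the relevant conserved quantity is the \emph{joint} count $\Pi_l(k)=\Pi'_l(k)+\Pi''_l(k)$ (joint because the explicit divergence subtractions trade one type for the other while preserving the sum). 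Integrating by parts on $\psi^1$ leaves the $\Specialks$ contractions completely untouched, so no statistic read off from $\Local_1L(\Psi)$ — traces in $\Psi^\sigma$ or otherwise — can detect $\Pi_l(k)$. The paper instead applies $\Local_kL(\Psi)=0$ for the intermediate factor and counts the \emph{special contractions} $\Special$ created: both halves of a chain collapse into $\Special$ exactly when the $\Specialonek$ indices are forced onto $\Psi^\sigma$ and the $\Specialks$ index onto $\Psi^1$, and since $L(\Psi)$ contains no special contractions to begin with, the sublinear combination with $M$ of them is unambiguous and vanishes formally on its own. That is the step your proposal cannot reproduce.

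A second, related omission is the normalization step. Because only the sum $\Pi'_k+\Pi''_k$ is detected, the erasing procedure is not well defined until every extremal term has been brought to a canonical distribution ($\Pi''_k=1$, Definition \ref{normalized}); arranging this occupies Lemma \ref{normalize}, the explicit Procedures I and II, and the two auxiliary lemmas handling minimal factors — and it is there, not in any holomorphic/antiholomorphic ``tandem completion,'' that the five-parameter induction ($M$, $k$, $\tau$, the type of $\Psi^\sigma$, and $N$) actually lives. Your proposed parameters (including a ``chain length'' through several intermediate factors) do not match the quantities that are simultaneously preserved by the divergence subtractions and readable from $\Local_kL(\Psi)$, so the descent you describe has no mechanism to terminate.
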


Proposition \ref{prop.final.step} will be proven in \S\ref{Pf.final.step}. 

\subsection{Proof of Proposition \ref{prop.no.trace}}
\label{Pf.no.trace}
We first introduce a procedure to remove traces from non-minimal factors.   Note that the following lemma does not use the assumption on the integral.

\begin{lemma}\label{lem:internal-trace}
Let $C(\Psi)$ be a complete contraction with a non-minimal factor $\Psi^i$ with at least one trace. Then there are $\calL$-acceptable forms $T_a$ and $T_\ca$ such that
$$
C(\Psi)-\pa_\ca T_a(\Psi)-\pa_a T_\ca(\Psi)=L'(\Psi),
$$
where $L'(\Psi)$ is a linear combination of partial contractions such that $\Psi^i$ in each term is minimal unless it has no traces; moreover the numbers of traces of all the other factors do not change. 
\end{lemma}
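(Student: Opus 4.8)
The plan is to prove Lemma~\ref{lem:internal-trace} by a purely formal (local) computation — the integral hypothesis plays no role — and by induction on the order $A_i+B_i-\alpha_i-\beta_i$ of the distinguished factor $\Psi^i$. The key point is that a single trace on $\Psi^i$, together with the fact that $\Psi^i$ is non-minimal, lets us write $C(\Psi)$ as a divergence plus terms in which $\Psi^i$ has strictly smaller order and exactly one fewer trace. Iterating this reduction until $\Psi^i$ is either minimal or trace-free yields the conclusion.

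In detail, fix a term $C(\Psi)$ in which $\Psi^i$ is non-minimal and carries a trace, and write that traced pair of indices on $\Psi^i$ as $t$ (holomorphic) and $\ct$ (antiholomorphic), so that $\Psi^i$ occurs as $\Psi^i_{t\ct\calK}$ with $t$ contracted against $\ct$, where $\calK$ collects the remaining indices of $\Psi^i$, contracted against the other factors exactly as in $C(\Psi)$. Since $\Psi^i$ is non-minimal we have $A_i>\alpha_i$ or $B_i>\beta_i$. Suppose first $A_i>\alpha_i$. Form the $(0,1)$-form $U_\ca(\Psi)$ by deleting the holomorphic derivative $t$ from $\Psi^i$ — so that factor becomes $\pa^{(A_i-1,B_i)}\psi^i$ with the index $\ct$ now free — and keeping all other factors and all other contractions of $C(\Psi)$ unchanged; this is $\calL$-acceptable because $A_i-1\ge\alpha_i$. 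Now expand $\pa_a U_\ca(\Psi)$ by the Leibniz rule, contracting the divergence index $a$ against the free index $\ct$. The term in which $\pa_a$ lands on $\Psi^i$ restores the deleted derivative, and the new index $a$ is contracted against $\ct$, so that term is exactly $C(\Psi)$. In every other Leibniz term, $\pa_a$ lands on some other factor $\Psi^j$, which thereby acquires one extra derivative contracted against the index $\ct$ on $\Psi^i$; in such a term $\Psi^i$ has exactly one fewer holomorphic derivative and exactly one fewer trace than in $C(\Psi)$, and the number of traces in every other factor is unchanged, because the migrating derivative on $\Psi^j$ is contracted against $\Psi^i$, not against another index of $\Psi^j$. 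Hence $C(\Psi)-\pa_a U_\ca(\Psi)$ is a linear combination of such correction terms. When instead $A_i=\alpha_i$ (so necessarily $B_i>\beta_i$) one uses the mirror construction: delete the antiholomorphic derivative $\ct$, obtaining a $(1,0)$-form $U_a(\Psi)$, and expand $\pa_\ca U_a(\Psi)$ the same way.

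To finish, iterate the construction with the \emph{same} distinguished index $i$: each correction term either has $\Psi^i$ minimal, or has $\Psi^i$ trace-free, or has $\Psi^i$ still non-minimal and still carrying a trace, and in the last case we apply the construction again to that term. Since the order of $\Psi^i$ strictly decreases at every step, the process terminates. Collecting all the forms $U_\ca$ and $U_a$ produced along the way (with their signs) into $T_a(\Psi),T_\ca(\Psi)$ — these are $\calL$-acceptable since each was built by deleting one derivative from a non-minimal factor — and collecting the terminal correction terms into $L'(\Psi)$, we obtain $C(\Psi)-\pa_\ca T_a(\Psi)-\pa_a T_\ca(\Psi)=L'(\Psi)$, where in each term of $L'(\Psi)$ the factor $\Psi^i$ is minimal unless it has no traces, and the trace counts of all factors other than $\Psi^i$ agree with those in $C(\Psi)$ because no step ever creates a trace outside $\Psi^i$.

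There is no genuine obstacle here; the argument is bookkeeping. The only points requiring attention are: verifying that deleting one derivative from a non-minimal factor preserves $\calL$-acceptability; checking that the ``diagonal'' Leibniz term really reproduces $C(\Psi)$ with the trace correctly reconstructed; and confirming that a derivative migrating off $\Psi^i$ always lands on a different factor and is immediately contracted back against $\Psi^i$, so that the last clause of the lemma (invariance of the other factors' trace counts) holds. The induction is on the order of $\Psi^i$ alone, which is why the procedure terminates.
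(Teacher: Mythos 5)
Your proposal is correct and follows essentially the same route as the paper: in both cases one opens the trace on $\Psi^i$ by deleting one derivative from the non-minimal side to form an acceptable one-form with the conjugate index left free, recovers $C(\Psi)$ as the diagonal Leibniz term of its divergence, and observes that the remaining Leibniz terms lower both the order and the trace count of $\Psi^i$ while leaving the other factors' traces untouched. The iteration on the order of $\Psi^i$, which you spell out in more detail than the paper's ``repeating these procedures,'' is exactly the intended argument.
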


\begin{proof}
For  simplicity of notation, we assume $i=1$. Then $C(\Psi)$ is of the form
$$
\contr\big(\Delta\pa^{(A_1-1,B_1-1)}\psi^1\cdot\pcontr(\Psi')\big).
$$
If $A_1>\alpha_1$, then replacing $\Delta$ by $\pa_\ca$, we set
$$
T_\ca=\pcontr\big(\pa_\ca\pa^{(A_1-1,B_1-1)}\psi^1\cdot\pcontr(\Psi')\big).
$$
Then $T_\ca$ is $\calL$-acceptable and we have
$$
C(\Psi)-\pa_a T_\ca=
-\contr\big(\pa_\ca\pa^{(A_1-1,B_1-1)}\psi^1\cdot\pa_a\pcontr(\Psi)\big).
$$
The right-hand side has fewer  holomorphic derivatives and fewer traces for $\Psi^1$; moreover, the numbers of traces for other factors is preserved. 
If $B_1>\beta_1$, we can apply the same procedure with $\pa_a$ replaced by $\pa_\ca$.  Repeating these procedure, we obtain the lemma.
\end{proof}

From this point onwards, we may assume with no loss of generality that in any term in $L(\Psi)$ with a trace in $\Psi^1$, the factor $\Psi^1$ must be minimal. We show that we can simplify $L(\Psi)$ iteratively. This is done in steps:
\medskip

{\em Step 1: Remove double trace for $\psi^1$.}
Let $[\Delta^2\psi^1\|L(\psi)]$ be the sublinear combination consisting of terms with a factor $\Delta^2\psi^1$, which exists only when $\alpha_1=\beta_1=2$. (If there is no such term, we skip this step.) Then
define $[\erase\psi^1\|\Delta^2\psi^1\|L(\psi)]$ to stand for the new linear combination that arises by erasing $\Delta^2\psi^1$ from  $[\Delta^2\psi^1\|L(\psi)]$. Note that this procedure does not change the order $d$ as $\Delta^2\psi^1$ is minimal. We claim 
\beq\label{int_reduced_eq}
\int [\erase\psi^1\|\Delta^2\psi^1\|L(\psi)]=0.
\eeq
If this equation holds, since $[\erase\psi^1\|\Delta^2\psi^1\|L(\psi)]$ has lower weight, we may invoke the inductive assumption of Proposition \ref{main.prop}, to derive that there exist $\calL$-admissible forms $\wt T_a$ and $\wt T_\ca$ such that 
$$
[\erase\psi^1\|\Delta^2\psi^1\|L(\psi)]=\pa_\ca\wt T_a+\pa_a\wt T_\ca.
$$
Then setting $T_a=\Delta^2\psi^1\cdot\wt T_a$ and $T_\ca=\Delta^2\psi^1\cdot\wt T_\ca$, we have
$$
[\Delta^2\psi^1\|L(\psi)]=\pa_\ca T_a+\pa_a T_\ca-
\partial_{\ca}\Delta^2\psi^1\cdot \wt T_a-\partial_a\Delta^2\psi^1\cdot 
\wt T_\ca.
$$
Thus, using Lemma \ref{lem:internal-trace}, we may remove a trace in $\partial_A\Delta^2\psi^1$ (and obtain $\partial_{a\cb}\Delta\psi^1$). 

To prove \eqref{int_reduced_eq}, we consider the local equation $\Local_2 L(\Psi)=0$. Pick out the  sublinear combination $[\Delta^2\psi^1\|\Local_2 L(\Psi)]$ in $\Local_2 L(\Psi)$ consisting of the terms that contain the  factor $\Delta^2\psi^1$. Such terms arise only when we move all derivatives on $\psi^2$ to factors $\Psi^j$, $j\ne 1$. Erasing $\Delta^2\psi^1$ from $[\Delta^2\psi^1\|\Local_2(\Psi)]$ and reversing the procedure of integration by part for $\psi^2\cdot[\erase\psi^1\|\Delta^2\psi^1\|\Local_2(\Psi)]$,  we obtain \eqref{int_reduced_eq}.

\medskip
\noindent
{\em Step 2: Remove a single trace from $\psi^1$.}
Now we assume that there is no term with a double trace $\Delta^2\psi^1$.  We next pick out the sublinear combination $[\Delta\psi^1\|L(\psi)]$ consisting of 
the terms with the factor $\pa_{a\cb}\Delta\psi^1$ (such a term exists only when $\alpha_1=\beta_1=2$; the case $\min\{\alpha_1,\beta_1\}=1$ will be discussed
later).  If there is no such term, one can skip this step. Erasing $\pa_{a\cb}\Delta\psi^1$ from $[\Delta\psi^1\|L(\psi)]$ we define  $[\erase\psi^1\|\Delta\psi^1\|L(\psi)]_{\ca b}$, where $\ca b$ which  were contracted against $\Psi^1$ are left as free indices. Then we have
\beq\label{Bad1from}
[\Delta\psi^1\|L(\psi)]=
\pa_{a\cb}\Delta\psi^1 \cdot[\erase\psi^1\|\Delta\psi^1\|L(\psi)]_{\ca b}.
\eeq
\begin{claim}\label{claim-int-eq-wtL}
The tensor-valued integral equation holds:
\beq\label{int-eq-wtL}
\int [\erase\psi^1\|\Delta\psi^1\|L(\psi)]_{\ca b}=0.
\eeq
\end{claim}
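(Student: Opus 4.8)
The plan is to mimic the argument used in Step 1, but now tracking a single trace $\Delta\psi^1$ rather than a double trace. I would start from the local divergence formula $\Local_2 L(\Psi)=0$, which holds formally by Theorem \ref{second_main_thm} since $n\ge\sigma-1$. The key observation is the same bookkeeping principle exploited repeatedly in \S\ref{int-by-part-sec}: the number of traces in a given factor is invariant under the formal operations that witness a formal identity. By assumption (coming out of the reduction via Lemma \ref{lem:internal-trace} and the conclusion of Step 1), $\Psi^1$ in $L(\Psi)$ carries at most one trace, and when it does, $\Psi^1$ is minimal, i.e. $\Psi^1=\pa_{a\cb}\Delta\psi^1$ exactly; in particular no term has $\Delta^2\psi^1$. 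So after integrating $\int L(\Psi)=0$ by parts off the factor $\psi^2$ to produce $\Local_2 L(\Psi)=0$, I would isolate the sublinear combination $[\Delta\psi^1\|\Local_2 L(\Psi)]$ of terms in which $\Psi^1$ still appears as the minimal factor $\pa_{a\cb}\Delta\psi^1$ (with the derivative indices $a,\cb$ contracted somewhere in $\Psi'=(\Psi^3,\dots,\Psi^\sigma)$, not hitting $\Psi^2$). As in Step 1, these terms in $\Local_2 L(\Psi)$ arise exactly when all of the derivatives produced by the integration by parts on $\psi^2$ land on the factors other than $\Psi^1$; no other term in $\Local_2 L(\Psi)$, and no formal manipulation of any other term, can produce or cancel a $\pa_{a\cb}\Delta\psi^1$ factor, since the trace count on $\Psi^1$ is preserved. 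Hence $[\Delta\psi^1\|\Local_2 L(\Psi)]=0$ formally.

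Next I would perform the formal erasing: drop the factor $\pa_{a\cb}\Delta\psi^1$ from every term of $[\Delta\psi^1\|\Local_2 L(\Psi)]$, leaving $\ca,b$ as free indices — these are precisely the terms assembled in $[\erase\psi^1\|\Delta\psi^1\|L(\psi)]_{\ca b}$ but with the $\pa_{\calJ}$ structure coming from the unintegrated derivatives of $\psi^2$. Because $[\Delta\psi^1\|\Local_2 L(\Psi)]=0$ holds formally, the erased version is again a formal identity (tensor-valued in the free indices $\ca, b$, and still a valid identity after substitution in dimension $n$). Then, exactly as in the reversal of integration by parts described at the end of \S\ref{int-by-part-sec}, I would multiply this local equation by $\pa_{a\cb}\Delta\psi^1$, contract the indices $a,\cb$ against the free $\ca,b$, integrate over $\bC^n$, and integrate by parts in reverse, forcing all the derivatives on $\psi^2$ back onto $\psi^2$. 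Since $\Psi^1$ was minimal — so there are no derivative indices on $\Psi^1$ beyond the ones making up $\Delta\psi^1$ and the pair $a,\cb$ — nothing obstructs this reversal, and the resulting integrand is exactly $[\erase\psi^1\|\Delta\psi^1\|L(\psi)]_{\ca b}$ paired against $\pa_{a\cb}\Delta\psi^1$; dropping that factor back out of the integral (equivalently, noting the identity holds for all $\psi^1$) gives \eqref{int-eq-wtL}.

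The step I expect to require the most care is verifying that the sublinear combination $[\Delta\psi^1\|\Local_2 L(\Psi)]$ is \emph{clean}, i.e. that it really does vanish formally in isolation. This requires knowing precisely which terms of $\Local_2 L(\Psi)$ can contain the factor $\pa_{a\cb}\Delta\psi^1$: one must rule out the possibility that, when Leibniz-expanding $\pa_{\calI}\pcontr(\Psi')$ in the local divergence formula, a derivative lands on $\Psi^1$ and artificially creates or destroys a trace, or creates an extra pair of derivatives on $\Psi^1$ making it non-minimal. The reduction to "$\Psi^1$ minimal whenever it has a trace" via Lemma \ref{lem:internal-trace}, together with the absence of $\Delta^2\psi^1$ terms from Step 1, is exactly what makes this work: the only way to get back a $\pa_{a\cb}\Delta\psi^1$ factor is the one described. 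After that, the formal-erasing and reverse-integration-by-parts are routine applications of the machinery of \S\ref{int-by-part-sec}, and the lowered weight $w-3$ of $[\erase\psi^1\|\Delta\psi^1\|L(\psi)]_{\ca b}$ is what will subsequently let us invoke the inductive hypothesis of Proposition \ref{main.prop}.
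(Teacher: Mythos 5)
Your opening moves match the paper's: both start from $\Local_2L(\Psi)=0$, isolate the sublinear combination whose $\Psi^1$-factor is exactly $\pa_{a\cb}\Delta\psi^1$ (which vanishes separately because the formal operations preserve the type and trace count of each factor), and then erase that factor. But there are two genuine problems with the rest. First, you have not accounted for the terms of $[\Delta\psi^1\|L(\Psi)]$ in which the indices $a$ or $\cb$ of $\Psi^1$ are contracted against $\Psi^2$. For such terms the corresponding derivatives produced by the integration by parts off $\psi^2$ may land on $\Psi^1$, producing the factors $\pa_a\Delta^2\psi^1$, $\pa_\cb\Delta^2\psi^1$, $\Delta^3\psi^1$; your parenthetical ``(with $a,\cb$ contracted somewhere in $\Psi^3,\dots,\Psi^\sigma$, not hitting $\Psi^2$)'' tacitly excludes exactly these terms, for which the free indices $\ca,b$ of $[\erase\psi^1\|\Delta\psi^1\|L(\psi)]_{\ca b}$ sit \emph{on $\Psi^2$ itself}. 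The paper isolates all four factors of \eqref{step2M3terms}, erases each, and reassembles them into the local equation \eqref{new-loc} precisely to handle this case.

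Second, and more seriously, your reversal step is not a valid operation. You propose to multiply the erased local identity by $\pa_{a\cb}\Delta\psi^1$, contract $a,\cb$ against the free indices $\ca,b$, integrate, and then ``drop that factor back out of the integral.'' Contracting away the free indices before reversing re-introduces $\Psi^1$ into the integrand, so the subsequent integrations by parts no longer cleanly restore $\Psi^2$ (the derivatives also fall on $\Psi^1$); and even granting the outcome, what you would obtain is the scalar statement $\int\pa_{a\cb}\Delta\psi^1\cdot[\erase\psi^1\|\Delta\psi^1\|L(\psi)]_{\ca b}=0$, i.e.\ (after integrating by parts off $\psi^1$) the single local equation $\Delta\pa_{a\cb}[\erase\psi^1\|\Delta\psi^1\|L(\psi)]_{\ca b}=0$. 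This is far weaker than the tensor-valued equation \eqref{int-eq-wtL}, and ``noting the identity holds for all $\psi^1$'' does not upgrade it, since the integral only sees the erased expression through that one contraction pattern. The correct reversal --- which is what the paper carries out --- is to multiply the erased, still tensor-valued local equation by the \emph{undifferentiated} $\psi^2$, integrate over $\bC^n$, and integrate by parts to move the derivatives $\pa_{\calI}$ back onto $\psi^2$, keeping $\ca$ and $b$ free throughout; this rebuilds $\Psi^2$ (with $\ca$ or $b$ landing on it when appropriate) and yields \eqref{int-eq-wtL} directly.
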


In fact, since we seek to apply the inductive assumption of Proposition \ref{main.prop}, we do not wish to work with a tensor-valued integral equation, but a scalar-valued variant. To derive this, we consider the integrand $ [\erase\psi^1\|\Delta\psi^1\|L(\psi)]_{\ca b}$ and break it up into sublinear combinations 
\[
\label{break-up}
[\erase\psi^1\|\Delta\psi^1\|L(\psi)]^k_{\ca b},\quad k=1,\dots, K,
\] 
depending on where the two free indices $\ca, b$ belong: Two terms in $ [\erase\psi^1\|\Delta\psi^1\|L(\psi)]_{\ca b}$ belong to the same sublinear combination $ [\erase\psi^1\|\Delta\psi^1\|L(\psi)]^k_{\ca b}$ if and only if the free indices $\ca, b$ belong to the same factors $\Psi^i,\Psi^j$.  We then define $[\erase\psi^1\|\Delta\psi^1\|L(\psi)]^k_{\erase \ca \!\erase{\, b}}$ to be the  \emph{complete} contraction that arises by formally \emph{erasing} the free indices $\pa_\ca, \pa_b$ from the factors $\Psi^i,\Psi^j$. Note that the complete contractions that we obtain are $\calL\rq{}$-acceptable, with respect to a \emph{new} restriction list:  if 
\[
\calL=(\alpha_1,\beta_1,\dots, \alpha_\sigma, \beta_\sigma)
\]
and the free indices $\ca,b$ belonged to factors $\Psi^i,\Psi^j$, then
\[
\calL\rq{}=(\alpha_2,\beta_2,\dots, \alpha_i,\beta'_i,\dots, \alpha'_j,\beta_j,\dots,\alpha_\sigma,\beta_\sigma),
\]
where $\beta\rq{}_i=\max\{\beta_i-1,0\}$, $\alpha'_i=\max\{\alpha_i-1,0\}$. We  {\em also} claim the scalar-valued integral equation:
\beq\label{int-eq-wtL-NEW}
\int [\erase\psi^1\|\Delta\psi^1\|L(\psi)]^k_{\erase \ca \!\erase{\, b}}=0.
\eeq

Since each $[\erase\psi^1\|\Delta\psi^1\|L(\psi)]^k_{\erase \ca \!\erase{\, b}}$ has  lower weight, by the inductive assumption of Proposition \ref{main.prop} to derive that there exist $\calL\rq{}$-admissible forms (for $\psi'=(\psi^2,\dots,\psi^\sigma)$) $\wt T^k(\psi')_{c}$  and $\wt T^k(\psi')_{\cc}$ so that
\beq
\label{eq1}
[\erase\psi^1\|\Delta\psi^1\|L(\psi)]_{\erase \ca \!\erase{\, b}}^k=\pa_\cc \wt T^k_{c}(\psi')
+\pa_c \wt T^k_{\cc}(\psi').
\eeq
Now, for each $k\in \{1,\dots, K\}$, we can add free indices $\pa_\ca, \pa_b$ to the factors $\Psi^i, \Psi^j$ from which they were erased. (Note that the tensor fields that are produced then are $\calL$-acceptable if we multiply them by $\pa_{a\cb}\Delta\psi_1$.) This produces a new true equation, of the form: 
\beq
\label{eq2}
[\erase\psi^1\|\Delta\psi^1\|L(\psi)]^k_{\ca b}=\pa_\cc \wt T^k_{c\ca b}(\psi')
+\pa_c \wt T^k_{\cc\,\ca b}(\psi').
\eeq
We now add  the above equations for $k\in \{1, \dots, K\}$ to derive: 
\beq\label{eq3}
[\erase\psi^1\|\Delta\psi^1\|L(\psi)]_{\ca b}=\pa_\cc \wt T_{c\ca b}(\psi')
+\pa_c \wt T_{\cc\,\ca b}(\psi').
\eeq

Then setting $T_c(\psi)=\pa_{a\cb}\Delta\psi^1\cdot \wt T_{cb\ca}(\psi')$ and $T_\cc(\psi)=\pa_{a\cb}\Delta\psi^1\cdot \wt T_{\cc b\ca}(\psi')$, we have
$$
\begin{aligned}[]
[\Delta\psi^1\|L(\psi)]=&\pa_\cc  T_{cb\ca}(\psi')
+\pa_c  T_{\cc b\ca}(\psi')\\
&-\pa_{ca\cb}\Delta\psi^1\cdot T_{\cc b\ca}(\psi')
-\pa_{\cc a\cb}\Delta\psi^1\cdot T_{cb\ca}(\psi').
\end{aligned}
$$
Since $\pa_{ca\cb}\Delta\psi^1$  and $\pa_{\cc a\cb}\Delta\psi^1$ are not minimal, we may apply Lemma \ref{lem:internal-trace} to the last line and remove the trace in $\Psi^1$ by subtracting a divergence.  

If $(\alpha_1, \beta_1)=(2,1)$, then we can apply the same argument for $\pa_a\Delta\psi^1$.  Similarly for the case $(\alpha_1, \beta_1)=(1,2)$. If $(\alpha_1, \beta_1)=(1,1)$, we can apply the argument of Step 1 to $\Delta\psi^1$ in place of $\Delta^2\psi^1$.

\begin{proof}[Proof of Claim \ref{claim-int-eq-wtL}]
To prove \eqref{int-eq-wtL} (and, in fact, the equations \eqref{int-eq-wtL-NEW}), we consider the local equation $\Local_2L(\Psi)=0$.  Pick out the terms in $\Local_2L(\Psi)$ with the following factors:
\beq\label{step2M3terms}
\pa_{a\cb}\Delta\psi^1,\quad
\pa_a\Delta^2\psi^1,\quad
\pa_\cb\Delta^2\psi^1,\quad
\Delta^3\psi^1.
\eeq
By construction, such factors can only arise from the terms in $[\Delta\psi^1\|L(\Psi)]$: terms containing the first factor arise only when 
all the derivatives on $\psi^2$ are forced to hit only factors $\Psi^j, j\ne 1$.\footnote{In other words, they are not allowed to hit the factor 
$\Psi^1$.} The latter three factors arise, respectively, by the integration by part of the pairs
$$
\pa_{a\cb}\Delta\psi^1 \Psi^2_b,\quad
\pa_{a\cb}\Delta\psi^1\Psi^2_\ca,\quad
\pa_{a\cb}\Delta\psi^1\Psi^2_{b\ca},
$$
where the derivatives on $\Psi^2$ omitted here are only allowed to hit factors $\Psi^j, j\ne 1$. To derive our new integral equations, we consider the local equation 
$$
[\Delta\psi^1\|\Local_2L(\Psi)]=0
$$
and pick out the sublinear combinations containing the factors in \eqref{step2M3terms}. These four sublinear combinations vanish separately, since $[\Delta\psi^1\|\Local_2L(\Psi)]$ vanishes formally as it has degree $\sigma-1\le n$. In these resulting four equations, we 
\emph{erase} the factors \eqref{step2M3terms} and make the indices $(\ca, b)$, $\ca$, $b$ free respectively for the first three. We denote the resulting expressions, respectively, by $I_{b\ca}(\Psi'')$, $I_{\ca}(\Psi'')$, $I_{b}(\Psi'')$, $I(\Psi'')$; these also vanish separately. We then consider the new local equation 
\beq
\label{new-loc}
\pa_{b\ca}\psi^2I(\Psi'')+\pa_\ca\psi^2 I_b(\Psi'')+\pa_a\psi^2 I_\cb(\Psi'')+
\psi^2_{a\cb} I(\Psi'')=0.
\eeq
Integrating this and reversing the process of integrations by parts for $I_{b\ca}, I_b, I_\cb, I$, we derive the integral equation \eqref{int-eq-wtL}. To derive  \eqref{int-eq-wtL-NEW} we just break up \eqref{new-loc} into sublinear combinations that vanish separately, according to how many free indices a term contains and which factors they belong to. These sublinear combinations must vanish separately. Integrating the resulting local equations yields  \eqref{int-eq-wtL-NEW}. 
\end{proof}

We make a technical remark here, which is helpful in the rest of this proof.

\begin{remark}
\label{KEY} 
The argument above proceeded by deriving the new system of integral equations \eqref{int-eq-wtL-NEW} and then applying the inductive assumption of Proposition \ref{main.prop} to each of those. The result is the system of local equations \eqref{eq2}. 

However, the system of integral equations \eqref{int-eq-wtL-NEW} also yields the tensor-valued integral equation \eqref{int-eq-wtL}, and the new local equations we derive in \eqref{eq2} is equivalent to the tensor-valued local equations \eqref{eq3}. On a formal level, we could then say that we apply the inductive assumption of Proposition \ref{main.prop} to the (tensor-valued) integral equation \eqref{int-eq-wtL} to derive \eqref{eq3}. But the proof of this proceeds via the \emph{system} of integral equations \eqref{int-eq-wtL-NEW} and the \emph{system} of local equations \eqref{eq2}. 

Nonetheless, since this argument is very general, we will adopt the language convention of invoking the inductive assumption of Proposition  \ref{main.prop} to a tensor-valued integral equation, to derive a tensor-valued local equation. While strictly speaking Proposition \ref{main.prop} is only applicable to scalar-valued integral equations,  in all instances where we invoke this below, it can be proven by way of thinking of the system of the tensor-valued equations involved as a system of scalar-valued equations. The strict proof of how this is done follows exactly the lines adopted here (and in fact is easier in general). We skip this technical point on the instances below for the reader's convenience, in order for the main ideas to not be obscured by this  technical point.
\end{remark}

{\em Step 3: Remove traces for $\psi^k\!,$ $k\ge2$.}
We may now assume with no loss of generality that there are no traces in $\Psi^1$ in any term in $L(\Psi)$. We next remove traces from all factors $\psi^k$, $k\ge2$. In view of Lemma \ref{lem:internal-trace}, we can assume that any factor with a trace is minimal. Such a factor is of the form
$$
\Delta^2\psi^k,\quad \pa_*\Delta\psi^k,
$$
where $*$ stands for indices of type $(\alpha_k-1,\beta_k-1)$.  We denote the numbers of these factors (when $k$ runs through $2,\dots,\sigma$) in a term respectively by $\gamma_1(l),\gamma_2(l)$, $l\in\La$. Let $M_1$ be the maximum of $\gamma_1$ among the terms in $L(\Psi)$ and denote by 
$$
[M_1\Delta^2\psi\|L(\Psi)]
$$
the sublinear combination with $\gamma_1=M_1$. (If $M_1=0$, we set $[M_1\Delta^2\psi\|L(\Psi)]=L(\Psi)$; we always use this convention in this subsection.)
Then let $M_2$ be the maximum of $\gamma_2$ among the term of $[M_1\Delta^2\psi\|L(\Psi)]$ and denote by 
$$
L^{M_1,M_2}(\Psi)=[M_2\Delta\psi\|M_1\Delta^2\psi\|L(\Psi)]
$$ 
the sublinear combination of $[M_1\Delta^2\psi\|L(\Psi)]$ with $\gamma_2=M_2$. We claim that we can choose $\calL$-acceptable forms $T_a$ and $T_\ca$ so that
\beq\label{LMM-claim}
L^{M_1,M_2}(\Psi)-\pa_\ca T_a-\pa_a T_\ca=L'(\Psi),
\eeq
where each term of $L'(\Psi)$ has smaller $(\gamma_1,\gamma_2)$ in  the lexicographical order, i.e., $\gamma_1<M_1$ or $\gamma_1=M_1$ but  $\gamma_2<M_2$. Clearly, if we can show this, then by iterative repetition we can derive our claim.

To prove the claim \eqref{LMM-claim},  we study the contractions between $\Psi^1$ and factors $\Delta\Psi^k$ with $k\ge 2$. There are three possible types of such contractions (irrelevant indices are omitted):
$$
\Psi_{a\cb *}^1\pa_{b\ca}\Delta\psi^k,
\quad
\Psi_{a*}^1\pa_{\ca*}\Delta\psi^k,
\quad
\Psi_{\ca *}^1\pa_{a*}\Delta\psi^k.
$$
here $*$ stands for the indices (possibly be empty) that do not give a contraction between $\Psi^1\Psi^k$. 
For each term of $L^{M_1,M_2}(\Psi)$, we count the numbers of these pairs; they are respectively denoted by $\gamma_3,\gamma_4,\gamma_5$.
Let $M_3$ be the maximum of $\gamma_3$ among the terms in $L^{M_1,M_2}(\Psi)$ and denote by
\beq\label{M3L}
[M_3\Psi^1_{a\cb}\pa_{b\ca}\Delta\psi\|L^{M_1,M_2}(\Psi)]
\eeq
the sublinear combination with $\gamma_3=M_3$.  Let $M_4$ be the maximum of $\gamma_4$ among the terms in \eqref{M3L}, and denote the sublinear combination with $\gamma_4=M_4$ by
\beq\label{M4L}
[M_4\Psi^1_{a}\pa_{\ca}\Delta\psi\|M_3\Psi^1_{a\cb}\pa_{b\ca}\Delta
\psi\|L^{M_1,M_2}(\Psi)].
\eeq
Finally, let $M_5$ be the maximum of $\gamma_5$ among the terms in \eqref{M4L}. Take the sublinear combination with $\gamma_5=M_5$ and set
\beq\label{M5L}
L^{M_1,M_2,\dots,M_5}(\Psi)=[M_5\Psi^1_{\ca}\pa_a\Delta\psi\|M_4\Psi^1_a\pa_{\ca}\Delta\psi\|M_3\Psi^1_{a\cb}\pa_{b\ca}\Delta\psi\|L^{M_1,M_2}(\Psi)].
\eeq

Now we pick the $m$ $(=M_1+M_2)$ factors with contractions in a term of \eqref{M5L}:
\beq\label{psilist}
\Delta^*\psi^{k_1},\Delta^*\psi^{k_2},\dots,\Delta^*\psi^{k_m}.
\eeq
This defines a subset $\{k_1,k_2,\dots,k_m\}\subset\{2,\dots,\sigma\}$. We split $L^{M_1,M_2,\dots,M_5}(\Psi)$ according to $\{k_1,k_2,\dots,k_m\}$: the sublinear combination with the factors \eqref{psilist} is denoted by
\beq\label{M6L}
L^{M_1,M_2,\dots,M_5}
_{k_1,k_2,\dots,k_m}(\Psi).
\eeq
From this expression, we erase the terms in \eqref{psilist}; the indices that contracted against these factors are left as free indices. The resulting expression is denoted by
\beq\label{eraseM5L}
[\Delta\erase\psi\|L^{M_1,M_2,\dots,M_5}
_{k_1,k_2,\dots,k_m}(\Psi)].
\eeq
Note that each term is a partial contraction of factors $\Psi''=(\Psi^{j_1},\dots,\Psi^{j_t})$,
where 
$$
\{j_1,j_2,\dots, j_t\}=\{1,2,\dots,\sigma\}\setminus\{k_1,k_2,\dots,k_m\}.
$$

\begin{claim} \label{lem.intM5L} The following integral equation holds:
\beq\label{intM5L}
\int
[\Delta\erase\psi\|L^{M_1,M_2,\dots,M_5}
_{k_1,k_2,\dots,k_m}(\Psi)]
=0.
\eeq
\end{claim}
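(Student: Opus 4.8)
\medskip
\noindent\textit{Proof plan.}\
The plan is to derive \eqref{intM5L} by the same mechanism that produced the earlier reduced integral equations---the lemma of \S\ref{int-by-part-sec} and, especially, the proof of Claim \ref{claim-int-eq-wtL}---but at the more refined level dictated by the parameters $M_1,\dots,M_5$. Starting from $\int L(\Psi)=0$, one applies a local divergence formula, isolates inside the resulting local equation the sublinear combination whose ``signature'' exactly records the factors $\Delta^*\psi^{k_1},\dots,\Delta^*\psi^{k_m}$ that are to be erased, observes that this piece vanishes formally and hence separately, erases those factors (leaving the indices that were contracted against them as free indices), and finally reverses the integrations by parts. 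Since $n\ge\sigma-1$ and every local equation encountered has degree at most $\sigma-1\le n$, Theorem \ref{second_main_thm} guarantees that each such ``signature'' sublinear combination indeed vanishes \emph{formally}.

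More concretely, I would apply the local divergence formula to the functions $\psi^{k_1},\dots,\psi^{k_m}$, one at a time (exactly as $\Local_2$ is used in the proof of Claim \ref{claim-int-eq-wtL}), and in the resulting local equations pick out the sublinear combination in which \emph{each} of the factors $\psi^{k_1},\dots,\psi^{k_m}$ is still minimal, still carries precisely the traces recorded by $M_1$ and $M_2$, and still exhibits the pattern of contractions against $\Psi^1$ recorded by $M_3,M_4,M_5$. Minimality of a factor, its number of traces, and---since $\Psi^1$ is a distinguished factor of the polarization and may not be interchanged with any other---its number of contractions of each given type against $\Psi^1$, are all preserved by the formal operations; hence this isolated sublinear combination vanishes separately and formally. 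Erasing the $m$ factors $\Delta^*\psi^{k_i}$ then produces a (tensor-valued) local equation among partial contractions of the surviving factors $\Psi^{j_1},\dots,\Psi^{j_t}$, with the former contraction partners left as free indices.

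To conclude, I would re-introduce the erased functions as generic test functions carrying at most $(1,1)$-type derivatives (just as \eqref{new-loc} is assembled from the four intermediate equations in the proof of Claim \ref{claim-int-eq-wtL}), integrate over $\bC^n$, split the result according to how many free indices each term carries and which factors they lie on so that the pieces vanish separately (as in the passage to \eqref{int-eq-wtL-NEW}), and then reverse the integrations by parts so that every free derivative is driven back onto the corresponding test function; reassembling the pieces yields \eqref{intM5L}. By Remark \ref{KEY} it is legitimate to run this argument directly with the tensor-valued equation rather than its scalar-valued avatars. The step I expect to be the crux---and the reason the induction in this section carries five parameters---is verifying that the isolated sublinear combination is fed \emph{only} by the terms of $L^{M_1,M_2,\dots,M_5}_{k_1,k_2,\dots,k_m}(\Psi)$: a priori a derivative coming off another factor could land on one of the $\Delta\Psi^k$ factors, or the contractions against $\Psi^1$ could be reorganized, manufacturing terms of exactly the right shape out of other parts of $L(\Psi)$; it is precisely the successive maximality of $M_1,M_2,\dots,M_5$, together with the formal invariance of the counts $\gamma_1,\gamma_2,\gamma_3,\gamma_4,\gamma_5$, that excludes this and makes the extracted piece well defined.
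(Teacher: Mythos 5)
Your high-level template (apply a local divergence formula, isolate a formally-invariant sublinear combination that therefore vanishes separately, erase the designated factors, reverse the integrations by parts) is the right one, but the specific mechanism you propose does not close, and the gap is exactly at the point you flag as the crux. The paper's proof applies $\Local_1$ — integration by parts on the \emph{distinguished} factor $\psi^1$ — precisely so that the contraction pattern between $\Psi^1$ and the minimal trace-factors $\Delta^*\psi^k$ is converted into the \emph{types and trace counts} of the resulting factors on $\psi^k$ (the lists (I)--(IV) of minimal versus $(\alpha_k,\beta_k+1)$, $(\alpha_k+1,\beta_k)$, $(\alpha_k+1,\beta_k+1)$ factors, counted by $\alpha,\beta,\gamma,\delta,\mu$). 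Once $\Psi^1$ has been eliminated, the successive maximality of $M_1,\dots,M_5$ does bound these counts, because a minimal factor with traces cannot be \emph{created} by Leibniz expansion and each non-minimal trace configuration requires a specific pre-existing $\Psi^1$--$\Delta\psi^k$ contraction. Your plan instead keeps $\Psi^1$ in play and tries to isolate directly by the counts $\gamma_3,\gamma_4,\gamma_5$. (Taken literally, "apply the local divergence formula to $\psi^{k_1},\dots,\psi^{k_m}$" is a non-starter, since $\Local_{k_i}$ removes the very factor whose minimality and traces you want to track; the only viable reading is $\Local_j$ for some $j\notin\{1,k_1,\dots,k_m\}$, which moreover need not exist.)

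With that reading, the isolated sublinear combination is \emph{not} fed only by $L^{M_1,\dots,M_5}_{k_1,\dots,k_m}(\Psi)$, and maximality does not rescue you. In $\Local_j L(\Psi)$ a derivative $\pa_A$ coming off $\Psi^j$, whose partner $\ol A$ sits on some $\Delta^*\psi^{k_i}$, can land on $\Psi^1$: this manufactures a brand-new contraction $\Psi^1_A\Delta^*\psi^{k_i}_{\ol A}$ while leaving $\Delta^*\psi^{k_i}$ minimal with its traces intact. Thus a term of $L^{M_1,M_2}(\Psi)$ with $(\gamma_3,\gamma_4,\gamma_5)=(M_3,M_4,M_5-1)$ (or one where a $\gamma_4$-type contraction is upgraded to a $\gamma_3$-type configuration, i.e.\ $(M_3-1,\ast,\ast)$) produces, after the Leibniz expansion, terms carrying the signature $(M_3,M_4,M_5)$ you are selecting. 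The lexicographic maximality of $(M_3,M_4,M_5)$ only constrains the counts of terms \emph{in $L(\Psi)$}, not of terms generated by the integration by parts, so these contaminating contributions are not excluded and your extracted local equation is not the one you need. The fix is the paper's: use $\Local_1$, so that no new $\Psi^1$-contractions can ever be created (the factor is gone), and read off $\gamma_3,\gamma_4,\gamma_5$ from the counts $\mu,\gamma,\delta$ of the specific non-minimal trace configurations on the $\psi^k$'s, which \emph{are} controlled by the five-fold maximality.
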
 

Before proving the claim, let us complete the proof of Proposition \ref{prop.no.trace} using \eqref{intM5L}. Since the integrand \eqref{intM5L} has lower weight and has no trace, by the inductive assumption of Proposition \ref{main.prop} (see Remark \ref{KEY}), we can find $\wt T_a(\Psi''), \wt T_\ca(\Psi'')$ without traces such 
that
$$
[\Delta\erase\psi\|L^{M_1,M_2,\dots,M_5}
_{k_1,k_2,\dots,k_m}(\Psi)]=
\pa_\ca \wt T_a(\Psi'')+\pa_a \wt T_\ca(\Psi'').
$$
We now put back the factors
$$
\Delta^*\psi^{k_1},\Delta^*\psi^{k_2},\dots,\Delta^*\psi^{k_m}
$$
into $[\Delta\erase\psi^*\|L^{M_1,M_2,\dots,M_5}_{k_1,k_2,\dots,k_m}(\Psi)]$. We also put these factors into $\wt T_a(\Psi'')$, $\wt T_\ca(\Psi'')$  and define $T_a(\Psi)$, $T_\ca(\Psi)$.  Then we obtain
$$
L^{M_1,M_2,\dots,M_5}
_{k_1,k_2,\dots,k_m}(\Psi)=
\pa_\ca T_a(\Psi)+\pa_a T_\ca(\Psi)+L'(\Psi),
$$
where $L'(\Psi)$ is a linear combination of terms with $M_1$ double traces and $M_2$ single traces and exactly one non-minimal factor with trace.  Then we can apply Lemma \ref{lem:internal-trace} to each term of $L'(\Psi)$ and reduce $M_1$ or $M_2$. This completes the inductive step.

\begin{proof}[Proof of Claim \ref{lem.intM5L}]
We consider the local equation $\Local_1L(\Psi)=0$. From the terms in $\Local_1L(\Psi)$, we pick up the factors in one of the forms:

\medskip

\noindent
(I) Minimal factors with traces $\alpha_k,\beta_k\in\{1,2\}$:
$$
\Delta^2\psi^k,\quad
\pa_{a\cb}\Delta\psi^k,
\quad
\pa_a\Delta\psi^k,
\quad
\pa_\ca\Delta\psi^k,
\quad
\Delta\psi^k.
$$
(II) Non-minimal factor of type $(\alpha_k,\beta_k+1)$:
$$
\begin{aligned}
&
\pa_\cb\Delta^2\psi^k\ \text{for }(\alpha_k,\beta_k)=(2,2),\\
&\Delta^2\psi^k\ \text{for }(\alpha_k,\beta_k)=(1,2).
\end{aligned}
$$
(III) Non-minimal factor of type $(\alpha_k+1,\beta_k)$: 
$$
\begin{aligned}
&
\pa_a\Delta^2\psi^k\ \text{for }(\alpha_k,\beta_k)=(2,2),\\
&\Delta^2\psi^k\ \text{for }(\alpha_k,\beta_k)=(2,1).
\end{aligned}
$$
(IV) Non-minimal factor of type $(\alpha_k+1,\beta_k+1)$:
$$
\Delta^3\psi^k\ \text{for }(\alpha_k,\beta_k)=(2,2).
$$
We count the number of these factors in each term $C(\Psi)$ of $\Local_1L(\Psi)$:

\medskip

$\alpha$: the number of  the factor $\Delta^2\psi^2$
in (I) in the term $C(\Psi)$;

$\beta$: the number of the factors 
$\pa_{a\cb}\Delta\psi^k,
\pa_a\Delta\psi^k,
\pa_\ca\Delta\psi^k,
\Delta\psi^k$ in (I);

$\gamma$: the numbers of the factors in (II) in the term $C(\Psi)$;

$\delta$: the numbers of the factors in (III) in the term $C(\Psi)$;

$\mu$: the numbers of the factors in (IV) in the term $C(\Psi)$.

\medskip

\noindent
We can tell how each factor above arises in the integration by parts. The factors in (I) are minimal and no derivative is applied in the integration by parts.  The factors in (II), (III), (IV) are respectively derived from the pairs
$$
\Psi^1_{a*}\pa_{\ca*}\Delta\psi^k,\quad
\Psi^1_{\ca*}\pa_{a*}\Delta\psi^k,\quad
\Psi^1_{b\ca*}\pa_{a\cb}\Delta\psi^k
$$
by moving the indices $\Psi^1$ to $\Psi^k$; here $*$ stands for the indices that do not give contraction between $\Psi^1\Psi^k$ and these derivatives on $\Psi^1$ are moved to factors $\Psi^j$, $j\ne k$. Note that these are the complete list of pairing of indices between $\Psi^1$ and minimal $\Psi^k$ with contraction.

By this observation we see that maximum of $\alpha$ among the terms in $\Local_1L(\Psi)$ is $M_1$. Let $[M_1\|\Local_1L(\Psi)]$ be the sublinear combination with $\alpha=M_1$.  Then consider the maximum of $\beta+\gamma+\delta+\mu$ among the terms in $[M_1\|\Local_1L(\Psi)]$.  It is $M_2$ and we denote the corresponding sublinear combination by
$$
[M_2\|M_1\|\Local_1L(\Psi)].
$$ 
We then take the maximum of $\mu$, which agrees with $M_3$, and denote the sublinear combination by 
$$
[M_3\|M_2\|M_1\|\Local_1L(\Psi)].
$$
We take the maximum of $\gamma$, which is $M_4$, and denote the sublinear combination by 
$$
[M_4\|M_3\|M_2\|M_1\|\Local_1L(\Psi)].
$$
Finally, we take the maximum of $\delta$, which is $M_5$, and denote the sublinear combination by 
$$
L^{M_1,M_2,\dots,M_5}_{1}(\Psi')=[M_5\|M_4\|M_3\|M_2\|M_1\|\Local_1L(\Psi)].
$$
This sublinear combination vanishes separately since ${\rm Local}_1L(\Psi)=0$  holds formally. Note that the numbers of the factors in the lists (I)--(IV) are
$$
\begin{aligned}
\alpha & =M_1,\quad 
\beta=M_2-M_3-M_4-M_5,
\\
\gamma & =M_4,\quad
\delta=M_5,\quad
\mu=M_3.
\end{aligned}
$$
Take a term of $L^{M_1,\dots,M_5}_{1}(\Psi')$ and we write the factors in (I)--(IV) as
\beq\label{Delta-psi-list}
\Delta^*\psi^{k_1},
\Delta^*\psi^{k_2},\dots,
\Delta^*\psi^{k_m}.
\eeq
Fix the list $\{k_1,k_2,\dots,k_m\}$ and pick up from $L^{M_1,\dots,M_5}_{1}(\Psi')$ the terms with factors of the form \eqref{Delta-psi-list}. The resulting sublinear combination is denoted by
$$
L^{M_1,\dots,M_5}_{1,k_1,\dots,k_m}(\Psi'),
$$
which also vanishes separately. Then erase the factors listed in \eqref{Delta-psi-list}; this gives a linear combination
$$
[\erase\psi^{k_1},\dots, \erase\psi^{k_m}\| L^{M_1,\dots,M_5}_{1,k_1,\dots,k_m}(\Psi')],
$$
which also vanishes. By reversing the procedure of integration by parts for 
$$
\psi^1[\erase\psi^{k_1},\dots, \erase\psi^{k_m}\| L^{M_1,\dots,M_5}_{1,k_1,\dots,k_m}(\Psi')]
$$
we obtain $[\Delta\erase\psi\|L^{M_1,M_2,\dots,M_5}_{k_1,k_2,\dots,k_m}(\Psi)]$ modulo divergence. (Recall by Remark \ref{KEY} we can equivalently 
derive a \emph{system} of scalar-valued integral equations.) Thus we get \eqref{intM5L}.\end{proof}

\subsection{Proof of Proposition \ref{prop.no.special}}
\label{Pf.no.special}

Throughout this section we start with $L(\Psi)$ satisfying the assumptions of Proposition \ref{prop.no.special}: 
\beq\label{assn3.5}
\begin{aligned}
&\bullet\ \text{$L(\Psi)$ integrates to zero in a dimension $n$ with $n\ge\sigma-1$;}\\
&\bullet\ \text{$L(\Psi)$  has order $d>0$;}\\
&\bullet\ \text{All terms in $L(\Psi)$ have no traces.}\\
\end{aligned}
\eeq
Let us denote by $M$ the maximum number of special contractions  $\Special$ among the terms $C^l(\Psi)$ in $L(\Psi)$.  Let us denote by $[M\Special\|L(\Psi)]$ the sublinear combination of $L(\Psi)$ with precisely $M$ special contractions. We then claim

\begin{lemma}
\label{decr.A}
There exist $\mc{L}$-acceptable forms $T_a(\Psi), T_{\ca}(\Psi)$ such that
\beq
\label{eq.decr.A}
[M\Special\|L(\Psi)]=\pa_\ca T_a(\Psi)+\pa_a T_\ca(\Psi)+L^{ M-}(\Psi),
\eeq
where $L^{M-}(\Psi)$  stands for a new linear combination of  $\mc{L}$-acceptable partial contractions which contain no traces and at most $(M-1)$ special contractions.
\end{lemma}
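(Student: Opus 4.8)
The plan is to argue by the same induction on the weight $w$ that organizes §\ref{dgt0}, so that Proposition \ref{main.prop} (in its strengthened form) may be assumed for all weights $w' < w$. I would split $[M\Special\|L(\Psi)]$ according to whether a special contraction can be ``differentiated down''.

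First I would treat the terms $C(\Psi)$ in which $\Psi^1$ is non-minimal in the antiholomorphic direction, $B_1 > \beta_1$ (the case $A_\sigma > \alpha_\sigma$ being symmetric). Choosing one special contraction $\Psi^1_\ca\Psi^\sigma_a$, I write $\Psi^1_\ca = \pa_\ca\Phi^1$ with $\Phi^1 = \pa^{(A_1,B_1-1)}\psi^1$ still $\calL$-acceptable, and use $C(\Psi) = \pa_\ca(\Phi^1\cdot Y_a) - \Phi^1\cdot\pa_\ca Y$, where $Y_a$ is obtained from $C(\Psi)$ by deleting $\Psi^1_\ca$ and freeing the holomorphic index $a$ on $\Psi^\sigma$. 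The first term is a divergence $\pa_\ca T_a$; in the second, $\pa_\ca$ either hits some $\Psi^j$, $j\neq1,\sigma$, giving a term with only $M-1$ special contractions, or hits $\Psi^\sigma_a$ to create a trace in the now non-minimal factor $\Psi^\sigma$, which Lemma \ref{lem:internal-trace} removes at the cost of one more divergence, again leaving terms with at most $M-1$ special contractions and no traces. Iterating, I reduce to the case where every term of $[M\Special\|L(\Psi)]$ has $\Psi^1$ minimal in the antiholomorphic direction and $\Psi^\sigma$ minimal in the holomorphic direction; in particular $M\le\min\{\beta_1,\alpha_\sigma\}\le2$.

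For these ``minimal'' terms I would use the local divergence formula $\Local_1 L(\Psi)=0$, which holds formally by Theorem \ref{second_main_thm} since $n\ge\sigma-1$. After integrating $\psi^1$ by parts, each special contraction $\Psi^1_\ca\Psi^\sigma_a$ can return $\pa_\ca$ onto $\Psi^\sigma_a$ and so contribute a trace in $\Psi^\sigma$. To separate this from the traces produced the same way by anti-special contractions $\Psi^1_a\Psi^\sigma_\ca$ and by contractions of $\Psi^1$ or $\Psi^\sigma$ with the other factors, I would run a nested maximization in the style of Step 3 of §\ref{Pf.no.trace}: maximize successively the number of special contractions ($=M$), the number of anti-special contractions, and then the numbers of the remaining admissible contraction types. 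This isolates a sublinear combination of $\Local_1 L(\Psi)$ that vanishes separately (being the maximal piece of a formally vanishing expression) and that arises from the corresponding maximal piece of $[M\Special\|L(\Psi)]$ only by sending the prescribed $\Psi^1$-derivatives onto $\Psi^\sigma$. Erasing the resulting traces in $\Psi^\sigma$, as in §\ref{int-by-part-sec}, gives a formally vanishing local equation in $\Psi' = (\Psi^2,\dots,\Psi^\sigma)$; integrating it against $\psi^1$ and reversing the integrations by parts yields a lower-weight integral equation $\int[\Delta\erase\psi\|\cdots]=0$ whose terms have no traces and no contractions between $\Psi^1$ and $\Psi^\sigma$. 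By the inductive hypothesis of Proposition \ref{main.prop} (in the tensor-valued form of Remark \ref{KEY}, with $\Psi^1$, $\Psi^\sigma$ as the special and second special factors), this invariant equals $\pa_\ca\wt T_a + \pa_a\wt T_\ca$; re-attaching the erased index-pairs to $\Psi^\sigma$ and clearing the reappearing traces by Lemma \ref{lem:internal-trace} recovers, modulo divergences, the chosen maximal piece of $[M\Special\|L(\Psi)]$ up to terms with at most $M-1$ special contractions. Iterating over the finitely many pieces produced by the nested maximization completes the argument.

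The step I expect to be the main obstacle is this last bookkeeping: since special contractions, anti-special contractions, and contractions of $\Psi^1$ or $\Psi^\sigma$ with third factors all produce or interact with traces in $\Psi^\sigma$ after the integration by parts, the naive ``sublinear combination with $M$ traces in $\Psi^\sigma$'' mixes terms with different numbers of special contractions, and only the multi-parameter nested maximization pins down the correct piece. One must also check that every correction term generated — reappearing traces removed by Lemma \ref{lem:internal-trace}, derivatives re-attached to $\Psi^1$, contractions displaced onto third factors — is either a divergence or has strictly fewer than $M$ special contractions, hence is absorbed into $L^{M-}(\Psi)$.
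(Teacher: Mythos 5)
Your overall architecture matches the paper's: handle the ``non-minimal'' terms by an explicit construction, then use $\Local_1L(\Psi)=0$, a nested maximization, and the inductive hypothesis of Proposition \ref{main.prop} for the rest. But your first step contains a genuine gap. You claim to dispose constructively of every term with $B_1>\beta_1$ \emph{or} $A_\sigma>\alpha_\sigma$, reducing to $M\le\min\{\beta_1,\alpha_\sigma\}$. Consider the case you must cover beyond the paper's Lemma \ref{lem-non-minimal}: $B_1>\beta_1$ but $A_\sigma=\alpha_\sigma$. Your correction term $-\Phi^1\cdot\pa_\ca Y_a$ produces, when $\pa_\ca$ hits $\Psi^\sigma_a$, a trace on a factor $\Psi^\sigma$ of type $(\alpha_\sigma,B_\sigma+1)$, which is non-minimal \emph{only} in the anti-holomorphic direction. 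Lemma \ref{lem:internal-trace} then necessarily runs its second branch: it keeps the holomorphic index of the trace on $\Psi^\sigma$ and pushes a derivative $\pa_\ca$ onto the remaining factors. When that $\pa_\ca$ lands on $\Phi^1=\pa^{(A_1,B_1-1)}\psi^1$ it recreates the special contraction $\Psi^1_\ca\Psi^\sigma_a$ and in fact reproduces (up to sign) the original term $C(\Psi)$. So the iteration does not terminate; depending on the signs you either get a vacuous identity or at best an equation $2C(\Psi)=\text{divergence}+\cdots$ that you have not verified. This is precisely why the paper's Lemma \ref{lem-non-minimal} demands \emph{both} $B_1>\beta_1$ and $A_\sigma>\alpha_\sigma$, and uses the symmetric pair $T_a$, $T_\ca=-T_a|_{a\to\ca}$ whose trace contributions cancel identically without ever invoking Lemma \ref{lem:internal-trace}. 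The cases where exactly one of $\Psi^1,\Psi^\sigma$ is minimal cannot be handled by integration by parts alone and are exactly the content of the paper's Lemmas \ref{lem.decr.A.Bad1} and \ref{lem.decr.A.Bad2}.

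Your second step is in the right spirit but is consequently asked to do less than it must (it only addresses the doubly-minimal terms), and it is under-specified at the crucial point: to prove that the selected sublinear combination of $\Local_1L(\Psi)$ arises \emph{exclusively} from the intended piece of $[M\Special\|L(\Psi)]$, the paper must also stratify by the \emph{minimum} value $b_\sigma$ of $B_\sigma$ (not only by maximizing contraction counts), and the counting argument in Claim \ref{lem.new.int} uses $A_\sigma=\alpha_\sigma$ and $B_\sigma=b_\sigma$ in an essential way. Note also that the paper erases the whole factor $\Psi^\sigma$ and applies Proposition \ref{main.prop}(i) with a single special factor $\Psi^1$, obtaining the lexicographic increase of $(A_\sigma,B_\sigma)$ from the correction terms $\wt T_a(\Psi')\pa_\ca\Psi^\sigma$; your variant, which keeps $\Psi^\sigma$ and erases only the contraction pairs, would need a separate argument for why the leftover terms are absorbed into $L^{M-}(\Psi)$, and your appeal to ``clearing reappearing traces by Lemma \ref{lem:internal-trace}'' risks the same circularity as above.
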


It is clear that if we can show this lemma,  then Proposition \ref{prop.no.special} will follow by iterative repetition.

We first prove a weak version of Lemma \ref{decr.A}.

\begin{lemma}\label{lem-non-minimal}
If each term in $[M\Special\|L(\Psi)]$ satisfies $A_\sigma>\alpha_\sigma$ and $B_1>\beta_1$ (see Definition \ref{calLdef}), then Lemma \ref{decr.A} holds.
\end{lemma}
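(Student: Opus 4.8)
The plan is to prove Lemma~\ref{lem-non-minimal} by a direct formal construction, without invoking the integral hypothesis at all — exactly as in Lemma~\ref{lem:internal-trace}. Write $M$ for the maximal number of special contractions $\Special$ appearing in $[M\Special\|L(\Psi)]$; we may assume $M\ge1$, otherwise there is nothing to prove. The guiding idea is that in each term a special contraction pairs an antiholomorphic index $\ca^*$ of $\Psi^1$ against a holomorphic index $a^*$ of $\Psi^\sigma$, and since $A_\sigma>\alpha_\sigma$ the factor $\Psi^\sigma$ carries a spare holomorphic derivative, so $\partial_{a^*}$ can be peeled off $\Psi^\sigma$ and the special contraction recognized — up to Leibniz corrections — as the divergence of a $(0,1)$-form.

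First I would make the extraction precise. For each term $C^l(\Psi)$ in $[M\Special\|L(\Psi)]$ single out one of its special contractions and let $T^l_\cb(\Psi)$ be the partial contraction obtained by deleting the holomorphic index $a^*$ from $\Psi^\sigma$ (legitimate since $A_\sigma-1\ge\alpha_\sigma$, so the result is $\calL$-acceptable) and promoting the now uncontracted index $\ca^*$ of $\Psi^1$ to a free antiholomorphic index $\cb$. Because $\Psi^1$ and $\Psi^\sigma$ are symmetric in their derivative indices, the choice of special contraction is immaterial, so $T_\cb(\Psi):=\sum_l a_l T^l_\cb(\Psi)$ is a well-defined $\calL$-acceptable $(0,1)$-form. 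Then I would compute $\partial_b T_\cb(\Psi)$ by Leibniz and sort the output according to which factor $\partial_b$ hits: the terms where it lands back on $\Psi^\sigma$ reproduce $[M\Special\|L(\Psi)]$; the terms where it lands on $\Psi^1$ form a combination $L_{\mathrm{tr}}(\Psi)$ in which $\Psi^1$ has type $(A_1+1,B_1)$ and carries exactly one trace while the remaining $M-1$ special contractions survive; and the terms where it lands on some $\Psi^k$ with $2\le k\le\sigma-1$ are $\calL$-acceptable, trace-free, and have only $M-1$ special contractions, so they are legitimate pieces of $L^{M-}(\Psi)$. This yields $[M\Special\|L(\Psi)]=\partial_b T_\cb(\Psi)-L_{\mathrm{tr}}(\Psi)+(\text{terms admissible for }L^{M-})$.

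Next I would clear $L_{\mathrm{tr}}(\Psi)$. In each of its terms $\Psi^1$ is non-minimal, so Lemma~\ref{lem:internal-trace} applies; the point to watch is that removing the trace from $\Psi^1$ must not re-create a special contraction $\Psi^1_\ca\Psi^\sigma_a$ and thus undo the reduction. This is exactly where the hypothesis $B_1>\beta_1$ is used: it makes available the \emph{antiholomorphic} route in the proof of Lemma~\ref{lem:internal-trace}, in which the index freed on $\Psi^1$ is \emph{holomorphic}, so the new contractions created between $\Psi^1$ and another factor are all of the reverse type $\Psi^1_a\Psi^k_\ca$ ($2\le k\le\sigma$) and never of the special type $\Psi^1_\ca\Psi^\sigma_a$. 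Hence $L_{\mathrm{tr}}(\Psi)=\partial_\ca T'_a(\Psi)+\partial_a T'_\ca(\Psi)+L'(\Psi)$ with $\calL$-acceptable forms and with $L'(\Psi)$ trace-free and still carrying at most $M-1$ special contractions. Absorbing $-L'(\Psi)$ into $L^{M-}(\Psi)$ and relabelling $T_a,T_\ca$ gives the statement of Lemma~\ref{decr.A}. One checks along the way that each of these operations leaves the order $d$ unchanged.

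The main obstacle is precisely this interplay between traces and special contractions: turning a single special contraction into a divergence via Leibniz inevitably dumps a trace onto $\Psi^1$, and clearing that trace must be done without resurrecting a special contraction. The two non-minimality hypotheses are exactly what make this possible — $A_\sigma>\alpha_\sigma$ allows the extracted $(0,1)$-form $T_\cb(\Psi)$ to be formed while staying $\calL$-acceptable, and $B_1>\beta_1$ allows the resulting trace on $\Psi^1$ to be removed along the antiholomorphic route, whose asymmetry — that $\Psi^1_\ca\Psi^\sigma_a$ and $\Psi^1_a\Psi^\sigma_\ca$ are genuinely different contractions, only the first being ``special'' — keeps the special-contraction count from climbing back to $M$.
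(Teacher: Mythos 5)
Your argument is correct as a proof of the literal statement, but it takes a genuinely different route from the paper's, and the difference has one consequence worth flagging. The paper's proof is a one-step construction: it erases the index $\ca$ from $\Psi^1$ (using $B_1>\beta_1$) and leaves $a$ free on $\Psi^\sigma$ to form $T_a$, then defines $T_\ca$ as $-T_a$ with the free index on $\Psi^\sigma$ flipped to antiholomorphic (using $A_\sigma>\alpha_\sigma$). The point of this pairing is that in $\pa_\ca T_a+\pa_a T_\ca$ the two Leibniz terms in which the divergence derivative lands back on $\Psi^\sigma$ --- the only place a trace can appear --- occur with opposite signs and cancel, so no trace is ever created and no second step is needed. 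Your construction instead peels the index off $\Psi^\sigma$ and frees the conjugate one on $\Psi^1$, accepts the trace that Leibniz dumps on $\Psi^1$, and clears it with a second divergence via the antiholomorphic route of Lemma \ref{lem:internal-trace}; your accounting of why neither step can recreate a special contraction is accurate (the new contractions are all of the non-special orientation $\Psi^1_a\Psi^k_\ca$), and the two hypotheses are consumed in the two successive steps rather than in the two halves of one cancelling pair. Both arguments are purely formal and correctly avoid the integral hypothesis.

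The one substantive caveat: your forms $T_\cb$ and $T'_b$ carry their free index on $\Psi^1$, whereas the paper's carry it on $\Psi^\sigma$. Lemma \ref{decr.A} as literally stated does not constrain the location of the free index, so your proof does establish it; but Proposition \ref{prop.no.special}, which is obtained from Lemma \ref{decr.A} by iteration, asserts in addition that the free indices do not belong to $\Psi^1$, and that extra clause is what feeds Proposition \ref{main.prop} (i) and lets the induction on the weight close. Your construction as written would not deliver that clause. The fix is immediate --- reflect the construction: erase $\ca^*$ from $\Psi^1$ (legitimate since $B_1>\beta_1$), leave $a^*$ free on $\Psi^\sigma$, and clear the trace that then lands on $\Psi^\sigma$ via the holomorphic route (legitimate since $A_\sigma>\alpha_\sigma$), which frees an antiholomorphic index on $\Psi^\sigma$ and again creates only non-special contractions of the form $\Psi^j_a\Psi^\sigma_\ca$ --- but it should be said explicitly.
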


\begin{proof}
Consider a term  $C(\Psi)$ in $[M\Special\|L(\Psi)]$. Consider any special contraction  $\Psi^1_\ca\Psi^\sigma_a$ in $C^l(\Psi)$.  We let $T_a(\Psi)$ be the $(1,0)$-form arising from $C(\Psi)$ by {erasing} the index $\ca$ from $\Psi^1$ and making $a$ into a free index.  We also let $T_\ca(\Psi)$ to be the $(0,1)$-form that arises from $-T_a(\Psi)$ by formally replacing the holomorphic free index $a$ by $\ca$. Note that by construction, the forms $T_a, T_\ca$ thus constructed have no traces, and the free index does not belong to $\Psi^1$. Moreover, they are $\mc{L}$-acceptable by the assumption $A_\sigma>\alpha_\sigma$ and $B_1>\beta_1$. We write 
\beq
C(\Psi)-\pa_\ca T_{a}-\pa_a T_\ca=\sum \wt{C}^l(\Psi).
\eeq 
Then the right hand side has less special contractions and also contains no traces. Applying the same procedure to each term of  $[M\Special\|L(\Psi)]$, we obtain the lemma.
\end{proof}

Note that the proof above does not use the integral equation for $L(\Psi)$.

In particular, if $M>\max\{\alpha_\sigma,\beta_1\}$, then we have  $A_\sigma>\alpha_\sigma$ and $B_1>\beta_1$ as $\min\{A_\sigma,B_1\}\ge M$; thus we can apply Lemma \ref{lem-non-minimal}. Hence we make the additional assumption for the rest of this proof that 
\beq\label{assn2}
M\le\max\{\alpha_\sigma, \beta_1\}.
\eeq

We denote the sublinear combination of $[M\Special\|L(\Psi)]$ consisting of the terms  with $A_\sigma=\alpha_\sigma$ by
$[\alpha_\sigma \|M\Special\|L(\Psi)].$

\begin{lemma}\label{lem.decr.A.Bad1}
Assume \eqref{assn3.5} and \eqref{assn2}. Then there exist $\mc{L}$-acceptable forms  $T_a(\Psi), T_{\ca}(\Psi)$ such that 
\beq
\label{eq.decr.A.Bad1}
[\alpha_\sigma\|M\Special\|L(\Psi)]=\pa_\ca T_a(\Psi)+\pa_a T_\ca(\Psi)+L^{
\alpha_\sigma+}(\Psi),
\eeq
where $L^{\alpha_\sigma+}(\Psi)$ is a linear combination of  $\mc{L}$-acceptable contractions  which contain no traces, have $M$ special contractions and also has $A_\sigma>\alpha_\sigma$. 
\end{lemma}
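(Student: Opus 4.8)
The plan is to follow the template of Step~2 in the proof of Proposition~\ref{prop.no.trace}: push the relevant part of $[\alpha_\sigma\|M\Special\|L(\Psi)]$ into a local divergence formula, erase indices to lower the weight, invoke the inductive hypothesis of Proposition~\ref{main.prop}, and then reinstate the erased structure.

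I would first dispose, by a direct construction that does not use the integral hypothesis, of the part of $[\alpha_\sigma\|M\Special\|L(\Psi)]$ whose terms satisfy $B_1>\beta_1$, in the spirit of Lemma~\ref{lem-non-minimal}. For such a term $C(\Psi)$ and one of its special contractions $\Psi^1_\ca\Psi^\sigma_a$, erase $\ca$ from $\Psi^1$ (legitimate since $B_1>\beta_1$), make $a$ a free holomorphic index, and let $T_a(\Psi)$ be the resulting $\mc{L}$-acceptable $(1,0)$-form; set $T_\ca=0$. Then $C(\Psi)-\pa_\ca T_a(\Psi)$ is the sum of the Leibniz terms in which $\pa_\ca$ does not return to $\Psi^1$: each of these has one fewer special contraction, and the one in which $\pa_\ca$ lands on $\Psi^\sigma$ acquires a trace in a factor non-minimal in its antiholomorphic slot, so that Lemma~\ref{lem:internal-trace} removes that trace modulo a further divergence, still producing only terms with fewer special contractions. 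Summing over all terms and all their special contractions, we are reduced to the sublinear combination of $[\alpha_\sigma\|M\Special\|L(\Psi)]$ whose terms also have $B_1=\beta_1$: now $\Psi^1$ is minimal in its antiholomorphic indices, $\Psi^\sigma$ is minimal in its holomorphic indices, and the $M$ special contractions tie $M\le\beta_1$ antiholomorphic indices of $\Psi^1$ to $M\le\alpha_\sigma$ holomorphic indices of $\Psi^\sigma$.

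For this remaining piece I would use the local divergence formula $\Local_1L(\Psi)=0$, which by Theorem~\ref{second_main_thm} holds formally since $n\ge\sigma-1$. The key point is that a special contraction has a recognizable signature under integration by parts of $\psi^1$: when one of the $M$ antiholomorphic derivatives of $\psi^1$ coming from a special contraction is forced onto $\Psi^\sigma$, it creates a \emph{trace} in $\Psi^\sigma$ that raises the antiholomorphic index count of $\Psi^\sigma$ by one while leaving its holomorphic index count unchanged --- unlike a trace arising from a reverse contraction $\Psi^1_s\Psi^\sigma_\cs$, which raises the holomorphic count. I would therefore single out inside $\Local_1L(\Psi)$ the sublinear combination of terms in which $\Psi^\sigma$ carries exactly $M$ traces, all of the first kind, and has holomorphic index count $\alpha_\sigma$ (after stratifying $L$, and hence this combination, by the antiholomorphic index count of $\Psi^\sigma$, and further splitting according to which factors the remaining free indices sit on, so as to pass to scalar- rather than tensor-valued integral equations, which is legitimate by Remark~\ref{KEY}). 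As in \S\ref{int-by-part-sec} and Step~2 of \S\ref{Pf.no.trace}, this combination can receive contributions only from the terms of the piece isolated above in which all $M$ special derivatives of $\psi^1$, and no holomorphic derivative of $\psi^1$, land on $\Psi^\sigma$; and it vanishes separately and formally, because the number of traces in a prescribed factor and its holomorphic and antiholomorphic index counts are invariant under the formal operations of Theorem~\ref{second_main_thm}.

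In this local equation I would then erase the $M$ traces from $\Psi^\sigma$ and reverse the integrations by parts, obtaining a new integral equation $\int\wt L(\Psi)=0$ in the same dimension, where $\wt L(\Psi)$ has degree $\sigma$, positive order, no traces, weight $w-M$, and is $\mc{L}'$-acceptable with $\mc{L}'$ obtained from $\mc{L}$ by lowering $\alpha_\sigma$ and $\beta_1$ each by $M$. By the inductive hypothesis of Proposition~\ref{main.prop} (applied, via Remark~\ref{KEY}, to the associated system of scalar-valued equations) and its refinement~(i), $\wt L(\Psi)=\pa_\cc\wt T_c(\Psi)+\pa_c\wt T_\cc(\Psi)$ with $\mc{L}'$-acceptable traceless forms whose free index avoids the factor $\Psi^\sigma$. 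Reinstating the $M$ special contractions turns the left side back into the piece we are analyzing and the right side into $\mc{L}$-acceptable forms $T_a(\Psi),T_\ca(\Psi)$; the remaining discrepancy consists exactly of the terms in which one of the reinstated derivatives falls on the minimal factor $\Psi^\sigma$, and after removing any trace so created by Lemma~\ref{lem:internal-trace} each such term either has fewer than $M$ special contractions or has $A_\sigma>\alpha_\sigma$ --- that is, it lies in $L^{\alpha_\sigma+}(\Psi)$ (terms with fewer special contractions, should they appear, are even better and absorbed). The step I expect to be the main obstacle is the isolation of the correct sublinear combination of $\Local_1L(\Psi)$ in the third paragraph together with the matching bookkeeping in the fourth: one must check it is stable under the allowed formal operations and cannot be contaminated by reverse contractions, by non-special derivatives of $\psi^1$, or by terms of $L$ with $A_\sigma>\alpha_\sigma$ or fewer special contractions, and, dually, that after reinstatement the error terms are precisely of the asserted type. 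This delicate multi-parameter tracking is what the remainder of \S\ref{dgt0} is devoted to.
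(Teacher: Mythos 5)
Your overall template (isolate a sublinear combination of $\Local_1L(\Psi)$ recognized by traces in $\Psi^\sigma$, pass to a lower-weight integral equation, invoke the inductive hypothesis, reinstate) is the right one, but two of your steps contain genuine gaps.

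First, the preliminary reduction to $B_1=\beta_1$ is circular. With $A_\sigma=\alpha_\sigma$ you cannot run the construction of Lemma \ref{lem-non-minimal} one-sidedly: setting $T_{\ca}=0$, the Leibniz term in $\pa_\ca T_a(\Psi)$ where $\pa_\ca$ lands on $\Psi^\sigma$ produces a trace in a factor of type $(\alpha_\sigma,B_\sigma+1)$, and since the holomorphic slot is already minimal, Lemma \ref{lem:internal-trace} must remove that trace by keeping $\pa_a$ on $\Psi^\sigma$ and redistributing $\pa_\ca$; the summand where that $\pa_\ca$ lands on $\Psi^1$ regenerates a contraction $\Special$ and in fact reproduces the original term $C(\Psi)$ with $M$ special contractions and $A_\sigma=\alpha_\sigma$. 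The whole point of the two-sided construction in Lemma \ref{lem-non-minimal} is that the trace terms from $\pa_\ca T_a$ and $\pa_a T_\ca$ cancel, and that requires \emph{both} $B_1>\beta_1$ and $A_\sigma>\alpha_\sigma$. The paper makes no such reduction inside this lemma.

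Second, and more seriously, your new integral equation $\int\wt L(\Psi)=0$ does not follow from your argument. The sublinear combination of $\Local_1L(\Psi)$ you isolate (output factor $\Psi^\sigma$ of prescribed type with exactly $M$ traces and holomorphic count $\alpha_\sigma$) consists, by construction, of terms in which \emph{no} remaining derivative of $\Psi^1$ is allowed to hit $\Psi^\sigma$ --- any such derivative would change the type or trace count of $\Psi^\sigma$ and exit the sublinear combination. After you erase the $M$ traces you therefore hold a local identity in which $\pa_{\calI'}$ is Leibniz-distributed only over $\Psi^2,\dots,\Psi^{\sigma-1}$, while the product still contains the factor $\Psi^\sigma$. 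Reversing the integration by parts against $\psi^1$ requires $\pa_{\calI'}$ of the \emph{full} product, which differs from your local expression precisely by the terms where some derivative hits $\Psi^\sigma$; these are uncontrolled, so $\int\wt L(\Psi)=0$ is not established. This is exactly why the paper erases the \emph{entire} factor $\Psi^\sigma$ (after additionally stratifying by $b_\sigma=\min B_\sigma$ and by $N$, the maximal number of reverse contractions $\Psi^1_a\Psi^\sigma_\ca$, so that the output factor $\Delta^{M+N}\pa^{(\alpha_\sigma-M,b_\sigma-N)}\psi^\sigma$ is pinned down and all contractions with $\Psi^\sigma$ are accounted for): once $\Psi^\sigma$ is gone, the reversal of the integration by parts is exact, and reattaching $\Psi^\sigma$ as a multiplicative factor is what generates the correction terms $\wt T_a(\Psi')\pa_\ca\Psi^\sigma+\wt T_\ca(\Psi')\pa_a\Psi^\sigma$ lying in $L^{\alpha_\sigma+}(\Psi)$. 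Your final paragraph describes such Leibniz corrections, but they are inconsistent with your construction of $\wt L$, in which $\Psi^\sigma$ was never removed (reinstating contracted index pairs alone produces no such corrections). Relatedly, your stratification by the antiholomorphic count of $\Psi^\sigma$ must be organized as a lexicographic iteration starting from the minimal value $b_\sigma$, since only that stratum is free of contamination from terms of smaller $B_\sigma$ with extra non-special $\ca$'s landing on $\Psi^\sigma$.
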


Using this lemma, by replacing $L(\Psi)$ by $L(\Psi)-\pa_\ca T_a(\Psi)-\pa_a T_\ca(\Psi)$, we may assume
$$
[\alpha_\sigma\|M\Special\|L(\Psi)]=0.
$$
In this setting, let $[\beta_1\|M\Special\|L(\Psi)]$ be the sublinear combination of terms in $[M\Special\|L(\Psi)]$ with $B_1=\beta_1$.  Then we claim

\begin{lemma}
\label{lem.decr.A.Bad2}
There exist $\mc{L}$-acceptable forms  $T_a(\Psi), T_{\ca}(\Psi)$ such that
\beq
\label{eq.decr.A.Bad2}
[\beta_1\|M\Special\|L(\Psi)]=\pa_\ca T_a(\Psi)+\pa_a T_\ca(\Psi)+
L^{\beta_1+}(\Psi),
\eeq
where $L^{\beta_1+}(\Psi)$ is a linear combination of $\mc{L}$-acceptable partial contractions which contain no traces, have $M$ special contractions, {\it and} also satisfy both $A_\sigma>\alpha_\sigma$ {\it and} $B_1 >\beta_1$.
\end{lemma}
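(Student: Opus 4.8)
Since in the terms we must handle the factor $\Psi^1$ is antiholomorphically minimal ($B_1=\beta_1$) while $\Psi^\sigma$ has, by the normalization $[\alpha_\sigma\|M\Special\|L(\Psi)]=0$ already in force, $A_\sigma>\alpha_\sigma$, the plan is to run the argument of Lemma~\ref{lem.decr.A.Bad1} with the holomorphic and antiholomorphic indices, and the factors $\Psi^\sigma$ and $\Psi^1$, interchanged. First I would dispose of the trivial case: a special contraction occupies one antiholomorphic slot of $\Psi^1$, so $[\beta_1\|M\Special\|L(\Psi)]$ is empty unless $M\le\beta_1$; hence one may assume $1\le M\le\beta_1\le2$, with $\Psi^1$ carrying exactly $\beta_1$ antiholomorphic derivative indices in each such term, $M$ of them contracted against holomorphic indices of $\Psi^\sigma$.

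As $\Psi^1$ is antiholomorphically minimal, the divergences we need cannot be obtained by erasing an antiholomorphic index from $\Psi^1$ while preserving $\mc{L}$-acceptability, so the integral hypothesis is essential. The core step is to produce from it a lower-weight integral equation. I would apply the local divergence formula $\Local_\sigma L(\Psi)=0$ (integration by parts on $\psi^\sigma$) and isolate the sublinear combination of $\Local_\sigma L(\Psi)$ in which no derivative stripped off $\psi^\sigma$ has landed on $\Psi^1$. As in \S\ref{int-by-part-sec}, this sublinear combination vanishes on its own, because the number of derivatives hitting a given factor is invariant under the admissible formal operations and $\Local_\sigma L(\Psi)=0$ holds \emph{formally} (Theorem~\ref{second_main_thm}, using $n\ge\sigma-1$). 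In this sublinear combination $\psi^\sigma$ is bare while $\Psi^1$ still carries its $\beta_1$ antiholomorphic indices; I would then erase $\psi^\sigma$ together with a suitable non-empty list of the freshly created derivatives, leave the freed indices free, reverse the integrations by parts (as in the Lemma of \S\ref{int-by-part-sec}, with Remark~\ref{KEY} used to treat the resulting tensor-valued equation as a system of scalar ones), and obtain $\int\wt L(\Psi)=0$ for a trace-free $\wt L$ of degree $\sigma-1$, of strictly smaller weight, and of positive order (the last point arranged by erasing just enough derivatives). The inductive hypothesis of Proposition~\ref{main.prop} in its strengthened form (i)/(ii), with $\Psi^1$ kept as the special factor, then gives $\wt L(\Psi)=\pa_\ca\wt T_a(\Psi)+\pa_a\wt T_{\ca}(\Psi)$ with trace-free, special-contraction-free $\wt T$ whose free indices avoid $\Psi^1$. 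Re-attaching $\psi^\sigma$ and the erased derivatives by reversing the integrations by parts turns this into $[\beta_1\|M\Special\|L(\Psi)]=\pa_\ca T_a(\Psi)+\pa_a T_{\ca}(\Psi)+L^{\beta_1+}(\Psi)$, where every correction term is one in which a re-attached derivative has been forced onto $\Psi^1$ as an antiholomorphic derivative (so $B_1>\beta_1$), while the $M$ special contractions reappear on $\psi^\sigma$, no trace is created, and $A_\sigma>\alpha_\sigma$ persists; that is, $L^{\beta_1+}(\Psi)$ is of the asserted form.

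As in Steps~2 and~3 of \S\ref{Pf.no.trace}, in practice one cannot operate on $[\beta_1\|M\Special\|L(\Psi)]$ at once but must first pass to a sub-sublinear combination obtained by successively maximizing a short list of auxiliary parameters (for example the holomorphic order $A_1$ of $\Psi^1$ and the number of \emph{non-special} contractions between $\Psi^1$ and $\Psi^\sigma$), so that the factors and derivatives to be erased are pinned down, and then argue by descending induction on those parameters. I expect this bookkeeping to be the main obstacle: one must make sure both that the singled-out sublinear combination of the local equation genuinely vanishes by itself — that none of its terms arises from \emph{other} terms of $\Local_\sigma L(\Psi)$ under permutations of tensors and of barred/unbarred indices — and that after re-attachment every correction term truly lands in $L^{\beta_1+}(\Psi)$: no correction acquires a trace, none drops below $M$ special contractions, the inequality $A_\sigma>\alpha_\sigma$ is not destroyed, and the antiholomorphic order of $\Psi^1$ strictly increases. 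Granting these invariants, the lemma follows; together with Lemmas~\ref{lem.decr.A.Bad1} and~\ref{lem-non-minimal} it yields Lemma~\ref{decr.A}.
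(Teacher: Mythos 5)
Your high-level plan --- mirror Lemma \ref{lem.decr.A.Bad1} with $\Psi^1\leftrightarrow\Psi^\sigma$ and barred$\leftrightarrow$unbarred indices interchanged --- is indeed the paper's strategy, but your execution goes wrong at exactly the two steps that carry the content of the lemma. First, the sublinear combination of $\Local_\sigma L(\Psi)$ you isolate is characterized by \emph{how its terms arose} (``no derivative stripped off $\psi^\sigma$ has landed on $\Psi^1$''), which is not a feature of the resulting formal expression: different distributions of derivatives can produce identical complete contractions, so one may only select by intrinsic data (the type of each factor and its number of traces). Worse, the correct mirror selection is the \emph{opposite} of yours: one isolates the terms in which the factor $\psi^1$ has a prescribed type and the \emph{maximal} number $M+N$ of traces, which occurs precisely when the derivatives of $\psi^\sigma$ that were contracted against $\Psi^1$ are all forced to land \emph{on} $\Psi^1$ and no others do; it is the maximality of $M$ and $N$ (and the minimality of $a_1=\min A_1$) that guarantees these terms come exclusively from the target sublinear combination. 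Your selection instead receives contributions from every term of $L(\Psi)$ with $B_1=\beta_1$, irrespective of its number of special contractions, so it does not isolate $[\beta_1\|M\Special\|L(\Psi)]$.

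Second, the factor you erase and re-attach is $\psi^\sigma$, whereas in the mirrored argument it must be $\Psi^1$ (with $\Psi^\sigma$ retained as the special factor for the inductive application of Proposition \ref{main.prop}). After the inductive step one writes the relevant terms as $\pa_\ca(\wt T_a\,\Psi^1)+\pa_a(\wt T_\ca\,\Psi^1)-\wt T_a\,\pa_\ca\Psi^1-\wt T_\ca\,\pa_a\Psi^1$, and it is precisely because the re-attached factor is $\Psi^1$ that the error terms carry an extra derivative on $\Psi^1$ and hence satisfy $B_1>\beta_1$ (or increase $A_1$, which is absorbed by a lexicographic iteration on $(B_1,A_1)$ parallel to the $(A_\sigma,B_\sigma)$ iteration of \eqref{delta.step}). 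With your choice the extra derivative lands on $\Psi^\sigma$, which makes no progress on $B_1$, so your assertion that every correction term acquires an antiholomorphic derivative on $\Psi^1$ does not follow from the construction. Finally, you list ``$A_\sigma>\alpha_\sigma$ is not destroyed'' as a point to verify but give no mechanism; the one genuinely new ingredient in the paper's proof relative to Lemma \ref{lem.decr.A.Bad1} is that, since $[\alpha_\sigma\|M\Special\|L(\Psi)]=0$ is already in force, the restriction list of the lower-weight integral equation may be taken with $\alpha'_\sigma=\alpha_\sigma-M+1$ rather than $\alpha_\sigma-M$, and this ``$+1$'' is exactly what forces the correction terms to retain $A_\sigma>\alpha_\sigma$ once the $M$ special contractions are restored.
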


Applying  Lemma \ref{lem-non-minimal} to the the remainder $L^{\beta_1+}(\Psi)$, we obtain  Lemma \ref{decr.A}. Thus it remains to prove Lemmas \ref{lem.decr.A.Bad1} and \ref{lem.decr.A.Bad2}. 

\subsubsection{Proof of Lemma \ref{lem.decr.A.Bad1}}
Consider all terms in $[\alpha_\sigma\|M\Special\|L(\Psi)]$. Let $b_\sigma$ be the minimum of $B_\sigma$, the number of anti-holomorphic indices in $\Psi^\sigma$.  We let  $[b_\sigma\|\alpha_\sigma\|M\Special\|L(\Psi)]$ be the sublinear combination of terms with $B_\sigma=b_\sigma$. We will then 
prove that there exist  $\mc{L}$-acceptable forms $T_a(\Psi)$ and $T_\ca(\Psi)$ such that
\beq 
\label{delta.step}
[b_\sigma\|\alpha_\sigma\|M\Special\|L(\Psi)]=\pa_\ca T_a(\Psi)+\pa_a T_\ca(\Psi)+L^{(\alpha_\sigma,b_\sigma)+}(\Psi),
\eeq
where $L^{(\alpha_\sigma,b_\sigma)+}(\Psi)$  stands for a new linear combination of $\mc{L}$-acceptable complete contractions  which contain no traces, {\it and} $M$ special contractions, and $(A_\sigma,B_\sigma)>(\alpha_\sigma,b_\sigma)$ in the lexicographical order, i.e., $A_\sigma>\alpha_\sigma$ or 
($A_\sigma=\alpha_\sigma$ and $B_\sigma>b_\sigma$).   Clearly, if we can prove \eqref{delta.step} then by iterative repetition, we can derive \eqref{eq.decr.A.Bad1}.

We will prove \eqref{delta.step} in steps: Let $N$ be the maximum number of contractions of the form $\Psi^1_a\Psi^\sigma_\ca$ 
among the terms in $ [b_\sigma\|\alpha_\sigma\|M\Special\|L(\Psi)]$.  Denote the sublinear combination with exactly $N$ such contractions by 
$$
L^{N,b_\sigma,\alpha_\sigma,M}(\Psi)=[N\Psi^1_a\Psi^\sigma_\ca\|b_\sigma\|\alpha_\sigma\|M\Special\|L(\Psi)].
$$
We will then show 
\beq 
\label{delta.step2}
L^{N,b_\sigma,\alpha_\sigma,M}(\Psi) = \pa_\ca T_a(\Psi)+\pa_a T_\ca(\Psi)
+L^{(\alpha_\sigma,b_\sigma)+}(\Psi)
\eeq
with exactly the same notational conventions as for \eqref{delta.step}. If we can show \eqref{delta.step2} then we can derive \eqref{delta.step} by iterative repetition. 

We will show \eqref{delta.step2} by deriving a new integral equation. Erasing the factor $\Psi^\sigma$ from $L^{N,b_\sigma,\alpha_\sigma,M}(\Psi)$ and making the indices contracted against $\Psi^\sigma$ into free indices and then erasing the free indices that belong to $\Psi_1$, we obtain a tensor of type $(b_\sigma-N,\alpha_\sigma-M)$, which we denote by
$$
[\erase\Psi^\sigma\|L^{N,b_\sigma,\alpha_\sigma,M}(\Psi)].
$$

\begin{claim}\label{lem.new.int}
The following integral equation holds:
\beq
\label{new.int}
\int 
[\erase\Psi^\sigma\|L^{N,b_\sigma,\alpha_\sigma,M}(\Psi)]=0.
\eeq
\end{claim}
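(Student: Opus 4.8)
The plan is to deduce \eqref{new.int} from the $1$-local divergence formula $\Local_1 L(\Psi)=0$, via the ``integrate / reverse-integrate'' device of \S\ref{int-by-part-sec}, following the pattern of the proofs of Claims \ref{claim-int-eq-wtL} and \ref{lem.intM5L}. Since every term of $L(\Psi)$ has degree $\sigma\le n+1$, this local equation holds formally. I would integrate by parts the \emph{first} special factor $\psi^1$, rather than $\psi^\sigma$, because then $\psi^1$ disappears from the local equation and, at the very end, re-emerges carrying fewer derivatives --- which is exactly the effect of the operation ``erasing the free indices that belong to $\Psi^1$'' built into the definition of $[\erase\Psi^\sigma\|L^{N,b_\sigma,\alpha_\sigma,M}(\Psi)]$.

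The first step is to see how the terms of $L^{N,b_\sigma,\alpha_\sigma,M}(\Psi)$ contribute to $\Local_1 L(\Psi)$. Integrating $\psi^1$ by parts moves its $A_1+B_1$ derivatives off $\psi^1$ and, via Leibnitz, distributes them over $\Psi^2,\dots,\Psi^\sigma$; of these, exactly $N$ holomorphic and $M$ anti-holomorphic derivatives were contracted against $\Psi^\sigma$ (the $N$ contractions $\Psi^1_a\Psi^\sigma_\ca$ and the $M$ special contractions $\Special$), and each time one of them lands on $\Psi^\sigma$ it becomes a trace there, since the index it contracts now sits on $\Psi^\sigma$ itself. As no term of $L(\Psi)$ has a trace, this is the only mechanism producing traces on $\psi^\sigma$ in $\Local_1 L(\Psi)$, so that factor acquires at most $N+M$ traces; a short count using $A_\sigma=\alpha_\sigma$ and $B_\sigma=b_\sigma$ shows that it can appear as $\pa^{(\alpha_\sigma+N,\,b_\sigma+M)}\psi^\sigma$ carrying exactly $N+M$ traces \emph{only} in terms coming from $L^{N,b_\sigma,\alpha_\sigma,M}(\Psi)$ in which all $N+M$ of these derivatives land on $\Psi^\sigma$ (and no other $\psi^1$-derivative does). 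I would let $[\ast\|\Local_1 L(\Psi)]$ denote the sublinear combination consisting of exactly those terms; since $\Local_1 L(\Psi)=0$ holds formally, and since the valence of a named factor and the number of traces it carries are preserved by the admissible formal operations, $[\ast\|\Local_1 L(\Psi)]=0$ holds separately.

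Next I would \emph{erase} the decorated factor $\pa^{(\alpha_\sigma+N,\,b_\sigma+M)}\psi^\sigma$ from this vanishing local equation, promoting the $\alpha_\sigma-M$ anti-holomorphic and $b_\sigma-N$ holomorphic indices on $\Psi^2,\dots,\Psi^{\sigma-1}$ that were contracted against it to free indices (the $N+M$ internal trace-pairs simply disappear). This yields a tensor-valued local equation of type $(b_\sigma-N,\alpha_\sigma-M)$ in $\psi^2,\dots,\psi^{\sigma-1}$, which again holds formally --- or, following Remark \ref{KEY}, is read as the system of its scalar-valued components. Finally I would reverse the integration by parts as in the proof of Claim \ref{claim-int-eq-wtL}: multiplying by $\psi^1$ and integrating, the $A_1-N$ holomorphic and $B_1-M$ anti-holomorphic ``outer'' derivatives that were distributed over $\Psi^2,\dots,\Psi^{\sigma-1}$ are pulled back onto $\psi^1$, which re-emerges as $\pa^{(A_1-N,\,B_1-M)}\psi^1$ --- the $N+M$ derivatives that went into the traces on $\psi^\sigma$ having vanished for good with that factor. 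The resulting integrand is precisely $[\erase\Psi^\sigma\|L^{N,b_\sigma,\alpha_\sigma,M}(\Psi)]$, which gives \eqref{new.int}.

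The main obstacle is the bookkeeping behind $[\ast\|\Local_1 L(\Psi)]$: one must verify that it is genuinely closed under the admissible formal operations (hence vanishes on its own), that after integrating $\psi^1$ by parts it receives no spurious contribution from terms of $L(\Psi)$ outside $L^{N,b_\sigma,\alpha_\sigma,M}(\Psi)$ --- this is where the minimality $A_\sigma=\alpha_\sigma$, $B_\sigma=b_\sigma$ and the exact counts $M$, $N$ of the two kinds of contractions to $\Psi^\sigma$ are essential --- and that the ``erase / reverse-integrate'' steps are compatible with these counts; the last point requires, as in Claim \ref{claim-int-eq-wtL}, that the non-$\Psi^\sigma$-destined $\psi^1$-derivatives be carried along as an outer derivative operator (introducing a fresh copy of $\psi^1$ if convenient) rather than expanded by Leibnitz prematurely. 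Modulo this bookkeeping, which is a routine application of the machinery of \S\ref{int-by-part-sec}, the derivation of \eqref{new.int} is complete.
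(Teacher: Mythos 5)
Your proposal is correct and follows essentially the same route as the paper: apply $\Local_1$, isolate the sublinear combination whose $\psi^\sigma$-factor is $\pa^{(\alpha_\sigma+N,\,b_\sigma+M)}\psi^\sigma$ carrying exactly $M+N$ traces, argue via the trace/index count (no traces in $L(\Psi)$, maximality of $M$ and $N$, minimality of $\alpha_\sigma$ and $b_\sigma$) that this sublinear combination arises exclusively from $L^{N,b_\sigma,\alpha_\sigma,M}(\Psi)$, then erase the decorated factor and reverse the integration by parts. The ``bookkeeping'' you defer is precisely the counting argument the paper carries out to establish its equation \eqref{explanation}, and your sketch of it contains all the needed ingredients.
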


Before proving the claim, we will use \eqref{new.int} to prove \eqref{delta.step2}. Observe that the integral equation \eqref{new.int} falls under the inductive assumption of Proposition \ref{main.prop}, with a new restriction list  $\mc{L}':=(\alpha'_1,\beta'_1,\alpha_2,\beta_2,\dots,\alpha_{\sigma-1},\beta_{\sigma-1})$,
where $\alpha'_1=\alpha_1-N$, $\beta'_1=\beta_1-M$. (See Remark \ref{KEY}.) By the construction, there are no traces in any of the terms in $[\erase\Psi^\sigma\|L^{N,b_\sigma,\alpha_\sigma,M}(\Psi)]$.  Also the weight is now strictly lower than that of $L(\Psi)$,  by construction.  We choose $\Psi^1$ to be the special factor; there will be no second special factor here. Invoking the inductive assumption of Proposition \ref{main.prop} (i), we derive that there exist
$\mc{L}'$-acceptable forms $\wt T_a(\Psi'), \wt T_{\ca}(\Psi')$ of types $(b_\sigma-N+1,\alpha_\sigma-M)$ and $(b_\sigma-N,\alpha_\sigma-M+1)$, with the free indices $a,\ca$ {\it not} belonging to $\Psi^1$  so that: 
\beq 
\label{delta.setp}
[\erase\Psi^\sigma\|L^{N,b_\sigma,\alpha_\sigma,M}(\Psi)]=
\pa_\ca\wt T_a(\Psi')+\pa_a\wt T_\ca(\Psi').
\eeq
Now, add  $(N,M)$ holomorphic/anti-holomorphic free indices onto $\psi_1$ and multiply the resulting equation by a factor
$\Psi^\sigma=\pa^{(\alpha_\sigma,b_\sigma)}\psi^\sigma$;
then contract all indices in $\Psi^\sigma$ against the anti-holomorphic/holomorphic indices in 
the terms in \eqref{delta.setp} and against the added $(N,M)$ derivatives on $\psi^1$. This gives tensors
$$
T_a(\Psi)=\wt T_{a}(\Psi')\Psi^\sigma
\quad\text{and}\quad
T_\ca(\Psi)=\wt T_{\ca}(\Psi')\Psi^\sigma.
$$
We thus derive:
\beq 
\label{delta.setp'}
\begin{aligned}
L^{N,b_\sigma,\alpha_\sigma,M}(\Psi)=
&
\pa_{a} T_{\ca}(\Psi)+\pa_{\ca} T_{a}(\Psi)\\
&+\wt T_a(\Psi')\pa_{\ca}\Psi^\sigma
+\wt T_\ca(\Psi')\pa_{a}\Psi^\sigma.
\end{aligned}
\eeq
Now, we just observe that by construction, the terms in 
$\wt T_a(\Psi')\pa_{\ca}\Psi^\sigma$ and $\wt T_\ca(\Psi')\pa_{a}\Psi^\sigma$
have an additional derivative on $\Psi^\sigma$;
thus they are allowed in $L^{(\alpha_\sigma,b_\sigma)+}(\Psi)$.

\begin{proof}[Proof of Claim \ref{lem.new.int}]
Consider the local equation
$$
\Local_1L(\Psi)=0
$$
and take the sublinear combination $[\Delta^{M+N}\Psi^\sigma\|\Local_1 L(\Psi)]$ of the left-hand side consisting of terms with  factor 
$\Psi^\sigma=\pa^{(\alpha_\sigma+N,b_\sigma+M)}\psi^\sigma$ and with $M+N$ traces in that factor. Clearly this sublinear combination vanishes separately:
\beq
\label{loc-sharp}
[\Delta^{M+N}\Psi^\sigma\|\Local_1L(\Psi)]=0.
\eeq
We claim that $[\Delta^{M+N}\Psi^\sigma\|\Local_1L(\Psi)]$ arises exclusively from $L^{N,b_\sigma,\alpha_\sigma,M}(\Psi)$ by making each contraction of the form $\Psi^1_A\Psi^\sigma_{\ol A}$  ($A$ holomorphic or anti-holomorphic) give $\Psi^\sigma_{A\ol A}$ after the integrations by parts  in $\Local_1L(\Psi)$; we also {\it do not} allow any of the other indices in  $\Psi^1$ to hit $\Psi^\sigma$. In other words, we claim that \eqref{loc-sharp} can also be described as 
follows: write out 
\beq
\label{write.out}
L^{N,b_\sigma,\alpha_\sigma,M}(\Psi)
= \sum_{u\in U}a_u C^u(\Psi),
\eeq
where each term in the right-hand side  can be written in the form 
\beq
\label{Cuform}
C^u(\Psi)=
\Psi^1_{\ol\calJ\,\ol\calK}C_\flat^u(\Psi'')_{\calJ\ol\calM}
\Psi^\sigma_{\calM\calK}.
\eeq
Here $\Psi''$ is shorthand for the factors $\Psi^2,\dots,\Psi^{\sigma-1}$ and $\calJ,\calK,\calM$ are lists of holomorphic and anti-holomorphic indices; 
$\ol\calJ,\ol\calK,\ol\calM$ are the lists of indices obtained by taking conjugate. We know that the list $\calM\calK$ has type $(\alpha_\sigma,b_\sigma)$,
$\calK$ has type $(M,N)$ and $\calM$ has type $(\alpha_\sigma-M,b_\sigma-N)$. Let us consider the partial contraction 
\beq
\label{CuCut}
C_\flat^u(\Psi'')_{\calJ\ol\calM}
\Psi^\sigma_{\calM\calK}.
\eeq  
that arises by formally erasing the factor $\Psi_1$ and making all indices that it contracted against into free indices. Then our claim is that:
\beq
\label{explanation}
{[}\Delta^{M+N}\Psi^\sigma\|\Local_1L(\Psi)]
=
\sum_{u\in U}a'_u\pa_{\ol \calJ} C_\flat^u(\Psi'')_{\calJ\,\ol\calM}
\cdot\Delta^{M+N}\Psi^\sigma_{\calM},
\eeq 
where $a'_u=(-1)^{|\calJ|} a_u$.

Let us prove \eqref{explanation}. Consider any term $C^*(\Psi')$ in 
${[}\Delta^{M+N}\Psi^\sigma\|\Local_1L(\Psi)]$,
which has arisen from a contraction $C^l(\Psi)$ in $L(\Psi)$. Since by hypothesis there exist no traces in  $C^l(\Psi)$,  all of the $M+N$ contractions in $\Psi^\sigma$ of $C^*(\Psi')$ must have arisen from contractions between $\Psi^1,\Psi^\sigma$ in $C^l(\Psi)$. Moreover, since by definition $\Psi^\sigma$ in $C^*(\Psi')$ has precisely $(\alpha_\sigma-M)$ holomorphic indices {\it not} involved in a trace, $C^l$ must have had {\it at least} $M$ contractions of the form $\Psi^1_\ca
\Psi^\sigma_a$. Recall that $M$ is the {\it maximum} number of such contractions for all terms $C^l(\Psi)$ in $L(\Psi)$; thus the number of such contractions is exactly $M$. Therefore $C^l(\Psi)$ is a term in $[\alpha_\sigma\|M\Special\|L(\Psi)]$. This also means that {\it all} the remaining $N$ contractions in $C^*(\Psi)$ must have arisen from contractions of the form $\Psi^1_a\Psi^\sigma_\ca$; and since there are $(b_\sigma-N)$ antiholomorphic indices in $\Psi^\sigma$ which are not involved in a trace, it follows that $C^l(\Psi)$ must have belonged to  $L^{N,b_\sigma,\alpha_\sigma,M}(\Psi)$. Finally, by the analysis above, we see that the terms in $[\Delta^{M+N}\Psi^\sigma\|\Local_1C^l(\Psi)]$ arise precisely by the procedure of making \eqref{CuCut} from \eqref{Cuform}. This proves \eqref{explanation}.  

Since the left-hand side of \eqref{explanation} vanishes formally, by erasing $\Delta^{M+N}\Psi^\sigma_\calM$ and multiplying by $\Psi^1_{\ol \calK}$, we have
$$
\sum_{u\in U} a'_u 
\Psi^1_{\ol \calK}\cdot\pa_\calJ C_\flat^u(\Psi'')_{\ol \calJ\,\ol\calM}=0.
$$
Now, integrate the equation and integrate by parts (as in \S\ref{int-by-part-sec}), which moves $\ol\calJ$  to $\psi^1$. This gives 
$$
\int
\sum_{u\in U} a_u 
\Psi^1_{\ol \calJ\,\ol \calK}\cdot C_\flat^u(\Psi'')_{\calJ\,\ol\calM}=0,
$$
which is precisely \eqref{new.int}.
\end{proof}

\subsubsection{Proof of Lemma \ref{lem.decr.A.Bad2} }
The proof is essentially identical to that of Lemma \ref{lem.decr.A.Bad1} with the roles of $\Psi^1,\Psi^\sigma$ interchanged. The roles of holo\-morphic/anti-holo\-morphic indices in $\Psi_\sigma$ now corresponding to anti-holomo\-rphic/holo\-morphic indices in $\Psi_1$. The one difference now is that since 
$$
[\beta_1\|M\Special \|L(\Psi)]=0,
$$
when we reach the new integral equation \eqref{new.int}, all terms are now $\mc{L}'$-acceptable, with $\mc{L}'$ now being
$$
\mc{L}'=(\alpha_2,\beta_2,\dots, \alpha_{\sigma-1},\beta_{\sigma-1},\alpha'_\sigma,\beta'_\sigma),
$$
with $\alpha'_{\sigma}=\alpha_\sigma-M+1$, $\beta'_\sigma=\beta_\sigma-N$ (the difference is in the $+1$ in the first equation). This ensures that when we move from equation \eqref{delta.setp} to equation \eqref{delta.setp'}, the last correction terms we obtain in the right-hand side satisfies $B_1>M$ \emph{and} $A_\sigma>M$. These are precisely the terms allowed in the right-hand side of \eqref{eq.decr.A.Bad2}.

\subsection{Proof of Proposition \ref{prop.final.step}}
\label{Pf.final.step}
Throughout this section we start with $L(\Psi)$ satisfying the assumptions of Proposition \ref{prop.final.step}. Namely,
\beq\label{assn3.6}
\begin{aligned}
&\bullet\ \text{$L(\Psi)$ integrates to zero in a dimension $n$ with $n\ge\sigma-1$;}\\
&\bullet\ \text{$L(\Psi)$  has order $d>0$;}\\
&\bullet\ \text{$L(\Psi)$ contains no traces  and no special contractions $\Special$.}\\
\end{aligned}
\eeq
Recall that the forms $T_a, T_{\ol{a}}$ that we seek to construct should have the following properties:
\beq\label{assnT}
\begin{aligned}
&\bullet\ \text{
The free index $a, \ca$ does not belong to either $\Psi^1$ or $\Psi^\sigma$;}\\
&\bullet\ \text{There are no  traces and no special contractions 
$\Special$.}
\end{aligned}
\eeq
\begin{definition}
\label{types}
Given any  partial contraction $C^l(\Psi)$ in the form \eqref{contraction}, and given any number $h\in\{2,\dots,\sigma-1\}$, we let $\Pi'_l(h)$ and $\Pi''_l(h)$ stand, respectively, for the numbers of contractions of the forms:
$$
\Psi^1_\ca\Psi^h_a \quad\text{and}\quad
\Psi^h_\ca\Psi^\sigma_a.
$$
We also set $\Pi_l(h)=\Pi'_l(h)+\Pi''_l(h)$.
\end{definition}

Given a linear combination $L(\Psi)$ of terms in the form \eqref{contraction}, we let $M\{L(\Psi) \}$ stand for the maximum number $\Pi_l(h)$ among all 
$l\in\Lambda$ and all $h\in \{2,\dots, \sigma-1\}$. We then define $k\{L(\Psi) \}$ to be the minimum number  $h\in \{2, \dots, \sigma-1\}$ for  which $\Pi_l(h)=M\{L(\Psi) \}$ for some $l\in \Lambda$. (Thus by definition $\Pi_{h'}(l)<M\{L(\Psi) \}$ for all $l\in \Lambda$ and all $h'<k\{L(\Psi) \}$). We let 
$$
M:=M\{L(\Psi)\}\quad\text{ and }\quad k:=k\{L(\Psi)\}
$$ 
for brevity. The main claim that we show in this section is then the following:
\begin{lemma}
\label{step.improve}
Let  $L(\Psi)$ be an $\calL$-acceptable invariant that satisfies \eqref{assn3.6}. Then there exist $\mc{L}$-acceptable forms $T_a, T_{\ol{a}}$ which satisfy \eqref{assnT}  so that
\beq
\label{step.eqn}
L(\Psi)=\pa_\ca T_a+\pa_a T_\ca+
L^\sharp(\Psi),
\eeq
where $L^\sharp(\Psi)$ stands for an $\calL$-acceptable form that satisfies \eqref{assn3.6} and satisfies one of the estimates: 
\begin{itemize}
\item $M\{L^\sharp(\Psi)\}< M$ 
\item $M\{L^\sharp(\Psi)\}= M\  \text{and}\ 
k\{L^\sharp(\Psi)\}>k.$
\end{itemize}
\end{lemma}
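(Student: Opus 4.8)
The plan is to mimic the structure of §\ref{Pf.no.special}, with the distinguished intermediate factor $\Psi^k$ taking over the role that the single special factor $\Psi^1$ played there, and with the two families of \emph{indirect} contractions $\Psi^1_\ca\Psi^k_a$ and $\Psi^k_\ca\Psi^\sigma_a$ (through $\Psi^k$) playing the role of the special contraction $\Special$. Everything runs inside the induction on weight already in force: Proposition \ref{main.prop} is assumed for all weights $<w$. It is enough to show that the sublinear combination $[\,\Pi(k)=M\,\|\,L(\Psi)\,]$ of all terms with $\Pi_l(k)=M$ equals, modulo divergences $\pa_\ca T_a+\pa_a T_\ca$ of the type prescribed by \eqref{assnT}, a combination $L'(\Psi)$ in which every term has either $\Pi_l(h)<M$ for all $h$ or $\Pi_l(h)=M$ only for some $h>k$; then \eqref{step.eqn} follows by iterating over the finitely many terms of $L(\Psi)$, once one checks that none of the divergences introduced creates a term with $\Pi_l(h)=M$ for some $h\le k$. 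This last point is exactly why one must invoke the \emph{strengthened} form of the inductive hypothesis: by Proposition \ref{main.prop}(ii) the free indices of the auxiliary forms always avoid both $\Psi^1$ and $\Psi^\sigma$, which prevents new indirect contractions through the ``wrong'' factors from appearing.

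I would first dispose of an easy case by a naive erasing argument, with no integral equation. If in every term of $[\,\Pi(k)=M\,\|\,L(\Psi)\,]$ one has $B_1>\beta_1$ and $A_\sigma>\alpha_\sigma$, then as in Lemma \ref{lem-non-minimal} one takes a contraction $\Psi^1_\ca\Psi^k_a$, erases $\ca$ from $\Psi^1$ and promotes the index $a$ (which lives on $\Psi^k$, $2\le k\le\sigma-1$) to a free holomorphic index, producing an $\calL$-acceptable, trace-free form $T_a$ whose free index is off $\Psi^1,\Psi^\sigma$; symmetrically one handles $\Psi^k_\ca\Psi^\sigma_a$ by erasing $a$ from $\Psi^\sigma$. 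Subtracting the corresponding divergences reproduces the term modulo corrections in which $\Pi(k)$ has dropped or $\Psi^k$ has gained a derivative (any trace so created is cleared by Lemma \ref{lem:internal-trace}, since $\Psi^k$ is then non-minimal). Because $B_1\ge\Pi'_l(k)$ and $A_\sigma\ge\Pi''_l(k)$, this already handles all terms with $M>\beta_1+\alpha_\sigma$; for the remaining range, exactly mirroring Lemmas \ref{lem.decr.A.Bad1}–\ref{lem.decr.A.Bad2}, I would first move the sublinear combination with $B_1=\beta_1$ into the region $B_1>\beta_1$ and then the one with $A_\sigma=\alpha_\sigma$ into $A_\sigma>\alpha_\sigma$, each time by a new integral equation derived as below, and be reduced to the easy case.

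The substance of the proof is the extraction step, which itself proceeds by an auxiliary induction on a five-tuple attached to each term of $[\,\Pi(k)=M\,\|\,L(\Psi)\,]$: the two index counts $(A_k,B_k)$ of $\Psi^k$, the split $\Pi'_l(k)=p$ (hence $\Pi''_l(k)=M-p$), and the numbers of the two ``reversed'' contractions $\Psi^1_a\Psi^k_\ca$ and $\Psi^k_a\Psi^\sigma_\ca$. Let $L^{\mathrm{ex}}(\Psi)$ be the lexicographically extremal sublinear combination. From $\Local_kL(\Psi)=0$ — integration by parts off $\psi^k$, which converts each indirect contraction $\Psi^1_\ca\Psi^k_a$ (resp. $\Psi^k_\ca\Psi^\sigma_a$) into a trace in $\Psi^1$ (resp. $\Psi^\sigma$) — I single out the sublinear combination in which all the freed derivatives land on $\Psi^1$ and $\Psi^\sigma$, producing precisely the prescribed traces. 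Using that $L(\Psi)$ has no traces and no $\Special$, together with the maximality of $M$ and minimality of $k$, one argues (as in the proof of Claim \ref{lem.new.int}) that this sublinear combination comes \emph{exclusively} from $L^{\mathrm{ex}}(\Psi)$; since $\Local_kL(\Psi)=0$ holds formally it vanishes separately, so one erases the factor $\Psi^k$ and all those traces and reverses the integrations by parts, obtaining
$$
\int [\erase\Psi^k\,\|\,L^{\mathrm{ex}}(\Psi)]=0,
$$
an integral equation for an invariant of degree $\sigma-1$, strictly lower weight and no traces, with a new restriction list $\calL'$ whose bounds on $\Psi^1$ and $\Psi^\sigma$ are lowered by the appropriate amounts (so its order stays positive when $\Psi^k$ is minimal in $L^{\mathrm{ex}}$; the non-minimal case was absorbed into the easy case). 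Invoking Proposition \ref{main.prop}(i)–(ii) with $\Psi^1$ the special and $\Psi^\sigma$ the second special factor yields $\calL'$-acceptable $\widetilde T_a,\widetilde T_\ca$ with no traces, no contraction $\Psi^1_\ca\Psi^\sigma_a$, and free index off $\Psi^1,\Psi^\sigma$. Re-adding the freed indices onto $\psi^1,\psi^\sigma$, multiplying through by $\Psi^k=\pa^{(A_k,B_k)}\psi^k$ and recontracting (as in passing from \eqref{delta.setp} to \eqref{delta.setp'}) gives divergences $\pa_\ca T_a+\pa_a T_\ca$ of exactly the form \eqref{assnT}; the remaining terms carry an extra derivative on $\Psi^k$, hence lie strictly earlier in the five-tuple order and still have $\Pi(k)\le M$ (since $\widetilde T$'s free index avoids $\Psi^\sigma$), so the auxiliary induction closes.

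The main obstacle is precisely the bookkeeping of this five-parameter extraction: checking that the chosen sublinear combination of $\Local_kL(\Psi)$ is genuinely isolated, that $\calL'$ is a legitimate restriction list preserving positive order, that every correction term lands strictly lower in the lexicographic order so the iteration terminates, and — most delicately — that the forms $T_a,T_\ca$ produced at every stage continue to satisfy \eqref{assnT}. The last requirement is what forces the consistent choice of $\Psi^1$ and $\Psi^\sigma$ as special and second special factors and the use of the strengthened parts (i)–(ii) of Proposition \ref{main.prop}; without them the indirect-contraction counts $\Pi(h)$ could be disturbed at other factors and the induction on $(M,k)$ would not close.
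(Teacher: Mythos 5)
Your overall architecture is the right one (explicit divergences \`a la Lemma \ref{lem-non-minimal} when $\Psi^1,\Psi^\sigma$ are non-minimal, auxiliary lemmas in the style of Lemmas \ref{lem.decr.A.Bad1}--\ref{lem.decr.A.Bad2} for the minimal cases, then an extraction from $\Local_kL(\Psi)=0$ feeding the weight induction with $\Psi^1,\Psi^\sigma$ as the two special factors), and it matches the paper's. But the central mechanism you propose for the extraction has a genuine gap. You isolate the sublinear combination of $\Local_kL(\Psi)$ in which the freed derivative from each contraction $\Psi^1_\ca\Psi^k_a$ lands back on $\Psi^1$ (creating a trace there), and likewise for $\Psi^k_\ca\Psi^\sigma_a$ on $\Psi^\sigma$. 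Traces, however, are orientation-blind: a trace in $\Psi^1$ is created just as well by a reversed contraction $\Psi^1_a\Psi^k_\ca$ whose freed derivative $\pa_\ca$ hits $\Psi^1$, and such contractions are counted neither by $\Pi_l(k)$ nor bounded by $M$. Hence a term with $\Pi_l(k)<M$ but many reversed contractions can contribute to --- indeed dominate --- your trace-counted sublinear combination; extremizing a five-tuple only \emph{within} $[\Pi(k)=M\|L(\Psi)]$ cannot exclude these outside contributions, so the claimed exclusivity (``comes exclusively from $L^{\mathrm{ex}}$'') fails. The paper's device is engineered precisely to avoid this: it first normalizes each extremal term to have $M-1$ contractions $\Psi^1_\ca\Psi^k_a$ and exactly one $\Psi^k_\ca\Psi^\sigma_a$ (Lemma \ref{normalize}), and then forces each freed derivative onto the \emph{opposite} end-factor, producing $M$ special contractions $\Special$. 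Special contractions are orientation-sensitive and absent from $L(\Psi)$ by hypothesis, so their number in any term of $\Local_kL(\Psi)$ is bounded by $\Pi_l(k)\le M$, and the sublinear combination with exactly $M$ of them is genuinely isolated (Claim \ref{claim1}).

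A second problem: you erase the entire factor $\Psi^k$, dropping the degree to $\sigma-1$. If $\Psi^k$ is the only non-minimal factor --- which is not excluded, since your ``easy case'' concerns non-minimality of $\Psi^1,\Psi^\sigma$ and not of $\Psi^k$, so the parenthetical claim that the non-minimal-$\Psi^k$ case ``was absorbed into the easy case'' is unfounded --- the erased invariant has order zero with respect to the new restriction list and falls outside the inductive hypothesis of Proposition \ref{main.prop}. Erasing the whole factor also frees every index on the factors $\Psi^j$, $j\ne 1,\sigma$, that contracted against $\Psi^k$, and the subsequent recontraction must then be controlled on those factors as well, which you do not address. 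The paper sidesteps both issues by erasing only the $M$ distinguished contractions (one index at each end), keeping $\Psi^k$ in place with its contribution to the order intact and leaving all other contractions untouched.
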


It follows straightforwardly that if we can prove the above  lemma, then by iterative repetition we can derive our Proposition \ref{prop.final.step} also.

In this lemma,  we only need to study the terms with $\Pi_l(k)=M$.  So we let 
$$
[M,k||L(\Psi)]
$$
to stand for the sublinear combination of $L(\Psi)$ for which $\Pi_l(k)=M$. The first step of the proof Lemma \ref{step.improve} is to  subtract divergences
satisfying  \eqref{assnT} from $[M,k\|L(\Psi)]$ and make the resulting terms {\it normalized}  in the following sense.

\begin{definition}
\label{normalized}
A term $C^l(\Psi)$ with $\Pi_k(l)=M$ is called {\em normalized} if 
$$
\Pi''_k(l)=1,
$$
i.e., $C^l(\Psi)$ has $M-1$ contractions of type $\Psi^1_\ca\Psi^k_a$  and one contraction of type $\Psi^k_\ca\Psi^\sigma_a$.
\end{definition}
We then claim the following: 

\begin{lemma}
\label{normalize}
Let  $L(\Psi)$ be  $\calL$-acceptable and satisfy \eqref{assn3.6}. Then there exist $\mc{L}$-acceptable forms $T_a, T_{\ol{a}}$ which satisfy \eqref{assnT} so that
\beq
\label{step.eqn'}
[M,k\|L(\Psi)]=\pa_\ca T_a+\pa_a T_\ca+
[M,k\|L^{\rm norm}(\Psi)]+L^\sharp(\Psi),
\eeq
where $L^\sharp(\Psi)$ stands for a linear combination of terms in the form \eqref{contraction}, has all the properties of the terms described after 
\eqref{step.eqn}, while $[M,k\|L^{\rm norm}(\Psi)]$ has all the properties of the terms in the left-hand side  of \eqref{step.eqn'}, but in addition is normalized, in the language of Definition \ref{normalized}.
\end{lemma}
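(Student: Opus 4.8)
The plan is to follow the template already established in Lemmas \ref{lem-non-minimal}--\ref{lem.decr.A.Bad2}: first prove a \emph{weak version} of the statement, valid when the relevant factors are non-minimal, by an explicit construction of divergences that does not use the integral hypothesis on $L(\Psi)$; and then reduce the remaining, minimal, case to a \emph{new integral equation} in $\sigma-1$ factors, to which the inductive hypothesis of Proposition \ref{main.prop} (which we are proving by induction on the weight $w$) applies because the new equation has strictly smaller weight. The goal of the normalization is, for each term of $[M,k\|L(\Psi)]$, to turn the $M$ contractions between $\Psi^k$ and $\{\Psi^1,\Psi^\sigma\}$ into the configuration ``$M-1$ of type $\Psi^1_\ca\Psi^k_a$, one of type $\Psi^k_\ca\Psi^\sigma_a$.'' Accordingly I would introduce the secondary parameter $N=\max\{\Pi''_k(l):C^l(\Psi)\text{ a term of }[M,k\|L(\Psi)]\}$ and argue by downward induction on $N$ for the terms with $\Pi''_k(l)\ge 2$, and by the symmetric (conjugate) argument for the terms with $\Pi''_k(l)=0$; each such step should produce, besides divergences and a normalized remainder, only terms allowed in $L^\sharp(\Psi)$, i.e.\ with $M\{\cdot\}<M$, or with $M\{\cdot\}=M$ and $k\{\cdot\}>k$.

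\textbf{The weak version.} Fix a term $C^l(\Psi)$ with $\Pi''_k(l)=N\ge 2$ and suppose that the holomorphic index $a$ of $\Psi^\sigma$ taking part in one of the contractions $\Psi^k_\ca\Psi^\sigma_a$ is a genuine derivative index, i.e.\ $A_\sigma>\alpha_\sigma$. Erasing that index and letting $\ca$ become free on $\Psi^k$ produces an $\calL$-acceptable $(0,1)$-form $T_\ca(\Psi)$ whose free index lies on a middle factor (so not on $\Psi^1$ or $\Psi^\sigma$), with no traces and no special contractions $\Special$. Then $\pa_a T_\ca=C^l(\Psi)+(\text{corrections})$, where in each correction $\pa_a$ has hit some factor $\Psi^j$ with $j\ne\sigma$; such a correction either carries an additional derivative on a factor other than $\Psi^\sigma$ with strictly fewer $\Psi^k$--$\Psi^\sigma$ contractions (hence has improved $(M,k)$ or is closer to normalized), or, in the degenerate case $j=k$, creates a single trace on the now non-minimal $\Psi^k$, which is removed by Lemma \ref{lem:internal-trace} at the cost of a further divergence and of terms again allowed in $L^\sharp(\Psi)$. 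The conjugate construction, erasing an antiholomorphic derivative index from $\Psi^1$ when $B_1>\beta_1$, disposes of the terms with $\Pi''_k(l)=0$. Iterating, we are reduced to the case in which every non-normalized term has $\Psi^\sigma$ minimal in its holomorphic indices and $\Psi^1$ minimal in its antiholomorphic ones.

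\textbf{The minimal case.} Here one argues exactly as in the proof of Claim \ref{lem.new.int}. Starting from $\Local_1L(\Psi)=0$ (respectively $\Local_\sigma L(\Psi)=0$), isolate the sublinear combination in which, after the integrations by parts, all $M$ contractions of $\Psi^k$ against $\{\Psi^1,\Psi^\sigma\}$ have collapsed to traces inside $\Psi^k$. Since the number of traces in a given factor is invariant under the allowed formal operations, and since $\Local_1L(\Psi)=0$ holds \emph{formally} (as $n\ge\sigma-1$), this sublinear combination vanishes formally by itself; and because there are no traces in $L(\Psi)$, these terms arise solely from the terms of $[M,k\|L(\Psi)]$ we wish to normalize. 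Erasing the factor $\Psi^k$ (equivalently, the collapsed traces), and reversing the integration by parts as in \S\ref{int-by-part-sec} and Remark \ref{KEY}, yields a new tensor-valued integral equation in the remaining $\sigma-1$ factors, of strictly smaller weight, with no traces, and with a suitably decreased restriction list at $\Psi^1$ and $\Psi^\sigma$. Invoking the inductive hypothesis of Proposition \ref{main.prop}, with $\Psi^1$ as special factor and $\Psi^\sigma$ as second special factor, we write this reduced combination as $\pa_\ca\wt T_a+\pa_a\wt T_\ca$ with $\wt T$ having no traces, no special contractions, and free index off $\Psi^1,\Psi^\sigma$. Putting $\Psi^k$ back in and restoring the contracted indices onto $\psi^1$ and $\psi^\sigma$ gives $\calL$-acceptable $T_a,T_\ca$ satisfying \eqref{assnT}; collecting, one obtains the asserted identity $[M,k\|L(\Psi)]=\pa_\ca T_a+\pa_a T_\ca+[M,k\|L^{\rm norm}(\Psi)]+L^\sharp(\Psi)$, once one checks that the Leibniz terms $\wt T_a\cdot\pa_\ca\Psi^k$, $\wt T_\ca\cdot\pa_a\Psi^k$ (and the traces they may create on the non-minimal $\Psi^k$, removed via Lemma \ref{lem:internal-trace}) all fall into $L^\sharp(\Psi)$ or are normalized.

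\textbf{Main obstacle.} The essential difficulty is not conceptual but combinatorial: one must verify, at every stage and for every correction produced — both by the erasing constructions of the weak version and by the Leibniz expansion of the reconstructed divergences — that the new term is strictly better in the lexicographic order on $(M,k)$, or is normalized, or can be absorbed by Lemma \ref{lem:internal-trace}; and one must keep checking that no new trace or special contraction $\Special$ is created that cannot be so absorbed, and that all forms built along the way remain $\calL$-acceptable. The most delicate point is that erasing an index and taking a divergence momentarily creates a trace on $\Psi^k$; one needs that, because $\Psi^k$ has just acquired a derivative, Lemma \ref{lem:internal-trace} applies and returns terms still obeying the improvement estimate, so that the downward induction on $N$ — and, inside it, the induction on $w$ underlying Proposition \ref{main.prop} — closes.
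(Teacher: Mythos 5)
Your overall architecture — explicit divergence constructions when the relevant factors are non-minimal, followed by a reduction of the remaining minimal case to a new integral equation of lower weight to which the inductive hypothesis of Proposition \ref{main.prop} applies — is the same as the paper's. However, two of your steps would fail as written.

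In the weak version, your single divergence $\pa_aT_\ca$ for a term with $\Pi''_k(l)\ge 2$ is built assuming only $A_\sigma>\alpha_\sigma$. But the correction in which $\pa_a$ hits $\Psi^k$ produces a trace on a factor of type $(A_k+1,B_k)$, and Lemma \ref{lem:internal-trace} can only remove that trace by peeling off one of its two halves: peeling off the antiholomorphic half requires $B_k>\beta_k$, while peeling off the holomorphic half redistributes $\pa_a$ and, when it lands back on $\Psi^\sigma$, reproduces $C^l(\Psi)$ itself, so your iteration does not close. This is precisely why the paper's Procedure II imposes \emph{both} $B_k>\beta_k$ and $A_\sigma>\alpha_\sigma$ (and dually Procedure I imposes both $B_1>\beta_1$ and $A_k>\alpha_k$): the minimality of $\Psi^k$ is an obstruction you have not accounted for, and it is exactly the residual case that forces the hard part of the proof.

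More seriously, the minimal case rests on a combinatorial picture that is not correct. Under $\Local_1L(\Psi)=0$ only the derivative indices of $\Psi^1$ are redistributed, so only the contractions $\Specialonek$ can collapse to traces inside $\Psi^k$; the contractions $\Specialks$ do not involve $\Psi^1$ and survive as genuine contractions. The sublinear combination you propose to isolate ("all $M$ contractions of $\Psi^k$ against $\{\Psi^1,\Psi^\sigma\}$ collapsed to traces inside $\Psi^k$") therefore does not occur, and "erasing the factor $\Psi^k$" is not an operation that any such local equation supports. Even granting some $(\sigma-1)$-factor integral equation, restoring $\Psi^k$ at the end would restore all $M$ contractions exactly as they were, so the output would not be normalized — you would only have rewritten $[M,k\|L(\Psi)]$ modulo divergences, not converted it to the configuration with $\Pi''_k=1$. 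The paper instead works with $\Local_kL(\Psi)=0$, uses the creation of exactly $M$ special contractions $\Special$ (possible only because $L(\Psi)$ contains none) as the marker that isolates the contribution of $[M,k\|L(\Psi)]$, and in the hard sub-case erases $\Psi^\sigma$ together with the $(M-\tau)$ contractions $\Specialonek$ — not $\Psi^k$ — after first extremizing over the auxiliary quantities $\tau$, $b_\sigma$ and the number of contractions $\Psi^k_a\Psi^\sigma_\ca$. Those extremal parameters are what make the isolation argument (Claim \ref{last-claim}) and the acceptability of the reconstructed forms go through; they are absent from your proposal, and without them the claim that the isolated sublinear combination "arises solely from the terms we wish to normalize" is not established.
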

If we can prove the above, we are reduced to showing Lemma \ref{step.improve} under the additional assumption that all the terms in the sublinear combination 
$[M,k\|L(\Psi)]$ are normalized (as in Definition \ref{normalized}). Under that additional assumption, we claim that our Lemma \ref{step.improve} is true:

\begin{lemma}
\label{easy.step}
Assume  \eqref{assn3.6} and in addition that all terms in $ [M,k\|L(\Psi)]$ are normalized. Then \eqref{step.eqn} holds. 
\end{lemma}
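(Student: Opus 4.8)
The whole statement reduces to handling the normalized sublinear combination $[M,k\|L^{\mathrm{norm}}(\Psi)]$: every term of $L(\Psi)$ \emph{not} in $[M,k\|L(\Psi)]$ has $\Pi_l(k)<M$, and by the minimality of $k$ it also has $\Pi_l(h)<M$ for all $h<k$, so such terms already obey the requirements imposed on $L^\sharp(\Psi)$ in \eqref{step.eqn} and may be carried along unchanged. So I would fix attention on a normalized term $C^l(\Psi)$, in which there is a \emph{unique} contraction of the form $\Psi^k_\ca\Psi^\sigma_a$ (because $\Pi''_k(l)=1$). The first attempt — in the spirit of Lemma \ref{lem-non-minimal} — is the obvious one: if $\Psi^\sigma$ is non-minimal in the holomorphic direction, let $T_\ca(\Psi)$ be the sum over $l$ of $\pm a_l$ times the form obtained from $C^l(\Psi)$ by erasing the holomorphic index $a$ from $\Psi^\sigma$ in that distinguished contraction and promoting the now–unpaired anti-holomorphic index $\ca$ on $\Psi^k$ to a free index. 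By construction $T_\ca(\Psi)$ is $\calL$-acceptable, has no traces, no special contractions, and its free index lies on $\Psi^k$, so it satisfies \eqref{assnT}. Symmetrically, if $\Psi^1$ is non-minimal in the anti-holomorphic direction one instead erases an anti-holomorphic index of $\Psi^1$ in one of the $M-1$ bonds $\Psi^1_\ca\Psi^k_a$ and promotes its holomorphic partner on $\Psi^k$, producing a $T_a(\Psi)$; and more generally the same device works whenever \emph{some} non-minimal index on \emph{some} factor $Z$ in $C^l(\Psi)$ is bonded to a factor $W\notin\{\Psi^1,\Psi^\sigma\}$.

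The second step is to expand $\pa_a T_\ca(\Psi)$ by Leibniz and check that what we get is $\pm[M,k\|L^{\mathrm{norm}}(\Psi)]$ modulo legitimate lower-complexity corrections. The term in which $\pa_a$ returns to $\Psi^\sigma$ recreates the $\Psi^k_\ca\Psi^\sigma_a$ bond and so reproduces $\pm C^l(\Psi)$; summing over $l$ gives $\pm[M,k\|L^{\mathrm{norm}}(\Psi)]$. The term in which $\pa_a$ lands on $\Psi^k$ couples a new holomorphic index of $\Psi^k$ to the free anti-holomorphic index \emph{also} on $\Psi^k$, i.e.\ creates a trace there; since $\Psi^k$ now carries an extra holomorphic derivative it is non-minimal, so Lemma \ref{lem:internal-trace} absorbs this trace into a further divergence, leaving a remainder whose $\Pi$-statistics are not larger (the reshuffled derivative again lands as an \emph{uncounted} contraction). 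In every remaining Leibniz term, where $\pa_a$ lands on some $\Psi^h$ with $h\notin\{k,\sigma\}$, it produces a contraction between a holomorphic index of $\Psi^h$ and the free anti-holomorphic index of $\Psi^k$ — a type \emph{not} recorded by $\Pi'$ or $\Pi''$ — while destroying the $\Psi^k$–$\Psi^\sigma$ bond; hence for such a term $\Pi_l(k)=M-1$, and, no new $\Psi^1$–$\Psi^{h'}$ or $\Psi^{h'}$–$\Psi^\sigma$ bond having been created, $\Pi_l(h')<M$ for every $h'\le k$. Thus all corrections are admissible members of $L^\sharp(\Psi)$, and this branch of the argument is complete.

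The main obstacle is the remaining ``trapped'' case: $\Psi^1$ is minimal in the anti-holomorphic direction, $\Psi^\sigma$ is minimal in the holomorphic direction, and the surplus derivatives forced by $d>0$ are all bonded against $\Psi^1$ or $\Psi^\sigma$, so no usable $Z$–$W$ bond exists. Here the integral hypothesis has to be used, in the manner of Lemmas \ref{lem.decr.A.Bad1}–\ref{lem.decr.A.Bad2} and Claim \ref{lem.new.int}: one feeds $L(\Psi)$ into the local divergence formula $\Local_1L(\Psi)=0$ and isolates the sublinear combination in which the $M-1$ anti-holomorphic derivatives of $\psi^1$ coming from the bonds $\Psi^1_\ca\Psi^k_a$ are all forced onto $\Psi^k$, producing $M-1$ traces there; because $L(\Psi)$ has no traces and no special contractions, this sublinear combination vanishes \emph{formally on its own} (the number of traces in a given factor is invariant under the admissible formal operations). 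In practice this extraction must be organized as a short cascade of maximizations, in the style of the $M_1,\dots,M_5$ bookkeeping of \S\ref{Pf.no.trace}, so that the piece picked out is genuinely characterized by formal data and has top part $[M,k\|L^{\mathrm{norm}}(\Psi)]$ modulo lower-complexity terms. One then erases the traces, reverses the integrations by parts as in \S\ref{int-by-part-sec} to obtain a new integral equation of strictly lower weight, and invokes the inductive hypothesis of Proposition \ref{main.prop}(i) with $\Psi^k$ as the special factor; restoring the erased structure and multiplying back by the removed factor exhibits $[M,k\|L^{\mathrm{norm}}(\Psi)]$ as $\pa_\ca T_a+\pa_a T_\ca$ with $T_a,T_\ca$ obeying \eqref{assnT}, up to corrections of strictly smaller $\Pi$-statistics. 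The delicate point throughout — and the real heart of the proof — is the verification that \emph{every} correction term, whether it arises from the Leibniz expansion, from Lemma \ref{lem:internal-trace}, or from the restoration step, satisfies either $M\{L^\sharp(\Psi)\}<M$ or else $M\{L^\sharp(\Psi)\}=M$ with the maximum attained only at some $h'>k$; granting this, \eqref{step.eqn} holds, and iterating on the lexicographic pair $(M,k)$ yields Lemma \ref{step.improve} and hence Proposition \ref{prop.final.step}.
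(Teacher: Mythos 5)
Your reduction of the terms outside $[M,k\|L(\Psi)]$ to the remainder $L^\sharp(\Psi)$ is correct, and you have rightly identified that the heart of the matter is a new, lower-weight integral equation obtained from a local divergence formula, to which the inductive assumption of Proposition \ref{main.prop} is applied. However, your first branch --- the explicit divergence construction when $A_\sigma>\alpha_\sigma$ --- is circular. In the Leibniz expansion of $\pa_a T_\ca(\Psi)$ the derivative $\pa_a$ can land back on $\Psi^k$, where it contracts against the free index $\ca$ and creates a trace on a now non-minimal $\Psi^k$. When you remove that trace by Lemma \ref{lem:internal-trace}, the holomorphic derivative is redistributed over the remaining factors and contracts against the $\ca$ still sitting on $\Psi^k$; the term in which it lands on $\Psi^\sigma$ restores the bond $\Psi^k_\ca\Psi^\sigma_a$ and is \emph{exactly} the original term $C^l(\Psi)$. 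So your parenthetical claim that ``the reshuffled derivative again lands as an uncounted contraction'' fails precisely for this term: the identity you obtain reads $C^l=\pa_a(\cdots)+C^l+(\text{admissible terms})$ and carries no information. The same defect afflicts the symmetric construction on a $\Specialonek$ bond.

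Your treatment of the remaining ``trapped'' case also deviates from what actually works, and is left as a sketch at the one point where the argument is delicate. You propose to use $\Local_1L(\Psi)=0$ and isolate terms with $M-1$ traces on $\Psi^k$; but traces on $\Psi^k$ in $\Local_1L(\Psi)$ also arise from the \emph{uncounted} contractions $\Psi^1_a\Psi^k_\ca$, so that sublinear combination is not exclusively generated by $[M,k\|L(\Psi)]$, and maximizing over trace counts alone will not separate the two sources. The paper instead applies $\Local_kL(\Psi)=0$ and isolates the terms with exactly $M$ special contractions $\Special$: since $L(\Psi)$ contains none to begin with, each such contraction must be manufactured from a $\Specialonek$ or a $\Specialks$ bond (the $a$-derivative of the former hitting $\Psi^\sigma$, the $\ca$-derivative of the latter hitting $\Psi^1$), and since $M$ is the maximum of $\Pi_l(k)$ this pins the contributing terms down to $[M,k\|L(\Psi)]$ exactly, with normalization determining which $M-1$ bonds are of which type. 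Erasing all $M$ bonds yields the integral equation \eqref{int.new}, and --- crucially --- the induction is invoked with \emph{both} $\Psi^1$ and $\Psi^\sigma$ as special factors, using clause (ii) of Proposition \ref{main.prop}, so that the resulting $T_a,T_\ca$ contain no contractions $\Special$, no traces, and no free index on $\Psi^1$ or $\Psi^\sigma$; this is what guarantees that adding the $M$ bonds back produces no new traces or special contractions. Your invocation of clause (i) with only $\Psi^k$ special is too weak for that restoration step. No case split is needed: the $\Local_k$ argument handles all normalized terms uniformly.
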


{\it Proof of Lemma \ref{easy.step}:} 
For reference purposes, we explicitly write out $ [M,k\|L(\Psi)]$ as a linear combination of partial contractions:
\beq
\label{sum}
[M,k\|L(\Psi)]:=\sum_{r\in R} a_r C^r(\Psi).
\eeq
We then let $\wt{C}^r(\Psi)$ to stand for the new partial contraction that arises from $C^r(\Psi)$ by erasing the $(M-1)$ contractions 
$\Psi^1_\ca\Psi^k_a$ and the one contraction of type $\Psi^k_\ca\Psi^\sigma_a$. (Notice that the resulting terms have weight $(w-M)$). We then claim a new integral equation:
\beq
\label{int.new}
\int \sum_{r\in R} a_r \wt{C}^r(\Psi)=0.
\eeq
The terms in the above integral equation  are $\mc{L}'$-acceptable with respect to the restriction list:
$$
\mc{L}'=(\alpha_1,\beta'_1,\alpha_2,\dots, \alpha'_k,\beta'_k,\dots,\alpha'_\sigma,\beta_\sigma),
$$
where $\beta'_1=\max\{ \beta_1-M+1,0\}$, $\alpha'_k=\max\{\alpha_k-M+1,0 \}$, $\beta'_k=\max\{\beta_k-1,0\}$, $\alpha'_\sigma:=\max\{\alpha_\sigma-1,0\}$. 
We postpone the proof of \eqref{int.new} for a moment, to see how it implies our Lemma: 

We can invoke the inductive assumption of Proposition \ref{main.prop} to the above, with the extra restriction being applicable with the factors 
$\Psi^1,\Psi^\sigma$ being the first/second special factors. 
We derive that there exist $\mc{L}'$-acceptable forms $T_a(\Psi), T_{\ol{a}}(\Psi)$ satisfying \eqref{assnT} so that:
\beq
\sum_{r\in R} a_r \wt{C}^r(\Psi)=\pa_\ca T_a(\Psi)+\pa_a T_\ca(\Psi).
\eeq
Now, we use the fact that the above equation holds formally to add the missing $(M-1)$ contractions of the form $\Psi^1_\ca\Psi^k_a$ and the one 
contraction of the form $\Psi^k_\ca\Psi^\sigma_a$. The result is a new true equation which is precisely \eqref{step.eqn}. 

Thus matters are reduced to deriving \eqref{int.new}. We prove this by applying the $k$-local divergence formula to $\int L(\Psi)=0$, deriving: $\Local_k L(\Psi)=0$. 

Now, pick out the sublinear combination in the above with exactly $M$ contractions of the form $\Special$.  The resulting expression is denoted by 
 $[M\Special\|\Local_k\!$ $L(\Psi)]$, which  vanishes separately:
$$
[M\Special\|\Local_kL(\Psi)]=0.
$$
\begin{claim}\label{claim1}
Each term of $[M\Special\|\Local_kL(\Psi)]$ arises exclusively from 
$ [M,k\|L(\Psi)]$ after the integrations by parts in $\Local_kL(\Psi)$ by the following  procedures:
\begin{enumerate}
\item The $(M-1)$ holomorphic indices in $\Psi^k$ that contract against $\Psi^1$ are forced to hit $\Psi^\sigma$;
\item The one antiholomorphic index in $\Psi^k$ that contracts against $\Psi^\sigma$
is   forced to hit $\Psi^1$;
\item All the remaining indexes in $\Psi^k$ are allowed to hit any factor.
\end{enumerate}
\end{claim}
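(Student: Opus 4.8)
The plan is to analyze, purely combinatorially, how a special contraction $\Special$ can be produced when all the derivatives carried by $\psi^k$ in a term $C^l(\Psi)$ of $L(\Psi)$ are integrated by parts. Recall that $\Local_kL(\Psi)$ is obtained by writing each term as $\psi^k_\calI\cdot\pcontr^l_{\ol\calI}(\Psi^{(k)})$ (where $\Psi^{(k)}$ denotes the collection of factors $\Psi^j$, $j\ne k$), integrating by parts the $\pa_\calI$, and expanding by the Leibniz rule; so every term produced from $C^l(\Psi)$ arises by \emph{redistributing} each of the $A_k+B_k$ derivative indices that sat on $\psi^k$ onto one of the factors $\Psi^j$ with $j\ne k$, while the conjugate index stays on whichever factor of $\Psi^{(k)}$ held it in $C^l(\Psi)$. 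Because $L(\Psi)$ has no traces, each such index of $\psi^k$ was contracted in $C^l(\Psi)$ against a genuine other factor $\Psi^{j'}$, $j'\ne k$ (a redistributed index may happen to land back on $\Psi^{j'}$, creating a trace, but such a term is irrelevant below).

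First I would isolate the orientation dichotomy that drives the argument: a special contraction $\Psi^1_\ca\Psi^\sigma_a$ in an output term can be created by a single redistribution \emph{only} in one of two ways — (A) a holomorphic index $a$ of $\psi^k$ which in $C^l(\Psi)$ was contracted against an antiholomorphic index on $\Psi^1$ (i.e.\ part of a contraction $\Psi^1_\ca\Psi^k_a$) is moved onto $\Psi^\sigma$; or (B) an antiholomorphic index $\ca$ of $\psi^k$ which was contracted against a holomorphic index on $\Psi^\sigma$ (i.e.\ part of $\Psi^k_\ca\Psi^\sigma_a$) is moved onto $\Psi^1$. Since among the two indices of a contraction only the one on $\psi^k$ is moved and the other is fixed, the prescribed placement — antiholomorphic on $\Psi^1$, holomorphic on $\Psi^\sigma$ — forces exactly these two scenarios; redistributing any other index of $\psi^k$, or landing on any other factor, cannot produce a $\Special$. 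Since moreover $L(\Psi)$ contains no special contractions to begin with, \emph{every} $\Special$ in an output term is created in this way, and distinct ones use distinct redistributed indices of $\psi^k$, hence distinct ``pre-special'' contractions — of type (A) or (B) — inside $C^l(\Psi)$.

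Next I would count. The number of pre-special contractions of types (A), (B) in $C^l(\Psi)$ equals $\Pi'_l(k)+\Pi''_l(k)=\Pi_l(k)$. Producing $M$ special contractions requires activating $M$ of them, so $\Pi_l(k)\ge M$; but $M=M\{L(\Psi)\}$ is the maximum of $\Pi_l(h)$ over all $l$ and all $h\in\{2,\dots,\sigma-1\}$, so $\Pi_l(k)=M$, i.e.\ $C^l(\Psi)\in[M,k\|L(\Psi)]$. Thus terms of $L(\Psi)$ with $\Pi_l(k)<M$ contribute nothing to $[M\Special\|\Local_kL(\Psi)]$, which is the ``arises exclusively from $[M,k\|L(\Psi)]$'' assertion. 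Then, using the standing normalization hypothesis $\Pi''_k(l)=1$ (hence $\Pi'_k(l)=M-1$), the requirement that all $M$ pre-special contractions be activated forces precisely the three procedures of the claim: each of the $M-1$ holomorphic indices of $\Psi^k$ contracting against $\Psi^1$ must be sent to $\Psi^\sigma$ (procedure (1)); the unique antiholomorphic index of $\Psi^k$ contracting against $\Psi^\sigma$ must be sent to $\Psi^1$ (procedure (2)); and the remaining derivative indices of $\psi^k$ are redistributed arbitrarily (procedure (3)) — where, again by the orientation dichotomy, moving such a remaining index onto $\Psi^1$ or $\Psi^\sigma$ never produces an extra $\Special$, so the output still has exactly $M$. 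The converse — that any redistribution obeying (1)--(3) out of a normalized term of $[M,k\|L(\Psi)]$ yields a term with exactly $M$ special contractions — is immediate, giving the exact description.

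I expect the main obstacle to be nothing more than disciplined bookkeeping: tracking holomorphic versus antiholomorphic indices through the redistribution and checking exhaustively that no ``accidental'' special contraction arises, either from a redistributed index that was not pre-special or from over-activation beyond the $M$ guaranteed by the maximality of $M\{L(\Psi)\}$. Beyond this, the only structural inputs are the maximality of $M$ and the normalization hypothesis; no integral equation and no appeal to the inductive hypothesis of Proposition \ref{main.prop} are needed for the claim itself, since it concerns only the formal expansion of $\Local_kL(\Psi)$.
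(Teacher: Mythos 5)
Your proposal is correct and follows essentially the same route as the paper: both arguments rest on the observation that, since $L(\Psi)$ has no special contractions, every $\Special$ in the output must be created by moving one end of a pre-existing contraction $\Specialonek$ or $\Specialks$, then invoke the maximality of $M$ to force $\Pi_l(k)=M$ and the normalization hypothesis to split these as $M-1$ of the first type and one of the second. Your version merely spells out the Leibniz-rule bookkeeping in more detail; no substantive difference.
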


We postpone the proof of Claim \ref{claim1} for  a minute to see how it implies \eqref{int.new}. 
Let $R^{M,k}\subset R$ to be the index set of terms belonging to  $[M,k\|L(\Psi)]$ and let $\wh C ^r(\Psi)$ stands for the complete contraction that arises from $C^r(\Psi)$ by formally replacing the expression:
$$
\Psi^1_{\ol{s}_1\dots\ol{s}_{M-1}\calI}\cdots \Psi^k_{s_1\dots s_k\ol{t}\calJ}\cdots \Psi^\sigma_{t\calK}
$$
by 
$$
\Psi^1_{\ol{s}_1\dots\ol{s}_{M-1}\ol{t}\calI}\cdots \Psi^k_{\calJ}\cdots \Psi^\sigma_{s_1\dots s_kt\calK}.
$$
Then Claim \ref{claim1} implies that:
$$
(0=)[M\Special\|\Local_kL(\Psi)]=\Local_k\Bigg(
\sum_{r\in R^{M,k}} a_r \wh C^r(\Psi)\Bigg).
$$
Thus, erasing the $M$ indices $\cs_1\dots\cs_{M-1}\ct$ from $\Psi^1$ and $s_1\dots s_k t$ from $\Psi^\sigma$ in the above local equation and then integrating over $\mathbb{C}^n$ and formally integrating by parts again yields precisely \eqref{int.new}.

So matters are reduced to showing Claim \ref{claim1}. Recall that by the assumption \eqref{assn3.6}, there are no special contractions $\Special$ in {\it any} term in $L(\Psi)$.
Thus for any term in $\Local_kL(\Psi)$ (arising from a term $C^r(\Psi)$) with $M$ such contractions, each such contraction must have arisen from a contraction in $C^r(\Psi)$ of either $\Psi^1_\ca\Psi^k_a$ or $\Psi^k_\ca\Psi^\sigma_a$. By definition of $M\{ L(\Psi)\}$ and $k\{L(\Psi)\}$ there can be at most $M$ such contractions, and the terms with exactly $M$ such contractions must belong to $ [M,k\|L(\Psi)]$. Moreover, by the assumption of our Lemma, there can only be exactly $M-1$ such contractions of type $\Psi^1_\ca\Psi^k_a$ and one such contraction of type $\Psi^k_\ca\Psi^\sigma_a$. Now Claim \ref{claim1} follows immediately. This concludes the proof of Lemma \ref{easy.step}. \qed

\subsubsection{Proof of Lemma \ref{normalize}. }
We show this Lemma by a new iteration:

For each $C^l(\Psi)$ in $[M,k\|L(\Psi)]$, let 
\[
\tau_l=|\Pi''_k(l)-1|+1.
\]
Hence $C^l(\Psi)$ is normalized if and only if $\tau_l=1$. We claim that if we let 
$$
\tau=\max\{\tau_l:l\in \Lambda^{M,k}\}
$$
and we let $[\tau\Specialks\|M,k\|L(\Psi)]$ be the corresponding sublinear combination, then there exist one forms $T_a(\Psi)$ and $T_\ca(\Psi)$ satisfying \eqref{assnT} so that:
\beq
\label{iteration}
\begin{aligned}[]
[\tau\Specialks\|M,k\|L(\Psi)]=&\pa_\ca T_a(\Psi)+\pa_aT_\ca(\Psi)\\
&+{[M,k\|L(\Psi)]}^\sharp+
L^{\rm new}(\Psi),
\end{aligned}
\eeq  
where $L^{\rm new}(\Psi)$  stands for a generic linear combination as in Lemma \ref{normalize}, while 
$$
{[M,k\|L(\Psi)]}^\sharp=\sum_{l\in \wt{\Lambda}}a_lC^l(\Psi)
$$ 
stands for a linear combination of terms with all the properties of the terms in $[M,k\|L(\Psi)]$ {\it and in addition} $\tau_l<\tau$ for all $l\in \wt{\Lambda}$. If we can show this, then  iterating this step we derive Lemma \ref{normalize} and thus Proposition \ref{prop.final.step} also. So the rest of this section is devoted to showing \eqref{iteration}.
\medskip

We have two explicit constructions of divergences at our disposal which we will apply whenever the $(0,1)$ and $(1, 0)$-forms that we construct are 
$\mc{L}$-acceptable; these are similar in spirit to the constructions in Lemma \ref{lem-non-minimal}. 

Consider any term $C^l(\Psi)$ in $ [M,k\|L(\Psi)]$.  If $\Pi''_k(l)=1$, the term $C^l(\Psi)$ is normalized.  Otherwise, there are two cases we have to deal with:
$\Pi''_k(l)=0$ and  $\Pi''_k(l)>1$.

\medskip
\noindent
{\bf Procedure I.} Let $\Pi''_k(l)=0$ and further assume that
\beq\label{nonminimal1}
B_1>\beta_1\quad\text{ and}\quad 
A_k>\alpha_k.
\eeq

Let $T_a(\Psi)$ be the $(1,0)$-partial contraction obtained by erasing $\ca$ on $\Psi^1$ from $C^l(\Psi)$. Let also $T_\ca(\Psi)$ be the $(0,1)$-partial contraction that arises from $-T_a(\Psi)$ by changing the holomorphic index $a$ into an anti-holomorphic index $\ca$. $T_a$ and $T_\ca$ are $\calL$-acceptable by the assumption
\eqref{nonminimal1}.
Note that by construction, the forms $T_a$, $T_\ca$ thus constructed have no traces, no special contractions, and the free index does not belong to $\Psi_1$ or $\Psi_\sigma$. We also observe that we can write:
$$
C^l(\Psi)=\pa_\ca T_a(\Psi)+\pa_a T_\ca(\Psi)+\wt C^l(\Psi)+L^{\sharp,l}(\Psi),
$$
where $\wt C^l(\Psi)$ is a complete contraction that arises from $C^l(\Psi)$ by formally replacing the expression:
$$
\Psi^1_{\ca \calI}\cdots \Psi^k_{a\calJ}\cdots\Psi^\sigma_{\calK}
$$
by a new expression:
$$
\Psi^1_{\calI}\cdots\Psi^k_{\ca \calJ}\cdots\Psi^\sigma_{a\calK},
$$
which is normalized. Also $L^{\sharp,l}(\Psi)$ stands for a linear combination of terms as in $L^\sharp(\Psi)$ 
of \eqref{step.eqn'}.

\medskip
\noindent
{\bf Procedure II.} Let $\Pi''_k(l)>1$ and further assume that
\beq\label{nonminimal2}
B_k>\beta_k\quad\text{ and}\quad 
A_\sigma>\alpha_\sigma.
\eeq

In the term $C^l(\Psi)$, pick a contraction of type $\Psi^k_\ca\Psi^\sigma_a$ (there are at least two such contractions). Let $T_a(\Psi)$ be the $(1,0)$-partial 
contraction obtained from $-C^l(\Psi)$ by easing $a$ on $\Psi^\sigma$. Let also $T_\ca(\Psi)$ be the $(0,1)$-partial contraction that arises from $-T_a(\Psi)$ by changing the holomorphic index $a$ into an anti-holomorphic index $\ca$. By the assumption \eqref{nonminimal2}, $T_a$ and $T_\ca$ are $\calL$-acceptable.  Note that by construction, the forms $T_s$, $T_\ca$ thus constructed have no traces, no special contractions, and the free index does not belong to $\Psi^1$ or $\Psi^\sigma$.  We also observe that we can write: 
$$
C^l(\Psi)=\pa_\ca T_a(\Psi)+\pa_a T_\ca(\Psi)+\wt C^l(\Psi)+L_l^{\sharp}(\Psi),
$$
where $\wt C^l(\Psi)$ is a complete contraction that arises from $C^l(\Psi)$ by formally replacing the expression:
$$
\Psi^1_{A\dots B}\cdots\Psi^k_{\cs C\dots D}\cdots\Psi^\sigma_{sE\dots F}
$$
by a new expression:
$$
\Psi^1_{\cs A\dots B}\cdots\Psi^k_{sC\dots D}\cdots\Psi^\sigma_{E\dots F},
$$
which has fewer contractions of type $\Psi^k_\ca\Psi^\sigma_a$. Also $L_l^{\sharp}(\Psi)$ stands for a linear combination of terms as in $L^\sharp(\Psi)$ of \eqref{step.eqn'}. We can repeat this procedure until $\wt C^l(\Psi)$ becomes normalized as long as \eqref{nonminimal2} holds.

Using Procedures I and II, we can show \eqref{iteration} except if $[\tau\Specialks\|M,k\|L(\Psi)]$ contains terms with one of the following two properties:
\begin{itemize}
\item[(I)] 
$B_1=\beta_1$ or  $A_k=\alpha_k$, and  moreover $\Pi''_k(l)= 0$;
\item[(II)] 
$B_k=\beta_k$ or  $A_\sigma=\alpha_\sigma$, and moreover $\tau\ge 2$.
\end{itemize}
We are thus reduced to the setting where there are \emph{only} such terms in $[\tau\Specialks\|M,$ $k\|L(\Psi)]$. 

We consider the case where there exist terms as in (II) above. (The case of contractions as in (I) follows by an entirely analogous argument, which we skip.) We denote by  
\[
[\alpha_\sigma\| \tau\Specialks\|M,k\|L(\Psi)]
\]
the sublinear combination consisting of terms in $[\tau\Specialks\|M,k\|L(\Psi)]$ with $A_\sigma=\alpha_\sigma$.  We claim

\begin{lemma}
\label{lem.decr.A.Bad1'}
There exist $\mc{L}$-acceptable forms $T_a(\Psi), T_\ca(\Psi)$ satisfying \eqref{assnT} such that 
\beq
\label{eq.decr.A.Bad1'}
[\alpha_\sigma\| \tau\Specialks\|M,k\|L(\Psi)]
=\pa_\ca T_a(\Psi)+\pa_a T_\ca(\Psi)+L^{
\alpha_\sigma+}(\Psi),
\eeq
where $L^{\alpha_\sigma+}(\Psi)$ is a linear combination of  $\mc{L}$-acceptable complete contractions which contain no traces, no contractions
of the form $\Special$,   $\tau$  contractions of the form $\Specialks$ and $A_\sigma>\alpha_\sigma$. 
\end{lemma}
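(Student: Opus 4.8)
The plan is to adapt the proof of Lemma~\ref{lem.decr.A.Bad1}, with $\Psi^k$ now playing the role that $\Psi^1$ played there in its interaction with $\Psi^\sigma$, the number $\tau$ of contractions $\Specialks$ playing the role of $M$, and $\Local_k$ replacing $\Local_1$; the factor $\Psi^1$ is carried along throughout merely as the special factor, its $M-\tau$ contractions $\Psi^1_\ca\Psi^k_a$ being simply preserved. First, among the terms of $[\alpha_\sigma\|\tau\Specialks\|M,k\|L(\Psi)]$ let $b_\sigma=\min B_\sigma$ and pass to the sublinear combination with $B_\sigma=b_\sigma$; as in the reduction to \eqref{delta.step}, it suffices to write this combination as a divergence satisfying \eqref{assnT} modulo terms with $(A_\sigma,B_\sigma)$ lexicographically above $(\alpha_\sigma,b_\sigma)$, and then iterate. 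Next let $N$ be the maximal number of contractions $\Psi^k_a\Psi^\sigma_\ca$ among these terms, and restrict to the sublinear combination $L^{N,b_\sigma,\alpha_\sigma,\tau,M,k}(\Psi)$ with exactly $N$ of them; each of its terms then has precisely $\tau+N$ contractions between $\Psi^k$ and $\Psi^\sigma$, and again it suffices to treat this combination and iterate over $N$, as in the reduction to \eqref{delta.step2}.

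The core step is to produce a new integral equation by the argument of Claim~\ref{lem.new.int}. From $\int L(\Psi)=0$ we obtain the formal identity $\Local_k L(\Psi)=0$; since no term of $L(\Psi)$ has a trace, a trace can appear in the factor $\Psi^\sigma$ only when a derivative of $\psi^k$ that was contracted against $\Psi^\sigma$ lands on $\Psi^\sigma$. Hence the sublinear combination $[\Delta^{\tau+N}\Psi^\sigma\|\Local_kL(\Psi)]$, in which $\Psi^\sigma=\pa^{(\alpha_\sigma+N,b_\sigma+\tau)}\psi^\sigma$ carries $\tau+N$ traces, vanishes separately, and by the maximality analysis of Claim~\ref{lem.new.int} --- now also using the maximality of $\tau=\max\tau_l$ and the definitions of $M\{L(\Psi)\}$ and $k\{L(\Psi)\}$ --- it arises exclusively from $L^{N,b_\sigma,\alpha_\sigma,\tau,M,k}(\Psi)$ by forcing all $\tau+N$ contractions between $\Psi^k$ and $\Psi^\sigma$ to become traces in $\Psi^\sigma$, with every other derivative of $\psi^k$ then barred from $\Psi^\sigma$. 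Erasing those traces and the factor $\Psi^\sigma$, erasing the free indices that thereby fall on $\Psi^k$, and reversing the integrations by parts onto $\psi^k$ (reading the resulting tensor-valued identity through its scalar components, as in Remark~\ref{KEY}) yields
$$
\int[\erase\Psi^\sigma\|L^{N,b_\sigma,\alpha_\sigma,\tau,M,k}(\Psi)]=0,
$$
whose integrand is an $\calL'$-acceptable invariant of the $\sigma-1$ factors $\Psi^1,\dots,\Psi^{\sigma-1}$ --- $\calL'$ being $\calL$ with the restriction on $\Psi^k$ lowered by $(N,\tau)$ (and on $\Psi^1$ by any contractions it had with $\Psi^\sigma$) --- of strictly smaller weight, with no traces, and still carrying the $M-\tau$ contractions $\Psi^1_\ca\Psi^k_a$.

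Invoking the inductive assumption of Proposition~\ref{main.prop}(i) with $\Psi^1$ as special factor (there being no second special factor once $\Psi^\sigma$ has been erased), we write this integrand as $\pa_\ca\wt T_a(\Psi')+\pa_a\wt T_\ca(\Psi')$ with no traces and free index off $\Psi^1$. Reintroducing $\Psi^\sigma=\pa^{(\alpha_\sigma,b_\sigma)}\psi^\sigma$, contracting its indices against the free indices of those tensors (re-adding derivatives on $\psi^k$, and on $\psi^1$ where needed), and formally restoring the $\tau+N$ traces, we obtain --- exactly as in the passage from \eqref{delta.setp} to \eqref{delta.setp'} ---
$$
L^{N,b_\sigma,\alpha_\sigma,\tau,M,k}(\Psi)=\pa_\ca T_a(\Psi)+\pa_a T_\ca(\Psi)+\wt T_a(\Psi')\pa_\ca\Psi^\sigma+\wt T_\ca(\Psi')\pa_a\Psi^\sigma,
$$
with $T_a=\wt T_a(\Psi')\Psi^\sigma$ and $T_\ca=\wt T_\ca(\Psi')\Psi^\sigma$. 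By construction $T_a,T_\ca$ are $\calL$-acceptable and satisfy \eqref{assnT}, while the two correction terms carry an extra derivative on $\Psi^\sigma$ --- so $(A_\sigma,B_\sigma)>(\alpha_\sigma,b_\sigma)$ lexicographically --- still have exactly $\tau$ contractions of type $\Specialks$ (an extra derivative on $\Psi^\sigma$ does not change that count), no contractions $\Special$ (the reconstructed $\Psi^\sigma$ meets $\Psi^1$ only through the re-added derivatives, never as $\Psi^1_\ca\Psi^\sigma_a$), and no traces; hence they are of the form permitted in $L^{\alpha_\sigma+}(\Psi)$. Iterating over $N$ and then over $(A_\sigma,B_\sigma)$ gives \eqref{eq.decr.A.Bad1'}.

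The main obstacle I anticipate is the combinatorial claim of the second paragraph, the analogue of Claim~\ref{lem.new.int}: one must show that $[\Delta^{\tau+N}\Psi^\sigma\|\Local_kL(\Psi)]$ comes only from the intended sublinear combination of $L(\Psi)$ and that reversing the integrations by parts reconstructs precisely those terms modulo the stated $L^{\alpha_\sigma+}$-errors. This requires juggling simultaneously the absence of traces in $L(\Psi)$, the maximality of both $N$ and $\tau$, and the minimality of $k=k\{L(\Psi)\}$, together with the extra bookkeeping forced by the presence of the special factor $\Psi^1$ and by the requirement that the correction terms retain exactly $\tau$ contractions of type $\Specialks$. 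Everything else is a faithful transcription of the proof of Lemma~\ref{lem.decr.A.Bad1}.
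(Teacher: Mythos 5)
Your overall architecture (reduce to fixed $b_\sigma$, then to fixed $N$, derive a new integral equation from $\Local_kL(\Psi)=0$, apply the inductive hypothesis, reconstruct) matches the paper's, but the crucial combinatorial step is carried out in a way that fails. You isolate the sublinear combination of $\Local_kL(\Psi)$ in which $\Psi^\sigma$ has type $(\alpha_\sigma+N,b_\sigma+\tau)$ and carries $\tau+N$ traces, i.e.\ you send \emph{all} $\tau+N$ contractions between $\Psi^k$ and $\Psi^\sigma$ onto $\Psi^\sigma$. A trace count cannot distinguish a trace coming from $\Psi^k_\ca\Psi^\sigma_a$ from one coming from $\Psi^k_a\Psi^\sigma_\ca$, and the extremality of $\tau$, $b_\sigma$, $N$ that you invoke holds only \emph{within} the nested sublinear combinations of $[M,k\|L(\Psi)]$. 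A term of $L(\Psi)$ with, say, $\Pi'_k(l)=0$ and $\Pi''_k(l)=\tau+1$ has $\Pi_k(l)=\tau+1$, which may be $<M$, so the term lies outside $[M,k\|L(\Psi)]$ and is unconstrained by $\tau$ or $b_\sigma$; if it has $A_\sigma=\alpha_\sigma+1$, $B_\sigma=b_\sigma-1$ and $N-1$ contractions $\Psi^k_a\Psi^\sigma_\ca$, it contributes to exactly the same sublinear combination and contaminates your new integral equation. The paper avoids this by routing the $\tau$ contractions $\Specialks$ onto $\Psi^1$ and the $M-\tau$ contractions $\Specialonek$ onto $\Psi^\sigma$, so that the selected terms of $\Local_kL(\Psi)$ carry $N$ traces \emph{and} $M$ special contractions $\Special$; since $L(\Psi)$ contains no $\Special$ and $M$ is the \emph{global} maximum of $\Pi_k(l)$, this forces the source term to have $\Pi_k(l)=M$, and only then do the holomorphic and anti-holomorphic index counts on $\Psi^\sigma$ pin down $\tau_l=\tau$, $A_\sigma=\alpha_\sigma$, $B_\sigma=b_\sigma$ and the $N$ contractions $\Psi^k_a\Psi^\sigma_\ca$.

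There is a second gap at the inductive step: you keep the $M-\tau$ contractions $\Psi^1_\ca\Psi^k_a$ in the new integrand and invoke only part (i) of Proposition \ref{main.prop} with $\Psi^1$ as special factor. Part (i) does not prevent the free index $\ca$ of $\wt T_\ca$ from sitting on $\Psi^k$, nor does it prevent $\wt T_a,\wt T_\ca$ from containing additional contractions $\Psi^1_\cs\Psi^k_s$. In the first case the correction term $\wt T_\ca(\Psi')\pa_a\Psi^\sigma$ acquires a new contraction $\Psi^k_\ca\Psi^\sigma_a$; in either case the reconstructed terms have $\Pi_k(l)>M$ (or $\tau_l>\tau$), are not admissible in $L^{\alpha_\sigma+}(\Psi)$, and would derail the outer induction of Lemma \ref{step.improve}. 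This is precisely why the paper \emph{erases} the $M-\tau$ contractions $\Psi^1_\ca\Psi^k_a$ before forming the new integral equation and then invokes part (ii) with $\Psi^1$ and $\Psi^k$ as first and second special factors: that guarantees the free indices avoid $\Psi^k$ and no contractions $\Psi^1_\cs\Psi^k_s$ appear, after which exactly $M-\tau$ of them are added back.
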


Using this lemma, we may assume
$$
[\alpha_\sigma\| \tau\Specialks\|M,k\|L(\Psi)]=0.
$$
In this setting, let $[\beta_k\| \tau\Specialks\|M,k\| L(\Psi)]$ be the sublinear combination of terms in $[\tau\Specialks\|M,k\| L(\Psi)]$ with $B_k=\beta_k$. 
Then we claim

\begin{lemma}
\label{lem.decr.A.Bad2'}
There exist
$\mc{L}$-acceptable forms $T_a(\Psi)$, $T_\ca(\Psi)$  satisfying \eqref{assnT} such that
\beq
\label{eq.decr.A.Bad2'}
[\beta_k\| \tau\Specialks\|M,k\|L(\Psi)]
=\pa_\ca T_a(\Psi)+\pa_a T_\ca(\Psi)+
L^{\beta_k+}(\Psi),
\eeq
where $L^{\beta_k+}(\Psi)$ is a linear combination of  $\mc{L}$-acceptable complete contractions which contain no traces, no contractions of the form $\Special$,   $\tau$  contractions of the form $\Specialks$ {\it and} $A_\sigma>\alpha_\sigma$ {\it and} $B_k >\beta_k$.
\end{lemma}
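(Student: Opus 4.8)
\emph{Proof proposal.} The plan is to reproduce, essentially verbatim, the proof of Lemma~\ref{lem.decr.A.Bad1'}, with $\Psi^k$ now playing the role that $\Psi^\sigma$ played there (and its anti-holomorphic indices playing the role of the holomorphic indices of $\Psi^\sigma$); correspondingly $\Psi^\sigma$ is demoted to a passive second special factor, while $\Psi^1$ remains the first special factor. This is the same ``mirror'' device by which Lemma~\ref{lem.decr.A.Bad2} was deduced from Lemma~\ref{lem.decr.A.Bad1}. Concretely, inside $[\beta_k\|\tau\Specialks\|M,k\|L(\Psi)]$ I would first pass to the sublinear combination in which $A_k$ attains its minimal value $a_k$, and then to the sublinear combination in which the number $N$ of ``reverse'' contractions $\Psi^k_a\Psi^\sigma_\ca$ is maximal, obtaining a piece $L^{N,a_k,\beta_k,\tau}(\Psi)$. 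It then suffices to write $L^{N,a_k,\beta_k,\tau}(\Psi)=\pa_\ca T_a(\Psi)+\pa_a T_\ca(\Psi)$ modulo terms with $(A_\sigma,B_k)$ lexicographically larger than $(\alpha_\sigma,\beta_k)$, and to iterate on the pair $(N,a_k)$.

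The crux is to manufacture a new integral equation by erasing the factor $\Psi^k$. Exactly as in the proof of Claim~\ref{lem.new.int} (but applying the local divergence formula to the factor $\psi^\sigma$, mirroring the use of $\psi^k$ in the proof of Lemma~\ref{lem.decr.A.Bad1'}), I would isolate the sublinear combination of the resulting local equation in which the factor descending from $\psi^k$ carries a maximal number of traces. Since $L(\Psi)$ contains no traces and no special contractions $\Special$, and since $M$, $N$, $\tau$ are maximal/normalized by construction, this sublinear combination can only arise from $L^{N,a_k,\beta_k,\tau}(\Psi)$, each contraction on $\Psi^k$ against $\Psi^\sigma$ being forced to close into a trace on $\Psi^k$ while the contractions against $\Psi^1$ become free indices; it therefore vanishes formally. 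Erasing the traced factor, turning the indices contracted against $\Psi^k$ into free indices, erasing those that lie on the two distinguished factors $\Psi^1,\Psi^\sigma$, and then integrating and reversing the integrations by parts as in \S\ref{int-by-part-sec}, I would arrive at $\int[\erase\Psi^k\|L^{N,a_k,\beta_k,\tau}(\Psi)]=0$ in the same dimension $n\ge\sigma-1$.

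To this new equation I would apply the inductive hypothesis of Proposition~\ref{main.prop}: the combination $[\erase\Psi^k\|L^{N,a_k,\beta_k,\tau}(\Psi)]$ is $\mc{L}'$-acceptable for an appropriate restriction list $\mc{L}'$, has strictly smaller weight, and contains no traces and no special contractions, so choosing $\Psi^1$ as first and $\Psi^\sigma$ as second special factor produces $\mc{L}'$-acceptable forms $\wt T_a(\Psi'),\wt T_\ca(\Psi')$ whose free index lies on neither $\Psi^1$ nor $\Psi^\sigma$. Re-attaching the erased derivatives on $\psi^1$ and $\psi^\sigma$ and multiplying by $\Psi^k=\pa^{(a_k,\beta_k)}\psi^k$ reconstructs $L^{N,a_k,\beta_k,\tau}(\Psi)$ up to the correction terms $\wt T_a(\Psi')\pa_\ca\Psi^k$ and $\wt T_\ca(\Psi')\pa_a\Psi^k$, which carry one extra derivative on $\Psi^k$; taking it anti-holomorphic gives $B_k>\beta_k$. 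The one genuine departure from Lemma~\ref{lem.decr.A.Bad1'}, entirely parallel to the ``$+1$'' in the passage from Lemma~\ref{lem.decr.A.Bad1} to Lemma~\ref{lem.decr.A.Bad2}, is that, since we have already arranged $[\alpha_\sigma\|\tau\Specialks\|M,k\|L(\Psi)]=0$, the factor $\Psi^\sigma$ enters $\mc{L}'$ with its holomorphic restriction raised by one (i.e.\ $\alpha'_\sigma=\alpha_\sigma-\tau+1$ rather than $\alpha_\sigma-\tau$); this extra unit is precisely what forces all the reconstructed and correction terms to satisfy $A_\sigma>\alpha_\sigma$ as well, so that they are exactly the terms permitted in $L^{\beta_k+}(\Psi)$. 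Finally one checks routinely that $\wt T_a,\wt T_\ca$, hence $T_a,T_\ca$, are $\mc{L}'$- (resp.\ $\mc{L}$-) acceptable and meet the constraints \eqref{assnT}, and that the correction terms still contain no traces, no $\Special$-contractions, and exactly $\tau$ contractions of the form $\Specialks$.

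I expect the principal obstacle to be the combinatorial bookkeeping underlying the analogue of Claim~\ref{lem.new.int}: verifying that the singled-out piece of the local divergence formula arises \emph{solely} from $L^{N,a_k,\beta_k,\tau}(\Psi)$, which requires tracking the five parameters $M$, $N$, $a_k$, $\beta_k$, $\tau$ simultaneously together with the no-trace and no-$\Special$ hypotheses, and then keeping the successive restriction lists straight so that every intermediate tensor-valued form remains acceptable. As elsewhere in \S\ref{Pf.final.step}, this is routine in spirit but genuinely delicate; the tensor-valued steps are to be understood, following Remark~\ref{KEY}, as systems of scalar-valued statements.
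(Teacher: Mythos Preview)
Your strategy is essentially the paper's own: mirror the proof of Lemma~\ref{lem.decr.A.Bad1'} by swapping the roles of $\Psi^k$ and $\Psi^\sigma$ (holomorphic on $\Psi^\sigma$ corresponding to anti-holomorphic on $\Psi^k$), applying $\Local_\sigma$ in place of $\Local_k$, erasing $\Psi^k$ rather than $\Psi^\sigma$, and invoking the inductive hypothesis with $\Psi^1,\Psi^\sigma$ as the two special factors; you have also correctly identified the ``$+1$'' in $\alpha'_\sigma$ coming from the prior vanishing of $[\alpha_\sigma\|\tau\Specialks\|M,k\|L(\Psi)]$, which is exactly the mechanism the paper singles out in passing from Lemma~\ref{lem.decr.A.Bad1} to Lemma~\ref{lem.decr.A.Bad2}.

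One small imprecision worth flagging: the sublinear combination you isolate in $\Local_\sigma L(\Psi)$ cannot be described merely as ``$\Psi^k$ carries a maximal number of traces.'' In the proof of Lemma~\ref{lem.decr.A.Bad1'} the relevant piece was $[M\Special\|\Delta^N\psi^\sigma\|\Local_k L(\Psi)]$, a \emph{two-parameter} selection (specific trace count on $\Psi^\sigma$ together with $M$ contractions $\Special$). The mirror here requires the analogous pair of conditions: a prescribed number $\tau+N$ of traces on $\Psi^k$ of the correct holomorphic/anti-holomorphic split, \emph{and} exactly $(M-\tau)$ surviving contractions $\Specialonek$ (these are untouched by $\Local_\sigma$ since neither index lies on $\Psi^\sigma$). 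Only with both conditions does the Claim~\ref{last-claim} argument go through to show the piece arises solely from $L^{N,a_k,\beta_k,\tau}(\Psi)$. You clearly anticipate this in your final paragraph, but the body of the sketch understates what must be tracked.
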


The proof of these Lemmas follows closely the proofs of Lemmas \ref{lem.decr.A.Bad1}
and \ref{lem.decr.A.Bad2}. In fact, half the proof is so similar that we only highlight the spots where the argument is altered. For the remaining half we need a modification of the earlier argument, which we explain in detail. 

\medskip

Consider all terms in $[\alpha_\sigma\| \tau\Specialks\|M,k\|L(\Psi)]$. Let $b_\sigma$ be the minimum of $B_\sigma$, the number of anti-holomorphic indices in $\Psi^\sigma$. We let $[b_\sigma\|\alpha_\sigma\| \tau\Specialks\|M, k\|L(\Psi)]$ be the sublinear combination of terms with $B_\sigma=b_\sigma$. We will then prove that there exist an $\mc{L}$-acceptable form $T_a(\Psi)$, $T_\ca(\Psi)$ satisfying  \eqref{assnT} such that
\beq 
\label{delta.step'}
[b_\sigma\|\alpha_\sigma\| \tau\Specialks\|M,k\|L(\Psi)] =\pa_\ca T_a(\Psi)+\pa_a T_\ca(\Psi)+L^{(\alpha_\sigma,b_\sigma)+}(\Psi),
\eeq
where $L^{(\alpha_\sigma,b_\sigma)+}(\Psi)$  stands for a new linear combination of $\mc{L}$-acceptable  contractions which contain no traces, no contractions $\Special$, $\tau$  contractions $\Specialks$, and $(A_\sigma,B_\sigma)>(\alpha_\sigma,b_\sigma)$ in the lexicographical order, i.e.,
$A_\sigma>\alpha_\sigma$ or ($A_\sigma=\alpha_\sigma$ and $B_\sigma>b_\sigma$).  Clearly, if we can prove \eqref{delta.step'} then by iterative repetition, we can derive \eqref{eq.decr.A.Bad1'}.

We will prove \eqref{delta.step'} in steps: Let $N$ be the maximum number of contractions of the form $\Psi^k_a\Psi^\sigma_\ca$ among the terms in $ [b_\sigma\|\alpha_\sigma\|\tau\Specialks\|M,k\|L(\Psi)]$. Denote the sublinear combination with exactly $N$ such contractions by 
$$
L^{N,b_\sigma,\alpha_\sigma,\tau}(\Psi)=[N\Psi^k_a\Psi^\sigma_{\ca}\|
b_\sigma\|\alpha_\sigma\| \tau\Specialks\|M,k\|L(\Psi)].
$$
We will then show 
\beq 
\label{delta.step2'}
L^{N,b_\sigma,\alpha_\sigma,\tau}(\Psi) = \pa_\ca T_a(\Psi)+\pa_a T_\ca(\Psi)
+L^{(\alpha_\sigma,b_\sigma)+}(\Psi)
\eeq
with exactly the same notational conventions as for \eqref{delta.step'}. If we can show \eqref{delta.step2'} then we can derive \eqref{delta.step'} by iterative repetition. 

We will show \eqref{delta.step2'} by deriving a new integral equation. We erase the factor $\Psi^\sigma$, which has type $(\alpha_\sigma,b_\sigma)$, from $L^{N,b_\sigma,\alpha_\sigma,\tau}(\Psi)$ and make the indices that contracted against $\Psi^\sigma$ into free indices; we also erase the $(M-\tau)$ 
contractions of the form $\Psi^1_\ca\Psi^k_a$. We thus obtain a tensor of type $(b_\sigma,\alpha_\sigma)$, which we denote by
$$
[\erase\Psi^\sigma\|L^{N,b_\sigma,\alpha_\sigma,\tau}(\Psi)].
$$

\begin{claim}\label{lem.new.int'}
The following integral equation holds:
\beq
\label{new.int'}
\int 
[\erase\Psi^\sigma\|L^{N,b_\sigma,\alpha_\sigma,\tau}(\Psi)]=0.
\eeq
\end{claim}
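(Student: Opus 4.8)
The plan is to derive \eqref{new.int'} by the same mechanism as in the proof of Claim~\ref{lem.new.int}, with the factor $\Psi^1$ used there replaced here by $\Psi^k$, and the contractions between $\Psi^1$ and $\Psi^\sigma$ replaced by the three families occurring here: the $\tau$ contractions $\Specialks$, the $N$ contractions $\Psi^k_a\Psi^\sigma_\ca$, and the $M-\tau$ contractions $\Psi^1_\ca\Psi^k_a$. First I would apply the local divergence formula to $\int L(\Psi)=0$ with respect to the factor $\psi^k$, obtaining $\Local_kL(\Psi)=0$; since $n\ge\sigma-1$, this holds \emph{formally} by Theorem~\ref{second_main_thm}. (We must integrate by parts in $\psi^k$ rather than in $\psi^1$: by \eqref{assn3.6} there are no special contractions $\Special$, so $\Local_1$ would produce no traces in $\Psi^\sigma$ from the relevant contractions.)

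Next I would isolate the sublinear combination of $\Local_kL(\Psi)=0$ in which the factor $\Psi^\sigma$, now of type $(\alpha_\sigma+N,b_\sigma+\tau)$, carries exactly the $N+\tau$ traces obtained by forcing all the derivatives of $\psi^k$ coming off the contractions between $\Psi^k$ and $\Psi^\sigma$ onto $\Psi^\sigma$, while the $M-\tau$ derivatives of $\psi^k$ coming off the contractions $\Psi^1_\ca\Psi^k_a$ are forced onto $\Psi^1$. Since $\Local_kL(\Psi)=0$ formally and trace counts in a prescribed factor are invariant under the allowed formal operations, this sublinear combination vanishes separately. The crux is then to prove, exactly as in the derivation of \eqref{explanation}, that it arises \emph{exclusively} from $L^{N,b_\sigma,\alpha_\sigma,\tau}(\Psi)$ by precisely this recipe. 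Here one uses: that no term of $L(\Psi)$ contains a trace, so every trace in $\Psi^\sigma$ must come from a contraction with $\Psi^k$; that no term contains a special contraction $\Special$, so a derivative coming off a $\Psi^1_\ca\Psi^k_a$ contraction cannot be allowed to land on $\Psi^\sigma$; and a counting argument against the nested maximality built into the definition of $L^{N,b_\sigma,\alpha_\sigma,\tau}(\Psi)$, namely that $M=M\{L(\Psi)\}$, that $k=k\{L(\Psi)\}$, that $\tau$ is maximal among the retained terms, that $A_\sigma$ is minimal equal to $\alpha_\sigma$, that $B_\sigma$ is then minimal equal to $b_\sigma$, and that $N$ is maximal. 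This pins the source terms down to those satisfying $\Pi_l(k)=M$ and $\Pi''_l(k)=\tau$ and having exactly $N$ contractions $\Psi^k_a\Psi^\sigma_\ca$, and identifies the sublinear combination as $\Local_k$ applied to a sum obtained from $L^{N,b_\sigma,\alpha_\sigma,\tau}(\Psi)$ by transplanting these indices into $\Psi^1$ and $\Psi^\sigma$.

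Finally, using that this local equation holds formally, I would erase the factor $\Psi^\sigma$ together with its $N+\tau$ traces (keeping as free the indices it shared with $\Psi^2,\dots,\Psi^{\sigma-1}$ and with $\Psi^k$), erase the $M-\tau$ index-pairs joining $\Psi^1$ and $\Psi^k$, then integrate the resulting (tensor-valued) local equation over $\bC^n$ and undo the integration by parts in $\psi^k$, as in \S\ref{int-by-part-sec} and Remark~\ref{KEY}. The integrand is then exactly $[\erase\Psi^\sigma\|L^{N,b_\sigma,\alpha_\sigma,\tau}(\Psi)]$, which gives \eqref{new.int'}. The main obstacle is the identification in the middle step: one must rule out every other term of $L(\Psi)$, and every other way of distributing the derivatives of $\psi^k$ in $\Local_kL(\Psi)$, as a contributor to that sublinear combination. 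This is where all the hypotheses in \eqref{assn3.6} together with the five filtering parameters are genuinely needed; the bookkeeping is more intricate than the two-parameter version in the proof of Claim~\ref{lem.new.int}, but of the same nature and involving no new ideas.
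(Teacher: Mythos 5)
Your overall strategy---apply $\Local_k$, isolate a sublinear combination that vanishes separately, show it arises exclusively from $L^{N,b_\sigma,\alpha_\sigma,\tau}(\Psi)$, then erase and reverse the integrations by parts---is the same as the paper's. But the sublinear combination you propose to isolate is not the one the paper uses, and with your choice the crucial ``arises exclusively'' step fails. You route the derivatives coming off the $\tau$ contractions $\Specialks$ onto $\Psi^\sigma$ (producing traces there) and the derivatives coming off the $M-\tau$ contractions $\Specialonek$ onto $\Psi^1$ (producing traces there), so your filter is phrased entirely in terms of trace counts and the type of $\Psi^\sigma$. The difficulty is that a trace does not remember the chirality of the contraction that produced it: a trace in $\Psi^1$ can equally well arise from a contraction $\Psi^1_a\Psi^k_\ca$, and neither that contraction nor $\Psi^k_a\Psi^\sigma_\ca$ is counted by $\Pi_l(k)=\Pi'_l(k)+\Pi''_l(k)$. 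Hence a term $C^l(\Psi)$ with, say, $\Pi'_l(k)=0$ but with $M-\tau$ contractions of the form $\Psi^1_a\Psi^k_\ca$ (so $\Pi_l(k)=\tau<M$ and $C^l\notin[M,k\|L(\Psi)]$) can contribute to your sublinear combination, and the maximality of $M$---the only available tool for excluding such terms---cannot be invoked. Filtering additionally on the type of $\Psi^1$ does not repair this, since $(A_1,B_1)$ is not fixed across the terms of $L(\Psi)$.

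The paper's filter is designed precisely to avoid this: it takes $[M\Special\|\Delta^N\psi^\sigma\|\Local_kL(\Psi)]$, i.e.\ it demands $M$ special contractions $\Special$ in the resulting terms together with $\Psi^\sigma$ of type $(\alpha_\sigma+N+M-\tau,\,b_\sigma)$ carrying $N$ traces. A special contraction is chirality-locked: since $L(\Psi)$ contains no traces and no contractions $\Special$, a factor $\Special$ in $\Local_kL(\Psi)$ can only arise from a contraction $\Specialonek$ (with $\pa_a$ sent to $\Psi^\sigma$) or $\Specialks$ (with $\pa_\ca$ sent to $\Psi^1$)---exactly the contractions counted by $\Pi_l(k)$---so demanding $M$ of them immediately forces $\Pi_l(k)=M$. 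Moreover this asymmetric routing (holomorphic derivatives to $\Psi^\sigma$, anti-holomorphic ones to $\Psi^1$) makes the holomorphic index count of $\Psi^\sigma$ equal to $A_\sigma+N_l+(M-\tau_l)$, and comparison with $\alpha_\sigma+N+(M-\tau)$, using $A_\sigma\ge\alpha_\sigma$ and the maximality of $\tau$, yields $\tau_l=\tau$ and $A_\sigma=\alpha_\sigma$ simultaneously; your symmetric routing destroys this count as well. Replacing your filter by the paper's, the remainder of your outline does go through.
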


Before proving the claim, we will use \eqref{new.int'} to prove \eqref{delta.step2'}. Observe that the integral equation \eqref{new.int'} falls under the inductive assumption of Proposition \ref{main.prop} (see Remark \ref{KEY}), with a new restriction list  
$\mc{L}'=(\alpha_1,\beta'_1, \alpha_2,\beta_2,\dots,\alpha_{\sigma-1},\beta_{\sigma-1})$, $ \beta'_1=\beta_1-(M-\tau)$. 
By the construction, there are no traces in any of the terms in $[\erase\Psi^\sigma\|L^{N,b_\sigma,\alpha_\sigma,\tau}(\Psi)]$. Also the weight is now strictly lower than that of $L(\Psi)$, by construction. Furthermore in this case, we make $\Psi^1$ the special factor and $\Psi^k$ the second special factor. Observe that by construction there are no contractions of the form $\Psi^1_{\ol{s}}\Psi^k_s$.  So the requirement of Proposition \ref{main.prop} (ii) also holds.

Invoking the inductive assumption of Proposition \ref{main.prop}, we derive that   there exist $\mc{L}'$-acceptable forms $\wt T_a(\Psi'), \wt T_{\ca}(\Psi')$ of types $(b_\sigma+1,\alpha_\sigma)$ and $(b_\sigma,\alpha_\sigma+1)$, with the free indices $a,\ca$ {\it not} belonging to $\Psi^1$ nor to $\Psi^k$ so that: 
\beq 
\label{delta.setpn}
[\erase\Psi^\sigma\|L^{N,b_\sigma,\alpha_\sigma,\tau}(\Psi)]=
\pa_{\ca} \wt T_{a}(\Psi')+\pa_{a} \wt T_{\ca}(\Psi').
\eeq
Now, multiply the above by a factor $\Psi^\sigma=\pa^{(\alpha_\sigma,b_\sigma)}\psi^\sigma$, and contract all indices in $\Psi^\sigma$ against the last type $(b_\sigma, \alpha_\sigma)$ indices in the terms in \eqref{delta.setpn}.  We also add  the $(M-\tau)$ contractions of the form $\Psi^1_\cs\Psi^k_s$. This gives one forms
$$
T_a(\Psi)=\wt T_{a}(\Psi')\Psi^\sigma
\quad\text{ and }\quad
T_\ca(\Psi)=\wt T_{\ca}(\Psi')\Psi^\sigma.
$$
We thus derive:
\beq 
\label{delta.setp'n}
\begin{aligned}
L^{N,b_\sigma,\alpha_\sigma,\tau}(\Psi)=&
\pa_{\ca} T_{a}(\Psi)+\wt T_a(\Psi')\pa_{\ca}\Psi^\sigma\\
&+\pa_{a} T_{\ca}(\Psi)+\wt T_\ca(\Psi')\pa_{a}\Psi^\sigma.
\end{aligned}
\eeq
Now, we just observe that by construction, the terms in $\wt T_a(\Psi')\pa_{\ca}\Psi^\sigma$ and  $\wt T_\ca(\Psi')\pa_{a}\Psi^\sigma$ have an additional derivative on $\Psi^\sigma$; thus they are allowed in $L^{(\alpha_\sigma,b_\sigma)+}(\Psi)$ in \eqref{delta.step'}.

\begin{proof}[Proof of Claim \ref{lem.new.int'}]
Consider the local equation
\beq
\label{localk}
\Local_kL(\Psi)=0
\eeq
and consider the sublinear combination $[M \Special\|\Delta^N\psi^\sigma\|\Local_k L(\Psi)]$ of the left-hand side consisting of terms with  factor 
$\Psi^\sigma=\pa^{(\alpha_\sigma+N+M-\tau,b_\sigma)}\psi^\sigma$ and with $N$ traces in that factor, i.e., $\pa^{(\alpha_\sigma+M-\tau,b_\sigma-N)}\Delta^N\psi^\sigma$, \emph{and} with $M$ special contractions $\Special$. 

Clearly since \eqref{localk} holds formally, this sublinear combination vanishes separately (and formally again):
\beq
\label{loc-sharp'}
[M\Special \|\Delta^N\psi^\sigma\|\Local_k
L(\Psi)]=0.
\eeq
\begin{claim}
\label{last-claim}
The left-hand side of \eqref{loc-sharp'} arises exclusively from $L^{N,b_\sigma,\alpha_\sigma,\tau}(\Psi)$ after the integrations by parts $\Local_kL(\Psi)$ by the following procedures:
\begin{enumerate}
\item 
The $N$ contractions $\Psi^k_a\Psi^\sigma_\ca$  give $N$ traces $\Delta^N\Psi^\sigma$;

\item
The $\tau$ contractions  $\Specialks$ give $\tau$ contractions  $\Special$;

\item
The $(M-\tau)$ contractions  $\Specialonek$ give $(M-\tau)$ contractions  $\Special$;

\item
Any other indices in $\Psi^k$ are not allowed to hit $\Psi^\sigma$.
\end{enumerate}
\end{claim}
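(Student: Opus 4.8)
\textbf{Proof proposal for Claim \ref{last-claim}.}

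The plan is to follow the ``explanation'' step in the proof of Claim~\ref{lem.new.int} (and the discussion surrounding Claim~\ref{claim1}), now with $\Psi^k$ in the role of the factor that is integrated by parts and with \emph{two} quantities tracked simultaneously: the number of traces created in $\Psi^\sigma$ and the number of special contractions $\Special$ created. Fix a term $C^*(\Psi')$ appearing in $[M\Special\|\Delta^N\psi^\sigma\|\Local_kL(\Psi)]$ and let $C^l(\Psi)$ be the term of $L(\Psi)$ from which it arose when the derivatives of $\psi^k$ were integrated by parts. Since $C^l(\Psi)$ has neither traces nor contractions $\Special$ by \eqref{assn3.6}, every trace of $\Psi^\sigma$ and every $\Special$ in $C^*(\Psi')$ must be \emph{created} by a floating $\psi^k$-derivative that lands on $\Psi^\sigma$ (resp.\ on $\Psi^1$). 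A short case check --- using that the two indices of such a pair cannot both come from $\psi^k$, since that would force a trace in $\Psi^k$ in $C^l(\Psi)$, contradicting \eqref{assn3.6} --- shows that a trace in $\Psi^\sigma$ must come from a contraction $\Psi^k_a\Psi^\sigma_\ca$ or $\Psi^k_\ca\Psi^\sigma_a$ of $C^l(\Psi)$ whose $\psi^k$-index is forced onto $\Psi^\sigma$, while a $\Special$ must come either from a contraction $\Specialks$ whose $\psi^k$-index is forced onto $\Psi^1$ (this will be procedure~(2)) or from a contraction $\Specialonek$ whose $\psi^k$-index is forced onto $\Psi^\sigma$ (procedure~(3)).

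The heart of the argument is then a bookkeeping of the indices of $\Psi^\sigma$. The $M$ contractions $\Special$ of $C^*(\Psi')$ consume $M$ distinct contractions of $C^l(\Psi)$ counted by $\Pi'_k(l)$ or $\Pi''_k(l)$, so $\Pi_k(l)\ge M$; maximality of $M=M\{L(\Psi)\}$ forces $\Pi_k(l)=M$, hence $C^l(\Psi)\in[M,k\|L(\Psi)]$, and moreover \emph{every} $\Specialks$- and every $\Specialonek$-contraction of $C^l(\Psi)$ must have produced a $\Special$; in particular no $\Specialks$-contraction became a trace or migrated to a factor other than $\Psi^1$. Counting the holomorphic indices of $\Psi^\sigma$ --- it carries $\alpha_\sigma+M-\tau+N$ of them in $C^*(\Psi')$ against $A_\sigma\ge\alpha_\sigma$ in $C^l(\Psi)$, of which $\Pi'_k(l)=M-\Pi''_k(l)$ are added by procedure~(3), $N$ come with the $N$ traces, and $j\ge0$ are ``stray'' (attached to an index previously contracting $\Psi^k$ against some $\Psi^h$, $h\ne1,\sigma$) --- gives $A_\sigma+j=\alpha_\sigma+\Pi''_k(l)-\tau$. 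Since $\tau\ge2$ in the case at hand, $\Pi''_k(l)\ge\tau\ge1$, so $\tau_l=\Pi''_k(l)$; as $\tau=\max\{\tau_{l'}\}$ over the relevant index set we get $\Pi''_k(l)=\tau$, whence $A_\sigma=\alpha_\sigma$, $j=0$, and so $C^l(\Psi)\in[\alpha_\sigma\|\tau\Specialks\|M,k\|L(\Psi)]$ with the holomorphic half of procedure~(4) in force. The parallel count of the antiholomorphic indices of $\Psi^\sigma$, now using the minimality defining $b_\sigma$, gives $B_\sigma=b_\sigma$ and the antiholomorphic half of procedure~(4). Finally, all $N$ traces of $\Psi^\sigma$ came from $\Psi^k_a\Psi^\sigma_\ca$-contractions, so $C^l(\Psi)$ has at least $N$ of these, hence exactly $N$ by the maximality defining $N$ (procedure~(1)). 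Thus $C^l(\Psi)\in L^{N,b_\sigma,\alpha_\sigma,\tau}(\Psi)$ and yields $C^*(\Psi')$ precisely by procedures~(1)--(4). Once this is established, the left-hand side of \eqref{loc-sharp'} is recognized as $\Local_k$ applied to the sublinear combination obtained from $L^{N,b_\sigma,\alpha_\sigma,\tau}(\Psi)$ by the replacements in (1)--(3), so erasing the $N$ traces and the $M$ special contractions, integrating over $\bC^n$, and reversing the integrations by parts gives \eqref{new.int'}, exactly as in the proof of Claim~\ref{lem.new.int}.

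The main obstacle is keeping this chain of extremality arguments in the correct order: one first uses maximality of $M$ to land in $[M,k\|L(\Psi)]$, then the \emph{definition} of $\tau$ --- delicate precisely because $\tau_l=|\Pi''_k(l)-1|+1$ is not linear in $\Pi''_k(l)$, so one must first know $\Pi''_k(l)\ge1$ --- to land in $[\tau\Specialks\|M,k\|L(\Psi)]$, then the minimality conditions defining $\alpha_\sigma$ and $b_\sigma$, and only then the maximality defining $N$; each inclusion must already be in force before the corresponding extremal quantity may be invoked. Making this precise while keeping track, via the Leibniz rule, of exactly which $\psi^k$-index is allowed to hit which factor is the substantive part; everything else is the same formal manipulation of local and integral equations already used for Claims~\ref{lem.new.int} and \ref{lem.new.int'}.
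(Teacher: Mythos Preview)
Your proposal is correct and follows essentially the same route as the paper's proof: both arguments fix a term $C^*(\Psi')$ in the target sublinear combination, trace it back to a term $C^l(\Psi)$ in $L(\Psi)$, and then run a chain of extremality arguments (maximality of $M$, then of $\tau$, then minimality of $A_\sigma$ and $B_\sigma$, then maximality of $N$) driven by counting the holomorphic and antiholomorphic indices on $\Psi^\sigma$. You are in fact more explicit than the paper on one point: the paper writes ``the $(M-\tau_l)$ contractions $\Psi^1_{\cs}\Psi^k_s$'' without first verifying $\Pi''_k(l)\ge1$ (which is what makes $\tau_l=\Pi''_k(l)$), whereas you derive $\Pi''_k(l)\ge\tau\ge2$ from the holomorphic count and only then identify $\tau_l$ with $\Pi''_k(l)$; this is the right order, and your final paragraph correctly flags it as the delicate step.
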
 

We can check that Claim \ref{last-claim} implies Claim \ref{lem.new.int'} by repeating the argument after Claim   \ref{lem.new.int}:
Erase $\Psi^\sigma$, multiply by $\psi_k$, integrate over $\mathbb{C}^n$ and integrate by parts (reverse the procedure (4) above). 
So the rest of this proof is devoted to proving Claim \ref{last-claim}. 

Consider any term $C^*(\Psi')$ in $[M \Special\|\Delta^N\psi^\sigma\|\Local_k L(\Psi)]$, which has arisen from a contraction $C^l(\Psi)$ in $L(\Psi)$.  Since there were no special contractions $\Special$ in $C^l(\Psi)$, all such contractions in $C^*(\Psi')$ must have arisen from contractions in one of the two forms $\Specialonek$ or  $\Specialks$. Thus, since  $M$ is the maximum of $\Pi_k(l)$ for all $l$, it can only have arisen from terms with $\Pi_k(l)=M$, i.e., $C^l(\Psi)$ is a term in $[M,k\|L(\Psi)]$. Moreover all of the $M$ contractions in one of these two forms must give rise to a special contraction $\Special$. Observe that the holomorphic derivatives $\pa^{(\alpha_\sigma+M-\tau,b_\sigma-N)}\Delta^N\psi^\sigma$ in  $C^*(\Psi')$ include the ones that arose from the integration by parts of the  $(M-\tau_l)$ contractions $\Psi^1_\cs\Psi^k_s$ and the $N$ contractions  $\Psi^k_a\Psi^\sigma_\ca$. Hence the factor $\Psi^\sigma$ in $C^l(\Psi)$ has now been hit by at least  an additional $(M-\tau_l+N)$ holomorphic derivatives. By comparing the numbers of holomorphic indices in $\Psi^\sigma$ of $C^*(\Psi')$, we have
$$
\alpha_\sigma+M-\tau+N\ge A_\sigma+M-\tau_l+N.
$$
Thus combining with  $A_\sigma\ge\alpha_\sigma$, we get $\tau\le \tau_l$.   Since $\tau$ is the maximum of $\tau_l$, we conclude that $\tau=\tau_l$ and that $\Psi^\sigma$ in $C^l(\Psi)$ has exactly $\alpha_l$ holomorphic indices.  Therefore, $C^l(\Psi)$ is a term in
\beq\label{alphaPsi}
[\alpha_\sigma\|\tau\Specialks\|M,k\|L(\Psi)].
\eeq
We next consider the anti-holomorphic indices of $\Psi^\sigma$.  A term $C^l(\Psi)$ that contributes to $[M\Special\|\Delta^N\psi^\sigma\|\Local_k
L(\Psi)]$ must have  $B_\sigma\le b_\sigma$ by construction. But since $b_\sigma$ is the minimum value of $B_\sigma$ among terms in \eqref{alphaPsi}, we derive that $C^l(\Psi)$ must have belonged to   
$$
[b_\sigma\|\alpha_\sigma\|\tau\Specialks\|M,k\|L(\Psi)].
$$ 

Next, by hypothesis there exist no traces in  $C^l(\Psi)$; thus all of the $N$ traces in $\Psi^\sigma$ of $C^*(\Psi')$ must have arisen from contractions  $\Psi^k_a\Psi^\sigma_\ca$ and $\Specialks$. Furthermore, since the contractions of the form $\Specialks$ must all give rise to contractions $\Special$, all $N$ traces must have arisen from contractions of the form $\Psi^k_a\Psi^\sigma_\ca$. Thus $C^l(\Psi)$ belongs to 
\beq\label{NLPsi}
[N\Psi^k_a\Psi^\sigma_\ca\|b_\sigma\|
\tau\Specialks  \|M,k\|L(\Psi)],
\eeq
as claimed.

The above arguments show that the sublinear combination \eqref{NLPsi} contributes to $[M\Special\|\Delta^N\psi^\sigma\|\Local_k L(\Psi)]$ precisely as outlined in Claim \ref{last-claim}. This proves Claim \ref{last-claim}, and completes the proof of our Proposition.  
\end{proof}

\section{Proof of  Proposition  \ref{mainprop} (b): the case of order zero}
\label{chern_poly}

Recall from \S\ref{chern-poly} that the Chern polynomials are defined as invariant polynomials of the curvature form
$
\mathsf{R}_{a\cb}=R_{a\cb c\ce}dz^c\wedge dz^\ce.
$
We consider the variation of Chern polynomial under the perturbation of K\"ahler form $\omega_\epsilon=\omega_0+\epsilon\sqrt{-1}\pa\ol{\pa}\phi$ for a function $\phi\in C^\infty_0(\bC^n)$, where $\omega_0$ is the standard K\"ahler form.  The first variation of $\mathsf{R}_{a\cb}$ at $\epsilon=0$ is given by
$$
\mathsf{F}_{a\cb}=\phi_{ac\cb\ce}dz^c\wedge dz^\ce,
$$
where the indices on $\phi$ denote partial derivatives. Hence the $k$-th variation of the Chern character $\mathit{ch}_k(g_\epsilon)$ at $\epsilon=0$ is given by 
$$
\mathit{ch}_k(\phi)=\mathsf{F}_{a_1\ca_2}\wedge \mathsf{F}_{a_2\ca_3}\wedge\dots
\wedge \mathsf{F}_{a_k\ca_1},
$$
which we call the {\em Chern character of $\phi$ of order} $k$. This is exact since
$$
\Tch_k(\phi):=\phi_{a_1\ca_2\ce}dz^\ce\wedge \mathsf{F}_{a_2\ca_3}\wedge\dots
\wedge \mathsf{F}_{a_k\ca_1}
$$
gives
$$
d\big(\Tch_k(\phi)\big)=\ch_k(\phi).
$$
For any $k\in\bN$, consider any partition $k=k_1+k_2+\dots+k_m$ and define a $(k,k)$-from 
$$
\ch_{k_1,k_2,\dots,k_m}(\phi)=
\ch_{k_1}(\phi)\wedge\cdots\wedge
\ch_{k_m}(\phi).
$$
We  define a {\em $k$-Chern polynomial} by the complete contraction
$$
I_{k_1,k_2,\dots,k_m}(\phi)= \frac{1}{k!}\contr(
\ch_{k_1,k_2,\dots,k_m}(\phi)).
$$
Here we define the  contraction of $(k,k)$-from $T_{a_1\cb_1a_2\cb_2\dots a_k\cb_k}$
by $T_{a_1\ca_1a_2\ca_2\dots a_k\ca_k}$.
For example, 
$
I_{1}(\phi)=\Delta^2\phi,
$
$$
\begin{aligned}
I_{1,1}(\phi)&=
\Delta^2\phi\cdot\Delta^2\phi-\Delta \phi_{a\cb}\cdot
\Delta \phi_{b\ca},\\
I_{2}(\phi)&=\Delta \phi_{a\cb}\cdot
\Delta \phi_{b\ca}-
\phi_{a\cb c\cd}\cdot
\phi_{b\ca d\cc}.
\end{aligned}
$$
Since  $ch_k(\phi)$ is a compactly supported exact form, so is the product $\ch_{k_1,k_2,\dots,k_m}(\phi)$; hence we have
$$
\int I_{k_1,k_2,\dots,k_m}(\phi)=0
$$
with respect to the standard volume form on $\bC^n$.

\begin{definition}
For a factor in a complete contraction, its {\em height} is defined to be the number of  traces. The {\em height} of a complete contraction is defined to be  the sum of the height of all factors.  For a linear combination of complete contractions, the {\em maximum height}  is defined to be the maximum of the height of its complete contractions.
\end{definition}

\begin{lemma}
For each $k\ge2$,  the term in $I_k(\phi)$ with maximum height is 
$$
I^\text{\rm max}_k(\phi)
=
\Delta \phi_{a_1\ca_2}\Delta \phi_{a_2\ca_3}\cdots
\Delta \phi_{a_k\ca_1}.
$$
When $k=1$, the right-hand side is read as $\Delta \phi_{a\ca}=\Delta^2 \phi$.
\end{lemma}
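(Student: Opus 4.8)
The plan is to unwind the definition of $I_k(\phi)=\frac{1}{k!}\contr(\ch_k(\phi))$ and read off its terms directly. Recall that
$$\ch_k(\phi)=\mathsf{F}_{a_1\ca_2}\wedge\mathsf{F}_{a_2\ca_3}\wedge\cdots\wedge\mathsf{F}_{a_k\ca_1},\qquad \mathsf{F}_{a\cb}=\phi_{ac\cb\ce}\,dz^c\wedge dz^\ce,$$
so each of the $k$ factors is a fourth derivative $\phi^{(j)}_{a_jc_j\ca_{j+1}\ce_j}$ carrying two ``matrix'' indices $a_j$ (holomorphic) and $\ca_{j+1}$ (anti-holomorphic) and two ``form'' indices $c_j$ (holomorphic) and $\ce_j$ (anti-holomorphic), with indices read cyclically mod $k$. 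In $\ch_k(\phi)$ the matrix indices are already completely contracted in the cyclic pattern of the matrix trace: the index $a_j$ occurs in $\mathsf{F}_{a_j\ca_{j+1}}$ and, as $\ca_j$, in $\mathsf{F}_{a_{j-1}\ca_j}$, so it joins the factors $\phi^{(j)}$ and $\phi^{(j-1)}$. The operator $\contr$ then acts only on the remaining $2k$ form indices; it pairs each holomorphic with an anti-holomorphic index, and $\ch_k(\phi)$ is antisymmetric separately in its $k$ holomorphic and its $k$ anti-holomorphic form slots. Unwinding the wedge product and $\contr$ (the case $k=2$ being the displayed formula for $I_2$), $I_k(\phi)$ becomes a linear combination of complete contractions $C_\pi(\phi)$, indexed by $\pi\in S_k$, where $C_\pi(\phi)$ is the complete contraction of $\phi^{(1)}\otimes\cdots\otimes\phi^{(k)}$ in which the matrix indices are contracted cyclically as above and the holomorphic form index $c_j$ of $\phi^{(j)}$ is contracted against the anti-holomorphic form index $\ce_{\pi(j)}$ of $\phi^{(\pi(j))}$; the term $C_{\mathrm{id}}(\phi)$ appears with coefficient $1$ (this is the point of the normalizing factor $1/k!$ and is visible in the displayed cases $I_1,I_2,I_{1,1}$).

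Granting this, the lemma is essentially immediate. The key observation is that for $k\ge2$ no matrix index is ever contracted within its own factor, since $a_j$ joins $\phi^{(j)}$ to $\phi^{(j-1)}$ and $j-1\not\equiv j \pmod k$. Hence a factor $\phi^{(j)}$ of $C_\pi(\phi)$ carries a trace if and only if its two form indices $c_j,\ce_j$ are contracted against each other, i.e.\ if and only if $\pi(j)=j$; in that case, since partial derivatives commute, $\phi^{(j)}$ contributes exactly $\Delta\phi_{a_j\ca_{j+1}}$ and exactly one unit of height. Therefore
$$\mathrm{height}\big(C_\pi(\phi)\big)=|\{\,j:\pi(j)=j\,\}|\le k,$$
with equality precisely when $\pi=\mathrm{id}$, in which case every factor is a trace and $C_{\mathrm{id}}(\phi)=\Delta\phi_{a_1\ca_2}\Delta\phi_{a_2\ca_3}\cdots\Delta\phi_{a_k\ca_1}=I^{\rm max}_k(\phi)$.

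To conclude I would rule out cancellation. The number of traces carried by each individual factor is invariant under the formal operations of permuting factors and permuting indices inside a factor, so $C_{\mathrm{id}}(\phi)$, all $k$ of whose factors carry a trace, is not formally equal to any $C_\pi(\phi)$ with $\pi\ne\mathrm{id}$, for which only $|\mathrm{Fix}(\pi)|<k$ of the factors carry traces. Hence the coefficient of $C_{\mathrm{id}}(\phi)$ in $I_k(\phi)$ is exactly $1$ and no other term of $I_k(\phi)$ has height $k$, which is the assertion of the lemma. The step needing the most care is the first one: verifying that the antisymmetrization built into the wedge product of the $\mathsf{F}$'s, together with the contraction operator, produces exactly the contractions $C_\pi(\phi)$ with $C_{\mathrm{id}}(\phi)$ carrying coefficient $1$ — but this is a routine multilinear-algebra computation of the kind already carried out in the statement for $k=2$.
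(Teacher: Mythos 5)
Your proof is correct and follows essentially the same route as the paper's: unwind the wedge/contraction definition of $I_k$ into a signed sum of contractions indexed by permutations of the form indices, observe that maximum height occurs exactly when every form-index pair is contracted within its own factor (the identity permutation), and check that this term carries coefficient $1$. Your version is in fact slightly more careful than the paper's, since you explicitly note that for $k\ge2$ the cyclically contracted "matrix" indices never produce a trace and that no cancellation can remove the unique height-$k$ term.
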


\begin{proof}
Since the case $k=1$ is trivial, we let $k\ge2$. Recall that $I_k$ is computed as follows: skew symmetrize the tensor
$$
\phi_{a_1\ca_2 b_1\cc_1}\phi_{a_2\ca_3 b_2\cc_2}\cdots
\phi_{a_k\ca_1 b_k\cc_k} 
$$
over $b_1,\dots,b_k$ and $\cc_1,\dots,\cc_n$  and contract against $g^{b_1\cc_1}\cdots g^{b_n\cc_n}$\!.  The term of maximum height arises only when each pair $b_j\cc_j$ is contained in a factor. In that case the contraction gives
$
\Delta \phi_{a_1\ca_2}\cdots
\Delta \phi_{a_k\ca_1}.
$
Such terms arise $k!$ times, we obtain the claim.
\end{proof}

For the general case, the the terms of maximum height is given by
$$
I^\text{max}_{k_1,k_2,\dots,k_m}(\phi)=
I^\text{max}_{k_1}(\phi)I^\text{max}_{k_2}(\phi)\cdots
I^\text{max}_{k_m}(\phi).
$$
\begin{lemma}\label{lemma:internal.contr22}
Let $L(\phi)$ be a linear combination of terms of degree $\sigma$ of the form
$$
\contr(\pa^{(2,2)}\phi\otimes\cdots \otimes\pa^{(2,2)}\phi).
$$
Then there exists a linear combination of  $\sigma$-Chern polynomials $\Ch(\phi)$ such that
$$
L(\phi)=\Ch(\phi)+L^\sharp(\phi),
$$
where each term in $L^\sharp(\phi)$ has at least one factor without a trace.
\end{lemma}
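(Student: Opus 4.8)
The plan is to isolate the \emph{maximum-height part} of $L(\phi)$ --- the sublinear combination $L^{\max}(\phi)$ consisting of those terms in which \emph{every} factor $\pa^{(2,2)}\phi$ carries at least one trace --- and to show that this part alone is (formally) the maximum-height part of a suitable linear combination of $\sigma$-Chern polynomials. First I would describe $L^{\max}(\phi)$ explicitly: a factor $\pa^{(2,2)}\phi$ with a trace is either $\Delta^2\phi$ (two traces) or $\Delta\phi_{a\cb}$ (one trace, leaving one holomorphic and one antiholomorphic index). Writing $M_{a\cb}:=\Delta\phi_{a\cb}$, the scalar factors $\Delta^2\phi=\tr M$ are isolated, while the $M_{a\cb}$-factors contract only among themselves, so any all-trace term decomposes uniquely into a product of cyclic traces. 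Hence every all-trace term equals one of the formal contractions
\[
T_\mu(\phi):=\tr(M^{\mu_1})\cdots\tr(M^{\mu_\ell}),\qquad \text{a part }\mu_i=1\text{ giving }\Delta^2\phi,
\]
indexed by the partitions $\mu=(\mu_1\ge\cdots\ge\mu_\ell)$ of $\sigma$; by the first main theorem these are linearly independent (they are the power-sum monomials in the eigenvalues of $M$). Therefore $L^{\max}(\phi)=\sum_{\mu\vdash\sigma}c_\mu T_\mu(\phi)$ for some constants $c_\mu$.

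Next I would compute, for each partition $\mu$, the maximum-height part of the Chern polynomial $I_\mu(\phi):=I_{\mu_1,\dots,\mu_\ell}(\phi)=\tfrac1{\sigma!}\contr(\ch_{\mu_1}(\phi)\wedge\cdots\wedge\ch_{\mu_\ell}(\phi))$. In such a contraction a term has all factors traced only when (i) each block $\ch_{\mu_i}$ with $\mu_i\ge2$ is fully internally contracted --- which produces $\tr(M^{\mu_i})$ up to sign, exactly as in the preceding Lemma --- and (ii) the two form-indices of each block $\ch_1=\sfF_{a\ca}$ are then forced to pair against the form-indices of the \emph{other} $\ch_1$-blocks, via some permutation $\rho$ of the $r:=\#\{i:\mu_i=1\}$ such blocks, contributing $\sum_\rho\operatorname{sgn}(\rho)\prod_{\text{cycles of }\rho}\tr(M^{(\cdot)})$. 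The term $\rho=\mathrm{id}$ contributes $(\Delta^2\phi)^r$ with a nonzero combinatorial coefficient and, combined with the $\tr(M^{\mu_i})$, reproduces $T_\mu$; every other $\rho$ has strictly fewer cycles, hence contributes monomials $T_\nu$ with strictly fewer trace-factors, $\ell(\nu)<\ell(\mu)$ (classically $\sum_\rho\operatorname{sgn}(\rho)\prod_{\text{cycles}}p_{(\cdot)}=r!\,e_r$). Thus the maximum-height part of $I_\mu(\phi)$ equals $\lambda_\mu T_\mu(\phi)+\sum_{\ell(\nu)<\ell(\mu)}(\ast)\,T_\nu(\phi)$ with $\lambda_\mu\ne0$.

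To conclude, order the partitions of $\sigma$ by decreasing number of parts; the previous display shows that the maximum-height parts of the $I_\mu(\phi)$, $\mu\vdash\sigma$, are expressed in terms of the basis $\{T_\mu(\phi)\}$ by a triangular matrix with nonzero diagonal entries $\lambda_\mu$, hence an invertible matrix. Consequently there are constants $d_\mu$ with $L^{\max}(\phi)=\sum_{\mu\vdash\sigma}d_\mu\cdot(\text{maximum-height part of }I_\mu(\phi))$, i.e.\ $L^{\max}(\phi)$ is the maximum-height part of $\Ch(\phi):=\sum_{\mu\vdash\sigma}d_\mu I_\mu(\phi)$, which is a linear combination of $\sigma$-Chern polynomials. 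Setting $L^\sharp(\phi):=L(\phi)-\Ch(\phi)$, the maximum-height parts cancel, so every term of $L^\sharp(\phi)$ has at least one factor without a trace --- which is the assertion.

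I expect the main obstacle to be the middle step: determining precisely which pairings of the $\sigma$ holomorphic form-indices against the $\sigma$ antiholomorphic form-indices in $\contr(\ch_{\mu_1}\wedge\cdots\wedge\ch_{\mu_\ell})$ leave every factor traced, and checking that the resulting sum over permutations of the $\ch_1$-blocks has the stated shape (a nonzero coefficient on $T_\mu$, all other contributions strictly coarser). This is a finite combinatorial computation --- essentially the expansion of $e_r$ in power sums, together with sign bookkeeping from the wedge products --- but it must be carried out carefully, and it is the only place where more than formal manipulation is needed. Note that the integral hypothesis on $L(\phi)$ is not used here: it enters only afterwards, when one passes from this lemma to Proposition \ref{mainprop}(b).
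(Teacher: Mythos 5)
Your proposal is correct and follows essentially the same route as the paper: the paper likewise identifies the all-traced terms as products of cyclic traces $\Delta\phi_{a_1\ca_2}\cdots\Delta\phi_{a_m\ca_1}$ (via a directed-graph/cycle decomposition, i.e.\ your $T_\mu$'s), observes that $T_\mu$ is exactly the maximum-height term of $I_{\mu_1,\dots,\mu_\ell}$ with all other contributions of strictly lower height, and peels these off iteratively. The only cosmetic difference is that the paper runs a greedy induction on height where you invert the triangular matrix (ordered by number of parts) in one step; both orderings are compatible and the underlying combinatorics is identical.
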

\begin{proof}
Pick out the terms in $L(\psi)$ with  traces in each factor.  Among those, choose a term with a maximum height.  Such a term $C(\phi)$ can be written as a complete contraction of 
$$
\Delta\phi_{a\cb}\cdots\Delta\phi_{c\cd}.
$$
For such a term, we associate a directed graph as follows: Each factor defines a node and each contraction $\Delta\phi_{a\cb}\Delta\phi_{b\cc}$ defines an arc 
from the node $\Delta\phi_{a\cb}$ to the node $\Delta\phi_{b\cc}$. In particular, $\Delta\phi_{a\ca}=\Delta^2\phi$ defines a loop at the node $\Delta\phi_{a\ca}$.
Note that from each node there is exactly one outgoing arc and exactly one incoming arc.  Such a directed graph can be decomposed into simple cycles (i.e., a directed polygon), which corresponds to a complete contraction of the form
\beq\label{cycle}
\Delta\phi_{a_1\ca_2}\Delta\phi_{a_2\ca_3}\cdots
\Delta\phi_{a_m\ca_1}.
\eeq
When $m=1$, it is read as $\Delta\phi_{a_1\ca_1}=\Delta^2\phi$. Since the complete contraction \eqref{cycle} is a multiple of $I^\text{max}_m(\phi)$, we see that $C(\phi)$ is a multiple of $I^\text{max}_{k_1,k_2,\dots,k_m}(\phi)$ for a choice of $k_1,\dots,k_m$.

Thus subtracting $c\,I_{k_1,k_2,\dots,k_m}(\phi)$ from $L(\phi)$, we can remove the term without affecting other terms with the  maximum height (if they exists). By repeating this process, we derive  our claim. 
\end{proof}

\begin{lemma}\label{lemma:type22}
Let $L(\phi)$ be as in Lemma \ref{lemma:internal.contr22}.  Assume that each term in $L(\phi)$ has at least one factor of height $0$ (i.e., a factor without a trace).
If $L(\phi)$  integrates to zero in a dimension $n\ge\sigma-1$, then $L(\phi)=0$.
\end{lemma}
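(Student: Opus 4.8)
The plan is to translate the statement into a purely algebraic non-degeneracy property of the ``symbols'' of such contractions, to extract that property from the integral hypothesis by testing against plane waves, and then to establish it by induction on $\sigma$.

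\emph{Step 1: passage to symbols.} Polarize $L(\phi)$ to the multilinear invariant $L(\Psi)=L(u_1,\dots,u_\sigma)$, linear in each $u_j\in S^{2,2}$; the property that every term contains a traceless factor is inherited, and $\int L(\Psi)\,dV=0$ in dimension $n$ since $\int L(\sum_j\lambda_j\psi^j)\,dV=0$ identically in $\lambda$. As recalled in \S\ref{section-SMT}, $\pa^{(2,2)}\psi(z_0)$ ranges over all of $S^{2,2}$, and $S^{2,2}$ is spanned by the ``squares'' $v^{\otimes2}\otimes\ol v^{\,\otimes2}$; hence, by multilinearity, $L$ vanishes formally iff its symbol
\[
P(\zeta_1,\dots,\zeta_\sigma):=L\big(\zeta_1^{\otimes2}\otimes\ol{\zeta_1}^{\,\otimes2},\ \dots,\ \zeta_\sigma^{\otimes2}\otimes\ol{\zeta_\sigma}^{\,\otimes2}\big),\qquad \zeta_j\in\bC^N,
\]
vanishes identically for one (equivalently every) $N\ge\sigma$ — the latter implication using Theorem \ref{second_main_thm}. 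Substituting these squares into the $l$-th contraction pattern replaces every holomorphic, resp.\ antiholomorphic, index on the $j$-th factor by $\zeta_j$, resp.\ $\ol{\zeta_j}$, so $P=\sum_l a_l\prod_{(j\to k)\in G_l}\langle\zeta_j,\zeta_k\rangle$, where $G_l$ is the directed $2$-regular multigraph on $\{1,\dots,\sigma\}$ recording the contractions of the $l$-th term (a loop being a trace), $\langle\cdot,\cdot\rangle$ the Hermitian pairing, and the hypothesis says that every $G_l$ has a vertex carrying no loop. Thus it suffices to prove $P\equiv0$.

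\emph{Step 2: the integral forces vanishing on the momentum surface.} I claim $P$ vanishes on $\{\zeta_1+\cdots+\zeta_\sigma=0\}\subset(\bC^n)^\sigma$. Pick real frequencies $\xi_1,\dots,\xi_{\sigma-1}$ in general position (possible since $n\ge\sigma-1$) and set $\xi_\sigma:=\xi_1+\cdots+\xi_{\sigma-1}$; for this choice the only solutions of $\sum_j\epsilon_j\xi_j=0$ with $\epsilon_j\in\{-1,1\}$ are $\epsilon=(1,\dots,1,-1)$ and its negative. Feed the polarized identity $\int L(\Psi)\,dV=0$ the test functions $\psi^j=\chi(\,\cdot\,/R)\,\Re(c_je^{i\langle\xi_j,\cdot\rangle})$ with $\chi\ge0$ a fixed bump: the $j$-th factor carries only the frequencies $\pm\xi_j$, so only these two resonances survive, and as $R\to\infty$ the leading term of $R^{-2n}\int L(\Psi)\,dV$ is a nonzero constant times the value of $P$ at the configuration $(\zeta_1,\dots,\zeta_{\sigma-1},-\zeta_1-\cdots-\zeta_{\sigma-1})$ determined by these frequencies (the two resonances give the same value, $P$ having even total degree). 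Hence $P$ vanishes at all such configurations, which are dense in $\{\zeta_1+\cdots+\zeta_\sigma=0\}$, so $P$ vanishes there identically.

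\emph{Step 3: algebraic non-degeneracy — the crux.} It remains to show that a symbol $P=\sum_la_l\prod_{(j\to k)}\langle\zeta_j,\zeta_k\rangle$, all of whose multigraphs $G_l$ have a loopless vertex, which vanishes on $\{\sum_j\zeta_j=0\}$ in $(\bC^n)^\sigma$ with $n\ge\sigma-1$, must vanish identically. This is exactly where the traceless-factor hypothesis is indispensable: it fails without it, the Chern polynomials of \S\ref{chern-poly} being the obstruction, and by Lemma \ref{lemma:internal.contr22} one is reduced to precisely this class. I would argue by induction on $\sigma$. The cases $\sigma\le2$ are direct: for $\sigma=1$ no term can have a loopless factor, so $L=0$; for $\sigma=2$ the only $2$-regular multigraph on two vertices with a loopless vertex has all four arcs between the two vertices, so $P=a\,|\langle\zeta_1,\zeta_2\rangle|^{4}$ and $P|_{\zeta_2=-\zeta_1}=a\,\|\zeta_1\|^{8}$ forces $a=0$. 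For the inductive step I would fix a loopless vertex $v$ occurring in some term and degenerate along the momentum surface, sending the $\zeta_j$ incident to $v$ to scale $\epsilon\to0$; the leading coefficient in $\epsilon$ is the symbol of a lower-degree invariant obtained by ``collapsing'' $v$ — contracting its two in-slots and two out-slots against the neighbouring factors and against the fixed vectors produced in the limit. One then checks this collapsed symbol satisfies a momentum identity of the same type and, after removing if necessary lower-degree Chern polynomials via Lemma \ref{lemma:internal.contr22}, again has only loopless-vertex multigraphs, so the inductive hypothesis applies; since the bound $n\ge\sigma-1$ degrades correctly to $n\ge(\sigma-1)-1$ for the smaller problem, the induction closes. \textbf{The main obstacle} is precisely this step: arranging the degeneration so that it yields a genuine lower-$\sigma$ instance of the statement, choosing which vertex to collapse, and verifying that the loopless-vertex property is inherited (or restored) at each stage. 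The combinatorics of the $2$-regular multigraphs, entangled with the holomorphic/antiholomorphic splitting of the indices, is the genuinely delicate part, consistent with the extra difficulty of the K\"ahler setting noted in the introduction.
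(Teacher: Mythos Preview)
Your route is genuinely different from the paper's --- symbols, plane waves, and a momentum-surface constraint rather than the local divergence formula --- and Steps~1 and~2 are essentially sound (the plane-wave/cutoff argument in Step~2 needs the usual care, but the conclusion $P|_{\{\sum\zeta_j=0\}}=0$ is correct, and the two resonances do coincide because $P$ has bidegree $(2,2)$ in each $\zeta_j$, hence is even in each variable separately). The gap is Step~3, as you yourself flag. The inductive ``collapsing'' is not well-defined as written: sending ``the $\zeta_j$ incident to $v$'' to scale $\epsilon$ is ambiguous, and under any natural reading the $\epsilon$-leading coefficient is not obviously the symbol of a degree-$(\sigma-1)$ invariant satisfying a momentum identity of the same shape. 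More seriously, the loopless-vertex hypothesis is not stable under such a move: contracting the four slots of the removed factor into its neighbours generically \emph{creates} traces there, and the resulting lower-degree symbol may well be a Chern polynomial --- which vanishes on the momentum surface without being zero. Your appeal to Lemma~\ref{lemma:internal.contr22} to strip those off does not reduce $\sigma$, so it does not close the induction; you would need a separate mechanism guaranteeing that \emph{after} subtracting Chern polynomials at each stage something nontrivial survives, and you have not supplied one.

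The paper's argument is short and avoids all of this. After polarizing, it applies $\Local_1L(\Psi)=0$ and isolates the sublinear combination of height exactly $M+4$, where $M$ is the maximal height already present in $L(\Psi)$. Because every factor in $L(\Psi)$ has type $(2,2)$, each derivative migrated off $\Psi^1$ raises the height of the factor it hits by exactly one; hence height $M+4$ is achieved only from terms in which $\psi^1$ occupies a height-$0$ slot and each of its four indices is forced onto the factor carrying the conjugate index, creating a new trace there. The fixed type $(2,2)$ of the original factors then lets one read off, from each factor of a term in $[M+4\|\Local_1L(\Psi)]$, exactly which extra indices came from $\psi^1$, so this correspondence is a formal bijection. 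Since $[M+4\|\Local_1L(\Psi)]=0$ holds formally, the source sublinear combination $[\psi^1\|M\|L(\Psi)]$ vanishes, and symmetrizing over which factor plays the role of $\psi^1$ gives $L(\Psi)=0$. No induction on $\sigma$, no momentum surface, no graph surgery.
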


Observe that Proposition  \ref{mainprop} (b) follows  from the two lemmas above.

\begin{proof}
We polarize $L(\phi)$ and define $L(\Psi)=L(\psi^1,\dots,\psi^\sigma)$. Then $\int L(\Psi)=0$ holds. Assuming $L(\Psi)\ne0$ we will derive a contradiction.

Let $M$ be the maximum height of $L(\phi)$.  Picking out the terms of height $M$, we define the sublinear combination $[M\|L(\Psi)]$.  Now, for each $k\ge1$, we pick out the term of $[M\|L(\Psi)]$ with $k$ factors of height $0$; the sublinear combination is denoted by $ [k\|M\| L(\Psi)]$. Then, since each term has a factor of height $0$, we have
$$
[M\|L(\Psi)]=\sum_{k=1}^K [k\| M\|L(\Psi)]
$$
with $[K\|M\| L(\Psi)]\ne0$. Let $[\psi^1\|K\| M\| L(\Psi)]$ be the sublinear combination of  $[K\|M\| L(\Psi)]$ consisting of the terms for which $\psi^1$ has height $0$. Then we can recover $[K\|M\|L(\Psi)]$ from $[\psi^1\|K\|M\|L(\Psi)]$  by symmetrizing over $\psi^1,\dots,\psi^\sigma$. Thus we should have 
$
[\psi^1\|K\|M\| L(\Psi)]\ne0
$
and hence
$$
[\psi^1\|M\|L(\Psi)]:=\sum_{k=1}^K [\psi^1\|k\| M\|L(\Psi)]\ne0.
$$
Now we consider the local equation
\beq\label{IBPIpsi}
\Local_1L(\Psi)=0,
\eeq
which holds formally by the assumption $n\ge\sigma-1$. From $\Local_1L(\Psi)$ pick out the sublinear combination $[M+4\|\Local_1L(\Psi)]$ consisting of the terms of height $M+4$.

By the definition of $M$ and since all factors in all terms in $L(\Psi)$ have exactly four derivatives, each term in $[M+4\|\Local_1L(\Psi)]$ arises  from a term in 
$[\psi^1\|M\| L(\Psi)]$ by forcing each derivative in $\Psi^1_{ab\cc\cd}$ to hit the factor with the conjugate index (thus creating a trace in the latter). We then observe that we can formally  \emph{re-construct} $[\psi^1\|M\| L(\Psi)]$ from $[M+4\| \Local_1 L(\Psi)]$: In any term in the latter, by inspection of any  factor with more than four indices we can see the \emph{type} of index in $\Psi_1$ that gave rise to such a factor. Specifically, since all factors in any term in $L(\Psi)$ have type $(2,2)$, a factor $T^j$ in some term in $[M+4\| \Local_1 L(\Psi)]$,
$$
T^j:=\pa^{(p,q)}\Delta^r\psi^j
$$
can only have arisen  by applying derivatives of type $(r+p-2,r+q-2)$  to the factor 
$$
S^j:=\pa^{(2-s,2-s)}\Delta^s\psi^j,\quad
s=4-r-p-q,
$$
in a term in $[\psi^1\|M\| L(\Psi)]$. Each such application of a derivative creates a trace $\Delta$. Since the correspondence $T^j\mapsto S^j$ is one to one,
the equation
$$
[M+4\|\Local_1L(\Psi)]=0,
$$
which follows from \eqref{IBPIpsi} and holds \emph{formally}, forces $[\psi^1\|M\| L(\Psi)]=0$. This is a contradiction.
\end{proof}

\section{Appendix}
\subsection{Calculation of the Bergman kernel of line bundles}
\label{line-bundle-sec} 
We give an algebraic procedure for computing the asymptotic expansion of the Bergman kernel for the positive line bundles.  Now there are  several effective procedures for computing the expansion.  Here is a part of the list:
\begin{enumerate}
\item  Tian's peak section method: \cite{Ti}, \cite{Lu}, \cite{LT}.

\item  Use the asymptotic expansion of a Laplace integral: \cite{En}, \cite{Lo}, \cite{Xu}.

\item Use the Szeg\"o kernel of the tube in a line bundle:
\cite{Ze}, \cite{Ca}, \cite{BBS}.

\end{enumerate}
\noindent
Our method is in the line of (3).  The main tool is the infinite order microdifferential operators that was introduced by Boutet de Monvel \cite{BdM}; some explicit calculations of the Bergman and Szeg\"o kernels were done in \cite{HKN1} and \cite{HKN2}.  However,  these papers contain technical details and it is not easy to find the formula needed in the present situation.  The following is a user's guide of \cite{HKN1} specialized to the case of the tube in a line bundle --- this is a revision of  second author's unpublished notes in 1999 that had been referred by Mabuchi \cite{Mab}.

Let $(X,\calL)$ be a polarized manifold with real analytic hermitian metric.  Then the unit disc bundle in the dual bundle $\calL^*$,
$$ 
\Omega=\{v\in \calL^*: \|v\|<1\},
$$ 
is a strictly pseudoconvex domain with a real analytic defining function $\rho(v,\ol v)=-\log\|v\|^2$, which is positive inside. Take a local holomorphic section  $\varphi$ of $\calL^*$ and set $h(z,\ol z)=\|\varphi(z)\|^2$. Then in the local coordinates $(\lambda,z)=(\lambda,z^1,\dots, z^n)$, given by $v=\lambda \varphi(z)$, one gets
$$
\rho(v,\ol v)=\log \lambda+\log\ol\lambda+\log h(z,\ol z).
$$
On the boundary $M=\pa\Omega$, we define contact form $\theta=-\sqrt{-1}\pa\rho|_M$ so that $d\theta$ agrees with the pullback of the K\"ahler from $\omega=\sqrt{-1}\pa\ol\pa\rho$.

Let $S(v,\ol v)$ be the Szeg\"o kernel of $M$ with respect to the $L^2$-inner product given by the volume form $\theta\wedge (d\theta)^n$. We  know from \cite{BS} that the Szeg\"o kernel (on the diagonal) is written as the Laplace integral
$$
S(v,\ol v)=\int_0^\infty e^{-t\rho(v,\ol v)} b(z,t)dt
$$
of a classical symbol $b(z,t)\in S^n(X\times\bR_+)$ with asymptotic expansion
$$
b(z,t)\sim \sum_{j=0}^{\infty}b_j(z)t^{n-j-1}\quad\text{as }t\to\infty.
$$ 
The Bergman kernel for  $H^0(X,\calL^m)$ is then given by the Fourier coefficient on the fibers 
$$
B_m(z)=\frac1{2\pi }\int_0^{2\pi} e^{-\sqrt{-1}m\phi}S(e^{i\phi}v,\ol v)d\phi.
$$
Thus noting  $\rho(e^{\sqrt{-1}\phi}v,\ol v)=-\sqrt{-1}\phi$ for small $\phi$, we obtain
$$
B_m(z)\sim b(z,m)\quad\text{as }m\to\infty
$$
by  the method of stationary phase as in \cite{Ze}. Therefore $a_j(z)=b_j(z)$ and we are reduced to compute the symbol $b(z,t)$.

 For a point $p\in X$, we take Bochner's coordinates $z$ around $p$ in which the K\"ahler potential is written in the form
\beq\label{Kpotential}
\log h=|z|^2+H(z,\ol z),\quad
H(z,\ol z)=\sum_{|\alpha|,|\beta|\ge2}a_{\alpha\ol\beta}z^\alpha\ol z^\beta.
\eeq
Then, setting $z_0=\log\lambda$, we may write  $M$  as
$$
\rho=z_0+\ol z_0+|z|^2+H(z,\ol z).
$$
Using
$\omega_g^n=c \det(g_{j\ol k})dz\wedge d\ol z$, where $c$ is a constant, we have
$$
\begin{aligned}
d\rho\wedge \theta\wedge(d\theta)^n
&= dz_0\wedge d\ol z_0\wedge\pi^*\omega_g^n \\
&=c \det(g_{j\ol
k})dz_0\wedge d\ol z_0\wedge dz\wedge d\ol z.
\end{aligned}
$$ 
Therefore the integration with respect to $\theta\wedge(d\theta)^n$ corresponds to the $\delta$-function $c(\det g)\,\delta(\rho)$ in the coordinates $(z_0,z)$. Note that $\det g=\det(I_n+\pa\ol\pa H)$ and the complexification is given by $\det g(z,\ol z)=\det(I_n+\pa_z\ol\pa_z H(z,\ol z))$. Let $(t,\zeta)=(t,\zeta^1,\dots,\zeta^n)\in \bC^{n+1}$ be the fiber coordinate of $T^*\bC^{n+1}$ with respect to the coframe  $dz^0,dz^1,\dots,dz^n$.  Substituting $\zeta/t$ into $\ol z$, we define
$$
A(z;\zeta,t)=(\det g)(z,\zeta/t)\exp[-H(z,\zeta/t)t].
$$
Then $ A(z;\zeta,t)$ has Laurent series expansion in $t$:
$$
A(z;\zeta,t)=1+\sum_{j=1}^\infty P_j(z,\zeta)t^{-j},
$$
where $P_j(z,\zeta)$ is holomorphic in $z$ (near $z=0$) and polynomial in $\zeta$.  By replacing $\zeta$ by the partial derivative $\pa_z$, we may define a differential operator $A(z;\pa_z, t)$ in $z$ with a parameter $t$. The (formal) adjoint of $A$ is defined by 
$$
A^*=1+\sum_{j=1}^\infty P_j^*(z,\pa_z)(-t)^{-j},
$$
where $ P_j^*(z,\pa_z)$ is the formal adjoint of $P_j(z,\pa_z)$ with respect to the standard metric on $\bC^n$; see the paragraph below Theorem \ref{A-thm} for the reason why we replace $t$ by $-t$.
The inverse of $A^*$ is given by the Neumann series
$$
(A^*)^{-1}=\sum_{l=0}^\infty\Big(-\sum_{j=1}^\infty P_j^*(z,\pa_z)(-t)^{-j}\Big)^l,
$$
which is well-defined as a  Laurent series  of $t$ with coefficients in the ring of differential operators in $z$. If we write
$$
(A^*)^{-1}=\sum_{j=0}^\infty Q_j(z,\pa_z)t^{-j},
$$
then  we have
$$
\begin{aligned}
Q_1&=P_1^*,\\ 
Q_2&=-P_2^*+(P_1^*)^2,
\\
Q_3&=P_3^*-P_1^*P_2^*-P_2^* P_1^*+(P_1^*)^3
\end{aligned}
$$
and so on.
We now take the constant term of $Q_j(z,\pa_z)$ at $z=0$, which we denote by $Q_j(0)$.  Then we obtain a Laurent series  in $t$
$$
(A^*)^{-1}(0;0,t)=\sum_{j=0}^\infty Q_j(0)t^{-j}.
$$ 
From this construction, it is clear that $Q_j(0)$ is a local invariant of $H$ in the sense of \S2.

\begin{theorem}\label{A-thm}  Take Bochner's coordinates around $p$ and write the K\"ahler potential as \eqref{Kpotential}. Then, at $p$, one has
\beq\label{bAeq}
b(p,t)=(A^*)^{-1}(0;0,t)t^n.
\eeq
In particular the coefficients in expansion of $B_m(p)$ are given by $a_j(p)=Q_j(0)$.
\end{theorem}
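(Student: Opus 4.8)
The plan is to reduce Theorem~\ref{A-thm} to a computation of the Boutet de Monvel--Sj\"ostrand symbol $b(z,t)$, and then to express this symbol for the curved tube $\Omega$ in terms of the symbol of the flat model tube by means of an explicit infinite-order microdifferential operator, following \cite{HKN1} and \cite{BdM}. Recall from \cite{BS} that on the diagonal $S(v,\ol v)=\int_0^\infty e^{-t\rho(v,\ol v)}b(z,t)\,dt$ with $b$ a classical symbol whose leading term is a multiple of $t^n$; after polarization $S$ extends to a kernel $S(v,\ol w)=\int_0^\infty e^{-t\rho(v,\ol w)}b(z,t)\,dt$, holomorphic in $v$ and antiholomorphic in $w$, where $\rho(v,\ol w)=v_0+\ol w_0+\log h(v,\ol w)$ is the polarization of $\rho$. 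The content of the theorem is the identification of $b$ at a point $p$ in terms of the K\"ahler potential written in Bochner's coordinates.

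First I would fix Bochner's coordinates centered at $p$, so that $\log h=|z|^2+H(z,\ol z)$ with $H$ of bidegree $\ge(2,2)$ as in \eqref{Kpotential}; this is the normalization in which the osculating tube at $p$ is the flat Siegel domain $\Omega_0=\{z_0+\ol z_0+|z|^2>0\}$, whose Szeg\"o kernel (for the model volume) has symbol a multiple of $t^n$. Now write $e^{-t\rho}=e^{-tH(z,\ol z)}e^{-t\rho_0}$ with $\rho_0=z_0+\ol z_0+|z|^2$, and recall from \S\ref{line-bundle-sec} that the volume $\theta\wedge(d\theta)^n$ corresponds, in the coordinates $(z_0,z)$, to the weight $c\,(\det g)(z,\ol z)\,\delta(\rho)$ with $\det g=\det(I_n+\pa_z\ol\pa_z H)$. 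The key algebraic point is that, against $e^{-t\rho_0}$, multiplication by the function $(\det g)(z,\ol z)\,e^{-tH(z,\ol z)}$ is realized by a differential operator in $z$ obtained by substituting $\ol z=\zeta/t$ and then reading $\zeta$ as $\pa_z$: indeed $\pa_{z^j}e^{-t\rho_0}=-t\,\ol z^j\,e^{-t\rho_0}$, so --- after ordering the $z$'s and $\pa_z$'s, which is exactly what the passage to the formal adjoint and the replacement $t\mapsto-t$ in \S\ref{line-bundle-sec} record --- one is led to the operator $A(z;\pa_z,t)$ with symbol $A(z;\zeta,t)=(\det g)(z,\zeta/t)\exp[-tH(z,\zeta/t)]$. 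Since $H$ has bidegree $\ge(2,2)$ one has $tH(z,\zeta/t)=O(t^{-1})$, so $A=1+\sum_{j\ge1}P_j(z,\zeta)t^{-j}$ is a genuine Laurent series and $A^*=1+O(t^{-1})$ is invertible in the calculus.

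Granting these ingredients, the reproducing property of the Szeg\"o kernel, transported through the $\delta(\rho)$-weight identity above, becomes an algebraic equation $A^*b=b_0$ at the level of symbols, where $b_0$ denotes the model symbol, a multiple of $t^n$; hence $b=(A^*)^{-1}b_0$. Evaluating at $p$, where $z=0$: since $b_0$ is independent of $z$, in $(A^*)^{-1}=\sum_{j\ge0}Q_j(z,\pa_z)t^{-j}$ every $\pa_z$ that would hit $b_0$ contributes nothing, and only the constant term of each $Q_j$ at $z=0$ survives; this gives $b(p,t)=(A^*)^{-1}(0;0,t)\cdot b_0$. Normalizing $\theta\wedge(d\theta)^n$ (equivalently the constant $c$) so that $b_0=t^n$ yields \eqref{bAeq}. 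Finally, expanding $(A^*)^{-1}(0;0,t)=\sum_j Q_j(0)t^{-j}$ and comparing with $b(z,t)\sim\sum_j b_j(z)t^{n-j}$ gives $b_j(p)=Q_j(0)$, and $a_j(p)=b_j(p)$ by the identity $B_m(p)=b(p,m)+O(m^{-\infty})$ already established in \S\ref{line-bundle-sec}.

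The main obstacle is the step just above: rigorously justifying the microdifferential identity $A^*b=b_0$, i.e., that the Szeg\"o kernel of the curved tube is \emph{literally} $(A^*)^{-1}$ applied to that of the flat model. This requires (i) the symbol calculus of Boutet de Monvel's infinite-order microdifferential operators --- composition, formal adjoint, and invertibility of operators of the form $1+O(t^{-1})$ --- and (ii) a careful analysis of how the reproducing property of $S$, which pairs the holomorphic variable $v$ against a Hardy-space function of $w$, converts into the stated equation on symbols; it is precisely the asymmetry between $v$ and $\ol w$, together with $\pa_{\ol w_0}e^{-t\rho}=-t\,e^{-t\rho}$, that forces the appearance of the \emph{adjoint} $A^*$ and the sign change $t\mapsto -t$. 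Both points are where the machinery of \cite{HKN1} is essential; once they are in place, the remaining steps (Bochner normal form, the Laurent expansions, and extracting the constant terms $Q_j(0)$ from the Neumann series) are routine bookkeeping.
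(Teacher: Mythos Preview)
Your proposal is essentially correct and follows the approach of \cite{HKN1}, which is exactly what the paper does: the paper gives no self-contained proof of this theorem but simply states that it is a special case of \cite[Lemma~2.2]{HKN1}, adding only the remark that negative powers of $t$ are to be read as $\pa_{z^0}^{-j}$ in the microdifferential calculus (explaining the $t\mapsto -t$ in the adjoint). Your outline---Bochner normalization, the identity $\pa_{z^j}e^{-t\rho_0}=-t\,\ol z^j e^{-t\rho_0}$ turning multiplication by the weight $(\det g)e^{-tH}$ into the operator $A$, the reproducing property yielding $A^*b=b_0$, and then reading off $Q_j(0)$---is precisely the skeleton of that lemma, and you correctly flag that the rigorous justification of $A^*b=b_0$ (the symbol calculus and the passage from the reproducing identity to the operator equation) is where the actual work of \cite{HKN1} lies.
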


This theorem is a special case of \cite[Lemma 2.2]{HKN1}. The calculus above is done in the ring of microdifferential  operators, in which the negative powers of $t$ are regarded as microdifferential operators of negative order  $\pa_{z^0}^{-j}$; this is why we replace $t$ by $-t$ when we take the formal adjoint. See also \cite{HKN2} for more detailed technique of the calculation.

As an example  we compute the linear terms in $a_j$ and confirm \eqref{ajLu}. If we take the first variation of the determinant, we get
$$
\det g=1+\Delta H+O(H^2),
$$
where $\Delta=\pa_{z^a}\pa_{\ol z^a}$ is the flat K\"ahler Laplacian. Thus 
$$
\begin{aligned}
A(z;\pa_z,t)&=\det g(z,t^{-1}\pa_z)\cdot \exp (-H(z,t^{-1}\pa_z)t)\\
&=1+\Delta H(z,t^{-1}\pa_z)-H(z,t^{-1}\pa_z)t+O(H^2).
\end{aligned}
$$
The formal adjoint is given by
$$
A^*=1+H(z,-t^{-1}\pa_z)t+\sum_{j=2}^\infty \frac{j}{(j+1)!}\Delta^{j+1}H(z,-t^{-1}\pa_z)t^{-j}+O(H^2).
$$
Here $-t^{-1}\pa_z$ is substituted after applying powers of  $\Delta$ to $H$. Thus the inverse is
$$
(A^*)^{-1}(0;0;t)=1-H(0,0)t-\sum_{j=2}^\infty \frac{j}{(j+1)!}\Delta^{j+1}H(0,0)t^{-j}+O(H^2).
$$
Using $H(0,0)=0$ and $\Delta^{j+1}H(0,0)=-\Delta^{j-1}_g S_g(p)+O(R^2)$, we get
$$
(A^*)^{-1}(0;0,t)=1+\sum_{j=1}^\infty \frac{j}{(j+1)!}\Delta_g^{j-1}S_g(p)t^{-j}+O(R^2),
$$
which is equivalent to \eqref{ajLu}.

It is not difficult to write $a_j$ in terms of the derivatives of $H$. But rewriting invariants of $H$ in terms of the covariant derivatives of the curvature requires extra effort.  Fortunately, Xu \cite{Xu} has give an efficient way to do it by using directed graphs associated to K\"ahler invariants.  Combining the work of Xu with the algorithmic proof of Theorem \ref{A-thm} provides an effective method of expressing $a_j(g)$ as a divergence plus a Chern polynomial.%

\subsection{$Q$-curvature on CR manifolds}
We here give an application of the main theorem to the $Q$-curvature in CR geometry.  We start by quickly recalling the basic properties of the $Q$-curvature 
by following \cite{FH}. Let $M$ be a  strictly pseudoconvex CR manifold of dimension $2n+1$; we denote the holomorphic tangent bundle by $T^{1,0}M$, which we assume to be integrable.  With a choice of contact form $\theta$, we can define the Levi metric on $T^{1,0}M$ by the two form $d\theta$.  In analogy with the hermitian connection on complex manifolds, one can define a canonical connection $\nabla$ on $TM$, called Tanaka-Webster connection, that preserves the subbundle  $T^{1,0}M$  and the Levi metric.  The $Q$-curvature of CR manifold $Q_\theta$ with respect to $\theta$ is a local invariant of the Levi metric $d\theta$, which can be expressed in terms of the curvature, torsion and their covariant derivatives of $\nabla$.  While $Q_\theta$ is not a CR invariant, there is a transformation law under the scaling of the contact form $\wh\theta=e^\Upsilon \theta$:
$$
e^{(n+1)\Upsilon}Q_{\wh\theta}=Q_{\theta}+P_\theta\upsilon,
$$
where $P_\theta$ is a self-adjoint CR invariant operator of order $2n+2$ such that $P_\theta1=0$.  Here the CR invariant operator means that $ e^{(n+1)\Upsilon}P_{\wh\theta}=P_\theta$. In particular, we see that the total $Q$-curvature
$$
\int_M Q_\theta \theta\wedge (d\theta)^n
$$
is independent of $\theta$ and defines a global invariant of CR manifolds. In dimension $3$, we have 
$$
Q_\theta=\frac23(\Delta_b S-2\Im \nabla_{\ca\cb}A_{ab}),
$$
which is a divergence and integrates to zero on compact manifolds. However, for higher dimensions, it is not easy to write down $Q_\theta$ explicitly.

On the other hand, if an open set of $M$ is embedded in $\bC^{n+1}$, then we can choose $\theta$ so that $ Q_\theta=0$ on that set.   Then, for any $\wh\theta=e^\Upsilon\theta$, the transformation law gives
$$
Q_{\wh\theta}=P_{\wh\theta}\Upsilon.
$$
The right-hand side  can be written as a divergence of one form depending on $\Upsilon$, but it is not clear the one form is a local invariant of $\wh\theta$.  A natural guess is the following:

\begin{conjecture}  On CR manifolds, the $Q$-curvature is a divergence, i.e., there exist form-valued local invariants $T_a$ and $T_\ca$ of $\theta$ such that $Q_\theta=\nabla_\ca T_a+\nabla_a T_\ca$.
\end{conjecture}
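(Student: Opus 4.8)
\emph{Strategy.} The plan is to run, for CR manifolds, the exact analogue of the reduction that proves the Main Theorem, with the invariance of the total $Q$-curvature $\int_M Q_\theta\,\theta\wedge(d\theta)^n$ under the choice of $\theta$ playing the role of the invariance of $\int_X P(g)\,dV_g$ under change of K\"ahler class. Just as the Main Theorem strengthens ``the integral depends only on the class'' to a pointwise divergence decomposition, I want to strengthen the known invariance of the total $Q$-curvature (which only forces $\int_M Q_\theta=0$) to the pointwise statement $Q_\theta=\nabla_\ca T_a+\nabla_a T_\ca$. Concretely, I would first pass to the Fefferman circle bundle $\pi\colon\mc{F}\to M$, whose conformal class $[g_\theta]$ of Lorentzian metrics is a CR invariant; by the Fefferman--Hirachi correspondence \cite{FH} the conformal $Q$-curvature $Q^c$ of the $(2n+2)$-dimensional Fefferman metric pushes forward to the CR $Q$-curvature, $\pi_*(Q^c\,dV_{g_\theta})=c_n\,Q_\theta\,\theta\wedge(d\theta)^n$ with $c_n\ne0$. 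This converts the assertion into a statement about a conformal invariant in even dimension, where the total conformal $Q$-curvature $\int_{\mc{F}}Q^c\,dV$ is invariant under \emph{all} conformal rescalings.

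\emph{Main steps.} First I would apply the first author's conformal decomposition recalled in \S\ref{conformal-intro}: in signature-independent formal form this gives $Q^c=W(g_\theta)+\nabla_a T_a(g_\theta)+c\cdot\operatorname{Pfaff}(R)$, where $W$ is a local conformal invariant of geometric weight $n+1$ and $T_a$ a divergence term. (Checking that the formal, invariant-theoretic proof of that result is insensitive to the Lorentzian signature is a routine but necessary preliminary.) Second, the divergence $\nabla_a T_a$ is $S^1$-invariant on $\mc{F}$, and its vertical average pushes forward to a genuine CR divergence $\nabla_\ca T_a+\nabla_a T_\ca$ on $M$, by integrating along the circle fibre and re-expressing vertical derivatives through the Tanaka--Webster connection. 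Third, the Pfaffian term integrates out: since $\mc{F}$ is a circle bundle over the odd-dimensional $M$ one has $\chi(\mc{F})=0$, and more strongly the fibre pushforward of the Gauss--Bonnet integrand of an $S^1$-bundle is exact (the tangent bundle of $M^{2n+1}$ has odd rank, so its Euler form is a divergence), whence $\pi_*(\operatorname{Pfaff}(R)\,dV)$ is a CR divergence. The remaining task is the term $W$: here I would use Fefferman's classification of local conformal invariants of Fefferman metrics --- they are $S^1$-invariant and arise from contractions of CR (Weyl and torsion) data in which the fibre direction enters trivially --- to show that the vertical integral $\pi_*(W\,dV)$ is again a CR divergence.

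\emph{Main obstacle.} The decisive difficulty is precisely this last step: showing that $\pi_*(W\,dV)$ is a divergence. Unlike the K\"ahler case, where \emph{no} nontrivial local invariants of the class exist, CR geometry admits many local invariants $W$, so one cannot simply discard them; one must prove that each survives the fibre integration only as a divergence. I expect this to require an inductive divergence-extraction argument in the spirit of \S\ref{dgt0}, now carried out directly on CR invariants built from curvature \emph{and} torsion $A_{ab}$, together with the transformation/variation apparatus reducing everything to a CR analogue of Proposition~\ref{main.prop}. The genuinely new algebraic feature, absent from the K\"ahler setting, is that the Tanaka--Webster connection has nonzero torsion and a Reeb derivative $\nabla_0$, so commutators of covariant derivatives produce both curvature and torsion terms; the covariant derivatives of the data are therefore not symmetric even to leading order (contrast the remark in \S\ref{conformal-intro}). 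Controlling these torsion-induced nonlinear corrections --- either by enlarging the ``acceptable'' classes so that torsion factors are admitted and re-running the five-parameter induction, or by absorbing them on the Fefferman side --- is where the bulk of the work, and the real content beyond the torsion-free (Sasakian) case, will lie.
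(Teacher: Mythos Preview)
The statement you are attempting to prove is a \emph{Conjecture} in the paper, not a theorem; the paper does not prove it in general and explicitly leaves it open. What the paper does prove is the special case recorded immediately after: when $M$ is the circle bundle of a negative line bundle over a K\"ahler base $X$ (more generally, when $M$ is Sasakian), $Q_\theta$ is a local K\"ahler invariant of the base of geometric weight $n+1$, its integral depends only on the K\"ahler class (by Matsumoto's deformation invariance of the total $Q$-curvature), and the Main Theorem applies directly. The Chern polynomial term $\Ch_{n+1}(g)$ that appears has degree $n+1>n=\dim X$ and so vanishes identically, leaving only the divergence. No Fefferman bundle, no conformal decomposition, and no torsion enter that argument.

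Your proposal is an attack on the full conjecture, not a reproduction of anything in the paper. The route through the Fefferman bundle is reasonable, but the obstacle you flag in your last paragraph is not a technicality to be cleaned up --- it is the conjecture. Specifically: the conformal decomposition $Q^c=W+\nabla_aT_a+c\cdot\operatorname{Pfaff}$ on $\mc{F}$ produces, after fibre integration, a local CR invariant coming from $W$, and there is no mechanism in the paper (or in the cited literature) that forces a local CR invariant to be a divergence. In the K\"ahler problem this step was vacuous precisely because there are no nontrivial local invariants of a K\"ahler class; in CR geometry there are many (Chern--Moser invariants, for instance), so nothing analogous holds. Your suggestion to ``re-run the five-parameter induction'' on CR invariants with torsion and Reeb derivatives is a proposal for a new theorem, not an application of an existing one.

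There are also two subsidiary gaps you should be aware of. First, the conformal decomposition theorem of \cite{A1}--\cite{A5} is proved for Riemannian metrics; the Fefferman metric is Lorentzian, and while much of that machinery is formal, the passage is not automatic (your parenthetical ``routine but necessary'' undersells this). Second, even the divergence term $\nabla_aT_a(g_\theta)$ on $\mc{F}$ does not obviously push forward to a divergence of the specific CR form $\nabla_\ca T_a+\nabla_aT_\ca$ with $T_a,T_\ca$ local invariants of $\theta$: the Fefferman connection mixes horizontal and Reeb directions, and a Riemannian divergence on $\mc{F}$ will in general produce $\nabla_0$-terms on $M$ that must then be reabsorbed.
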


Here $T_a$ and $T_\ca$ are respectively sections of $(T^{1,0}M)^*$ and $(T^{0,1}M)^*$; the contraction is taken with respect to the Levi metric for $\theta$.

We confirm the conjecture in the special case when $M=\pa\Omega$, the boundary of the unit disc bundle in a negative line bundle $\calL^*$ used in \S\ref{line-bundle-sec}. For the standard contact form $\theta=-i\pa\rho|_M$, the $Q$-curvature becomes $S^1$-invariant and  $Q_\theta$ can be seen as a function on the base manifold $X$, which we denote by $Q_\omega$.  Moreover, by the construction, $Q_\omega$ is shown to be a local invariant of the K\"ahler form $\omega=d\theta$.

\begin{theorem}
There exist one form valued local K\"ahler invariants $T_a$ and $T_\ca$ such that
$Q_\omega=\nabla_\ca T_a+\nabla_a T_\ca$.
\end{theorem}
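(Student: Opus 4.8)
The plan is to realize $Q_\omega$ as a local K\"ahler invariant satisfying the hypothesis of the Main Theorem, and then to observe that the Chern polynomial it produces must vanish for dimensional reasons. First I would record the weight: since $Q_\theta$ is a local invariant of the Levi metric $d\theta=\omega$ with the transformation law $e^{(n+1)\Upsilon}Q_{\wh\theta}=Q_\theta+P_\theta\Upsilon$ and $P_\theta 1=0$, choosing $\Upsilon=\log\lambda$ constant gives $Q_{\lambda\omega}=\lambda^{-(n+1)}Q_\omega$. Hence $Q_\omega$ is a local K\"ahler invariant of homogeneous geometric weight $\gw=n+1$ on $n$-dimensional K\"ahler manifolds, and by the first main theorem of invariant theory it has a formal expression as a linear combination of complete contractions \eqref{contraction1}, all of geometric weight $n+1$. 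Note that the ambient dimension satisfies $n=\gw-1$, which is exactly the borderline case in which the Main Theorem still applies.

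Next I would verify the integral hypothesis. For a polarized $(X,\calL,h)$ with $\omega$ the Chern curvature of $(\calL,h)$, the $S^1$-invariance of $Q_\theta$ and the identity $\theta\wedge(d\theta)^n=\theta\wedge\pi^*\omega^n$ give
\[
\int_M Q_\theta\,\theta\wedge(d\theta)^n=c\int_X Q_\omega\,\omega^n
\]
for a nonzero universal constant $c$. The left-hand side is the total $Q$-curvature, which is a CR invariant; moreover the CR structure of the boundary circle bundle $M$ in $\calL^*$ is determined by $(X,\calL)$ alone --- the radial rescaling identifying the unit circle bundles of two Hermitian metrics on $\calL^*$ is a CR diffeomorphism --- while $h$ affects only the contact form $\theta$. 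Thus $\int_X Q_\omega\,\omega^n$ does not depend on $h$, i.e.\ it is constant as $\omega$ varies within the K\"ahler class $c_1(\calL)$. Twisting $\calL$ by a flat line bundle, together with the weight-$(n+1)$ homogeneity and the density of rational K\"ahler classes, then promotes this to the full hypothesis: $\int_X Q_\omega\,\omega^n$ depends only on the K\"ahler class, for every compact K\"ahler $(X,g)$ of dimension $n$. (Alternatively, it is enough to check the weaker statement that the variation $L^\sigma(\phi)$ of $Q_\omega$ integrates to zero over the flat-torus-with-bump family \eqref{g-family}, which is immediate since that family may be taken to consist of polarized metrics in a fixed class; this is all that the proof of Proposition \ref{mainprop} actually uses.)

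With the hypothesis in hand, the Main Theorem supplies a Chern polynomial $\Ch(g)$ and one-form valued local K\"ahler invariants $T_a(g),T_\ca(g)$ with
\[
Q_\omega=\Ch(g)+\nabla_\ca T_a(g)+\nabla_a T_\ca(g).
\]
Since $\Ch(g)$ has geometric weight $n+1$, it is a Chern polynomial of homogeneous degree $n+1$, and such a polynomial vanishes identically on any manifold of dimension $<n+1$, in particular on $X$. Therefore $Q_\omega=\nabla_\ca T_a(g)+\nabla_a T_\ca(g)$ on $X$, which is the assertion.

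The main obstacle will be the second step. Relating $\int_X Q_\omega\,\omega^n$ to the CR-invariant total $Q$-curvature --- and in particular tracking that only the contact form, not the CR structure of the circle bundle, depends on the Hermitian metric --- requires care, as does the passage from the polarized case to arbitrary K\"ahler classes (via flat twists, homogeneity and density, or else bypassed through the flat-torus reduction internal to Proposition \ref{mainprop}). One should also confirm that the $Q$-curvature of the circle bundle, once written out via the construction of \S\ref{line-bundle-sec}, is genuinely a dimension-free formal Weyl contraction of geometric weight $n+1$, so that it may legitimately be fed into the Main Theorem; this is automatic from the first main theorem of invariant theory. By contrast, once these points are settled, the vanishing of the Chern term --- and hence the conclusion --- is forced by the weight count.
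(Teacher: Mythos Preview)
Your overall architecture matches the paper's: compute the geometric weight as $n+1$, verify that $\int_X Q_\omega\,\omega^n$ is constant along K\"ahler deformations, apply the Main Theorem, and kill the Chern polynomial by the dimension count. The weight computation and the final two steps are fine.

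The gap is in the second step. Your claim that ``the CR structure of the boundary circle bundle $M$ in $\calL^*$ is determined by $(X,\calL)$ alone'' is not correct, and the radial rescaling $v\mapsto e^{-f(\pi(v))/2}v$ is \emph{not} a CR diffeomorphism when $f$ is a general smooth real function: it is not the restriction of a biholomorphism of $\calL^*$, since $e^{-f/2}$ is not holomorphic in the base variables. Changing $h$ to $e^f h$ genuinely moves the real hypersurface $M\subset\calL^*$ and changes its induced CR structure (the Levi form, for instance, becomes the new K\"ahler form $\omega+\sqrt{-1}\,\partial\ol\partial f$). So the invariance of the total $Q$-curvature under change of contact form on a \emph{fixed} CR manifold does not by itself give you constancy of $\int_X Q_\omega\,\omega^n$ along the family $h_\epsilon=e^{\epsilon f}h$.

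What the paper uses instead is a deformation result: by Matsumoto, the total $Q$-curvature is invariant under smooth deformations of integrable CR structures. Applying this to the family $M_\epsilon=\{\|v\|_{h_\epsilon}=1\}$ gives the constancy you need. Your parenthetical ``alternative'' via the flat-torus family does not avoid the issue: even on the torus, the circle bundles $M_\epsilon$ have varying CR structures, and you still need a reason why their total $Q$-curvatures coincide. Once you replace the radial-rescaling argument with the Matsumoto citation (or an equivalent deformation-invariance statement), the rest of your proof goes through and coincides with the paper's.
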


\begin{proof}
By the scaling of contact from by constant $\wh\theta=e^c\theta$, we have $Q_{\wh\theta}=e^{-(n+1)c}Q_{\theta}$.  Thus $ Q_{\omega}$ has geometric weight $n+1$. We next show that the integral
$$
\int_M Q_\theta \theta\wedge(d\theta)^n=2 \pi\int_X Q_\omega\omega^n
$$
depends only on the K\"ahler class. For an $f\in C^\infty(X)$, we define a family of fiber metrics on $\calL^*$ by $h_\epsilon=e^{\epsilon f}h$ and set $M_\epsilon$ to be the unit circle bundle for $h_\epsilon$.  Let $Q_\epsilon$ be the $Q$-curvature for $M_\epsilon$ with the standard contact form $\theta_\epsilon$ defined from $h_\epsilon$. Then the curvature of $h_\epsilon$ is given by
\begin{equation}\label{pert-omega}
\omega_\epsilon=\omega+\epsilon\sqrt{-1}\pa\overline{\pa}f,
\end{equation}
which is positive if $\epsilon$ is small. By Matsumoto \cite{Mat}, we know that the total $Q$-curvature is invariant under the deformation of integrable CR structures. Thus the integral of $Q_\epsilon$ is independent of $\epsilon$ and hence the integral of $Q_\omega$ depends only on the K\"ahler class. Therefore $Q_\omega$ satisfies the assumption of the main theorem and we obtain a decomposition
$$
Q_\omega=\Ch_{n+1}(g)+\nabla_\ca T_a+\nabla_a T_\ca,
$$
where $\Ch_{n+1}(g)$ is a Chern polynomial of degree $n+1$.  As $X$ has dimension $n$,  $\Ch_{n+1}(g)$ vanishes identically and we get the desired expression of $Q_\omega$.
\end{proof}

This theorem can be generalized to Sasakian manifolds.  Recall that a Sasakian manifold is a CR manifold with a contact form $\theta$ for which the Tanaka-Webster torsion vanishes.  For  such $\theta$, the expression of $Q_\theta$ in terms Tanaka-Webster connection and its covariant derivatives agrees with the one for curvature for a K\"ahler manifold and its derivatives; see \cite{Web}.   In particular, we see that the total $Q$-curvature of a compact Sasakian manifold vanishes.

\subsection*{Acknowledgments}
SA was partially supported by NSERC grants 488916 and 489103, and Clay and Sloan fellowships during the earlier stages of this project. KH was partially supported by JSPS KAKENHI grant 60218790. Part of the work was completed during visits by both authors to the ANU in Australia, BIRS Canada, and the  CRM, UAB Barcelona, and by visits of KH to the University of Toronto and SA to the University of Tokyo. We thank all these institutions for their hospitality.


\begin{thebibliography}{99}

\bibitem{A1} S. Alexakis, {\em On the Decomposition of Global Conformal Invariants I},
Ann. of Math. {\bf 170} (2009), 1241--1306.

\bibitem{A2} S. Alexakis, {\em On the Decomposition of Global Conformal Invariants II},
Adv.  Math. {\bf 206} (2006),  466--502.

\bibitem{A3}S. Alexakis, The Decomposition of Global Conformal Invariants,
Ann. of Math. Studies, Princeton Univ. Press, 2012.

\bibitem{A4} S. Alexakis, {\em The Decomposition of Global Conformal Invariants: Some Technical Proofs I}, SIGMA  {\bf 7} (2011), Paper 019.


\bibitem{A5} S. Alexakis, {\em The Decomposition of Global Conformal Invariants: Some Technical Proofs II}, Pacific J. Math. {\bf 260} (2012),  1--88. 

\bibitem{BEG} T.N. Bailey, M.G. Eastwood and C.R. Graham, {\em Invariant theory for conformal and CR geometry}, Ann. of Math. {\bf 139} (1994) 491--552.

\bibitem{BBS}
R. Berman, B. Berndtsson and J. Sj\"ostrand,
{\em  A direct approach to Bergman kernel asymptotics for positive line bundles}, Ark. Mat. {\bf 46} (2008), 197--217.

\bibitem{Bouche} 
T. Bouche, {\em Convergence de la m\'etrique de Fubini-Study d'un fibr\'e lin\'eaire positif},
Ann. Inst. Fourier {\bf 40} (1990), 117--130. 

\bibitem{BdM} 
L. Boutet de Monvel, 
{\em  Compl\'ement sur le noyau de Bergman}, S\'eminaire EDP, \'Ecole Polytech.  {\rm Expos\'e n$^\circ$ XX},  1985--86.

\bibitem{BS} L.~Boutet de Monvel, J.~Sj\"ostrand  
{\em Sur la singularit\'e de noyaux  des Bergman et de Szeg\"o,}
Soc.~Math.~de France, Ast\'erisque  {\bf 34--35} (1976),123--164.

\bibitem{BO} T. Branson, B. {\O}rsted,
{\em Conformal geometry and global invariants},
Differential Geom. Appl. {\bf 1} (1991),  279--308.

\bibitem{Ca}
D. Catlin, {\em The Bergman kernel and a theorem of Tian,} Analysis and geometry in several complex variables (Katata, 1997), Trends Math.,  Birkh\"user, 1999, 1--23.

\bibitem{Do}S.~K. Donaldson, {\em Scalar curvature and projective embeddings, I},
J. Differential Geom. {\bf 59} (2001), 479--522.

\bibitem{En}
M. Englis, {\em The asymptotics of a Laplace integral on a K\"ahler manifold}, Trans. Amer. Math. Soc. {\bf 528} (2000), 1--39.

\bibitem{Fe}
C. Fefferman, {\em Parabolic invariant theory in complex analysis}, Adv.  Math. {\bf 31} (1979), 131--262.

\bibitem{FH}
C. Fefferman, K. Hirachi, {\em Ambient metric construction of $Q$-curvature in conformal and CR geometries}, Math. Res. Lett. {\bf 10} (2003), 819--832.

\bibitem{Gi}
P. Gilkey, Invariance Theory: The Heat Equation and the Atiyah-Singer Index Theorem,
CRC Press, 1994.

\bibitem{Hi4}
K. Hirachi, {\em Invariant theory of the Bergman kernel of strictly pseudoconvex domains}, Sugaku Expositions {\bf17} (2004), 151--169.

\bibitem{Hi}
K. Hirachi, {\em
Logarithmic singularity of the Szeg\"o kernel and a global invariant of strictly pseudoconvex domains}, Ann. of Math. {\bf 163} (2006), 499--515.

\bibitem{HKN1}
K. Hirachi, G. Komatsu and N. Nakazawa, {\em Two methods of determining local invariants in the Szeg\"o kernel}, Complex Geometry, Lect. Notes in Pure and Appl. Math. {\bf 143}, 77--96, Dekker, New York, 1993. 

\bibitem{HKN2}
K. Hirachi, G. Komatsu and N. Nakazawa, 
{\em CR invariants of weight five in the Bergman kernel}, Adv.  Math. {\bf 143} (1999), 185--250.

\bibitem{Ka} 
M.~Kashiwara,
{\em  Analyse micro-locale du noyau de Bergman},
S\'eminaire  Goul\-aouic-Schwartz, \'Ecole Polytech. 
{\rm Expos\'e n$^\circ$ VIII}, 1976--77.

\bibitem{Lo}
A. Loi, {\em
The Tian--Yau--Zelditch asymptotic expansion for real analytic K\"ahler metrics}, Int. J. Geom. Methods in Modern Phys. 1 (2004) 253-263.

\bibitem{Lu}
Z. Lu, {\em On the lower order terms of the asymptotic expansion of Tian-Yau-Zelditch}, Amer. J. Math. {\bf 122} (2000),  235--273. 

\bibitem{LT}
Z. Lu and G. Tian, {\em The log term of the Szeg\"o Kernel}, Duke Math. J. {\bf 125} (2004), 351--387.

\bibitem{Mab}
T. Mabuchi, {\em Extremal metrics and stabilities on polarized manifolds}, International Congress of Mathematicians, Vol. II, 813--826, Eur. Math. Soc., Z\"urich, 2006.

\bibitem{Mat} 
Y. Matsumoto,
{\em GJMS operators, Q-curvature, and obstruction tensor of partially integrable CR manifolds},  {\tt arXiv:1402.4110}

\bibitem{PR} 
T. Parker, S. Rosenberg, {\em Invariants of conformal Laplacians},
J. Differential Geom. {\bf 25} (1987), 199--222.

\bibitem{Ru}
W. Ruan, {\em Canonical coordinates and Bergman metrics}, Comm. Anal. Geom. {\bf 6} (1998), 589--631.

\bibitem{Sz}
G. Sz\'ekelyhidi,
{\em An Introduction to Extremal K\"ahler Metrics, Graduate Studies in Mathematics},  Amer. Math. Soc., 2014.

\bibitem{Ti}
G. Tian, {\em On a set of polarized K\"ahler metrics on algebraic manifolds}, J. Differential Geom. 
{\bf 32} (1990), 99--130.

\bibitem{Web}
S. M. Webster, {\em On the pseudo-conformal geometry of a K\"ahler manifold}, Math. Z. {\bf 157} (1977), 265--270. 



\bibitem{Weyl}
H. Weyl, The Classical Groups. Their Invariants and Representations, Princeton Univ. Press, 1939.

\bibitem{Xu}
H. Xu, {\em
A Closed Formula for the Asymptotic Expansion of the Bergman Kernel},
Comm. in Math. Phys. {\bf 314} (2012),  555--585.

\bibitem{Ya}
S.-T. Yau, {\em Nonlinear analysis in geometry}, L' {\'E}nseignement Math. {\bf 33} (1987), 109--158.

\bibitem{Ze}
S. Zelditch, {\em Szeg\"o kernel and a theorem of Tian}, Internat. Math. Res. Notices, {\bf 6} (1998), 317--331.

\end{thebibliography}
\end{document}